\newtheorem{theorem}{Theorem}[section]
\newtheorem{lemma}[theorem]{Lemma}
\newtheorem{proposition}[theorem]{Proposition}
\newtheorem{corollary}[theorem]{Corollary}
\newtheorem{exAux}[theorem]{Example}
\newtheorem{Def}[theorem]{Definition}
\newenvironment{definition}{\begin{Def} \rm}{\end{Def}}
\newtheorem{Note}[theorem]{Note}
\newenvironment{note}{\begin{Note} \rm}{\end{Note}}
\newtheorem{Problem}[theorem]{Problem}
\newenvironment{problem}{\begin{Problem} \rm}{\end{Problem}}
\newtheorem{Rem}[theorem]{Remark}
\newtheorem{Not}[theorem]{Notation}
\newtheorem{Conj}[theorem]{Conjecture}
\newtheorem{Ass}[theorem]{Assumption}
\newenvironment{proof}{\medskip\noindent{\bf Proof.\ }}{\qed\medskip}
\newenvironment{proofof}[1]{\medskip\noindent{\bf Proof  of {#1}.\ 
}}{\qed\medskip}
\newcommand{\qed}{\hfill\mbox{$\Box$\qquad\qquad}}
\newcommand{\F}{\mathbb{F}}
\newcommand{\Mat}{\text{\rm Mat}}
\newcommand{\vphi}{\varphi}
\renewcommand{\th}{\theta}
\newif\ifDRAFT
\begin{document}

\begin{center}
\LARGE\bf
Leonard pairs having LB-TD form
\end{center}

\begin{center}
\Large
Kazumasa Nomura
\end{center}

\bigskip

{
\begin{quote}
\begin{center}
{\bf Abstract}
\end{center}
Fix an algebraically closed field $\F$ and an integer $d \geq 3$.
Let $\Mat_{d+1}(\F)$ denote the $\F$-algebra consisting of the $(d+1) \times (d+1)$
matrices that have all entries in $\F$.
We consider a pair of diagonalizable matrices $A,A^*$ in $\Mat_{d+1}(\F)$,
each acts in an irreducible tridiagonal fashion on an eigenbasis for the other one.
Such a pair is called a Leonard pair in $\Mat_{d+1}(\F)$.
For a Leonard pair $A,A^*$ there is a nonzero scalar $q$ that is used to describe the eigenvalues
of $A$ and $A^*$.
In the present paper we find all Leonard pairs $A,A^*$ in $\Mat_{d+1}(\F)$
such that $A$ is lower bidiagonal with subdiagonal entries all $1$ and $A^*$
is irreducible tridiagonal,
under the assumption that $q$ is not a root of unity.
This gives a partial solution of a problem given by Paul Terwilliger.
\end{quote}
}

\section{Introduction}

Throughout the paper $\F$ denotes an algebraically closed field.
All scalars will be taken from $\F$.
Fix an integer $d \geq 0$ and a vector space $V$ over $\F$ with dimension $d+1$.
Let $\F^{d+1}$ denote the $\F$-vector space consisting of the column vectors
of length $d+1$ and $\Mat_{d+1}(\F)$ denote the $\F$-algebra consisting of
the $(d+1) \times (d+1)$ matrices.
The algebra $\Mat_{d+1}(\F)$ acts on $\F^{d+1}$ by left multiplication.

We begin by recalling the notion of a Leonard pair.
We use the following terms.
A square matrix is said to be {\em tridiagonal} whenever each nonzero
entry lies on either the diagonal, the subdiagonal, or the superdiagonal.
A tridiagonal matrix is said to be {\em irreducible} whenever
each entry on the subdiagonal is nonzero and each entry on the superdiagonal is nonzero.

\begin{definition}  {\rm \cite[Definition 1.1]{T:Leonard} }    \label{def:LP}   \samepage
\ifDRAFT {\rm def:LP}. \fi
By a {\em Leonard pair on $V$} we mean an ordered pair of linear transformations
$A : V \to V$ and $A^*: V \to V$ that satisfy (i) and (ii) below:
\begin{itemize}
\item[\rm (i)]
There exists a basis for $V$ with respect to which the matrix representing
$A$ is irreducible tridiagonal and the matrix representing $A^*$ is diagonal.
\item[\rm (ii)]
There exists a basis for $V$ with respect to which the matrix representing
$A^*$ is irreducible tridiagonal and the matrix representing $A$ is diagonal.
\end{itemize}
By a {\em Leonard pair in $\Mat_{d+1}(\F)$} we mean an ordered pair
$A,A^*$ in $\Mat_{d+1}(\F)$ that acts on $\F^{d+1}$ as a Leonard pair. 
\end{definition}

\begin{note}    \samepage
According to a common notational convention, $A^*$ denotes the
conjugate transpose of $A$.
We are not using this convention.
In a Leonard pair $A,A^*$ the matrices $A$ and $A^*$ are arbitrary subject to
the conditions (i) and (ii) above.
\end{note}

We refer the reader to
\cite{NT:span, NT:affine,T:Leonard,T:tworelations, T:LBUB, T:survey, TV}
for background on Leonard pairs.

A square matrix is said to be {\em lower bidiagonal} whenever each nonzero entry
lies on either the diagonal or the subdiagonal.
Paul Terwilliger gave the following problem:

\begin{problem} \cite[Problem 36.14]{T:survey}   \label{prob}
Find all Leonard pairs $A,A^*$ in $\Mat_{d+1}(\F)$ that satisfy the following conditions:
(i) $A$ is lower bidiagonal with subdiagonal entries all $1$;
(ii) $A^*$ is irreducible tridiagonal.
\end{problem}

The above problem is related to ``Leonard triples'' \cite{Cur,Hu},
``adjacent Leonard pairs'' \cite{Har}, and ``$q$-tetrahedron algebras'' \cite{IRT}.
In the present paper we give a partial solution of  Problem \ref{prob}.
We use the following terms:

\begin{definition}   \label{def:matLBTD}   \samepage
\ifDRAFT {\rm def:matLBTD}. \fi
An ordered pair of matrices $A,A^*$ in $\Mat_{d+1}(\F)$ is said to be {\em LB-TD}
whenever $A$ is lower bidiagonal with subdiagonal entries all $1$ and 
$A^*$ is irreducible tridiagonal.
\end{definition}

\begin{definition}    \label{def:LBTD}   \samepage
\ifDRAFT {\rm def:LBTD}. \fi
A Leonard pair $A,A^*$ on $V$ is said to have {\em LB-TD form}
whenever there exists a basis for $V$ with respect to which
the matrices representing $A,A^*$ form an LB-TD pair in $\Mat_{d+1}(\F)$.
\end{definition}

\begin{note}    \label{note:alpha0}    \samepage
\ifDRAFT {\rm note:alpha0}. \fi
Let $A,A^*$ be a Leonard pair on $V$.
For scalars $\alpha$, $\alpha^*$,
the pair $A + \alpha I$, $A^* + \alpha^* I$ is also a Leonard pair on $V$,
which is called a {\em translation} of $A,A^*$.
Here $I$ denotes the identity.
If $A,A^*$ has LB-TD form, then any translation of $A,A^*$ has LB-TD form.
\end{note}

Below we display a family of LB-TD Leonard pairs in $\Mat_{d+1}(\F)$.
Consider the following LB-TD pair in $\Mat_{d+1}(\F)$:
\begin{align}           \label{eq:LBTD}
A &=  
 \begin{pmatrix}
  \th_0  & & & & & \text{\bf 0} \\
  1 & \th_1  \\
   & 1 & \th_2 \\
   & & \cdot & \cdot \\
   & & & \cdot & \cdot \\
  \text{\bf 0}  & & & & 1 & \th_{d}
 \end{pmatrix},
& 
A^{*} &=
 \begin{pmatrix}
  x_0 & y_1 & & & & \text{\bf 0} \\
  z_1 & x_1 & y_2  \\
   & z_2 & x_2 & \cdot  \\
   & & \cdot & \cdot & \cdot \\
   & & & \cdot & \cdot & y_{d} \\
  \text{\bf 0} & & & & z_d & x_d
 \end{pmatrix}.
\end{align}

\begin{proposition}   \label{prop:ex1}  \samepage
\ifDRAFT {\rm prop:ex1}. \fi
Fix a nonzero scalar $q$ that is not a root of unity.
Let $\alpha$, $\alpha^*$, $a$, $a'$, $b$, $b'$, $c$ be scalars
with $c \neq 0$.
Define scalars
$\{\th_i\}_{i=0}^d$, $\{x_i\}_{i=0}^d$, $\{y_i\}_{i=1}^d$, $\{z_i\}_{i=1}^d$ by
\begin{align}
  \th_i &= \alpha + a q^{2i-d} + a' q^{d-2i},     \label{eq:ex1thi}
\\
  x_i &=  \alpha^* + (b+b')q^{d-2i}                 
          + a' c q^{d-2i}(q^{d+1} + q^{-d-1} - q^{d-2i-1} - q^{d-2i+1}),    \label{eq:ex1xi}
\\
  y_i &= (q^{i}-q^{-i})(q^{d-i+1}-q^{i-d-1}) 
             (b - a' c q^{d-2i+1})(b' - a' c q^{d-2i+1}) c^{-1},                 \label{eq:ex1yi}
 \\
  z_i &= - c q^{d-2i+1}.     \label{eq:ex1zi}
\end{align}
Then the matrices $A,A^*$ form an LB-TD Leonard pair in $\Mat_{d+1}(\F)$
if and only if  the scalars $a$, $a'$, $b$, $b'$, $c$ satisfy the following inequalities:
\begin{align}
& a \not\in \{ a' q^{2d-2}, \, a' q^{2d-4}, \, \ldots, \, a' q^{2-2d} \},  \label{eq:ex1cond1}
\\
& b \not\in \{ b' q^{2d-2}, \, b' q^{2d-4}, \, \ldots, \, b' q^{2-2d} \},  \label{eq:ex1cond2}
\\
& b c^{-1},\, b' c^{-1}  \not\in
 \{ a q^{d-1}, \, a q^{d-3}, \, \ldots, a q^{1-d} \} \cup
 \{ a' q^{d-1}, \, a' q^{d-3}, \, \ldots, a' q^{1-d} \}.         \label{eq:ex1cond3}
\end{align}
\end{proposition}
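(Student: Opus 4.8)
The plan is to verify Definition \ref{def:LP} for the matrices in \eqref{eq:LBTD} by extracting their eigenvalue data and split sequence, and then to appeal to the classification of Leonard pairs by parameter arrays \cite{T:Leonard, T:survey}. By Note \ref{note:alpha0} the scalars $\alpha,\alpha^*$ only produce a translation, so I would first normalize $\alpha=\alpha^*=0$ and restore them at the end. Since $A$ in \eqref{eq:LBTD} is lower triangular, its eigenvalues are the diagonal entries $\{\th_i\}_{i=0}^d$ of \eqref{eq:ex1thi}. A short computation gives $\th_i-\th_j = (q^{2i}-q^{2j})\,q^{-d}\,(a-a'q^{2d-2i-2j})$, and since $q$ is not a root of unity this shows the $\th_i$ are mutually distinct precisely when \eqref{eq:ex1cond1} holds. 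Hence $A$ is multiplicity-free if and only if \eqref{eq:ex1cond1}.

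Next I would determine the eigenvalues of $A^*$. The natural route is to compute the characteristic polynomial of the tridiagonal matrix $A^*$ via the three-term recurrence in its leading principal minors and to prove by induction that it factors as $\prod_{i=0}^d(\lambda-\th_i^*)$ with $\th_i^*=\alpha^*+b\,q^{2i-d}+b'\,q^{d-2i}$; as a consistency check one verifies $\tr A^*=\sum_i x_i$ agrees with $\sum_i\th_i^*$, the $a'c$-contribution in \eqref{eq:ex1xi} summing to zero. By the same $q$-exponent argument as for $A$, the $\th_i^*$ are mutually distinct if and only if \eqref{eq:ex1cond2}, so $A^*$ is multiplicity-free iff \eqref{eq:ex1cond2}. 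Irreducibility of $A^*$ is read off directly from \eqref{eq:ex1yi}, \eqref{eq:ex1zi}: one has $z_i=-cq^{d-2i+1}\neq 0$ because $c\neq 0$, while the factors $(q^i-q^{-i})$ and $(q^{d-i+1}-q^{i-d-1})$ in $y_i$ are nonzero; thus $y_i\neq 0$ for $1\le i\le d$ exactly when $bc^{-1},b'c^{-1}\notin\{a'q^{d-1},a'q^{d-3},\ldots,a'q^{1-d}\}$, which is the part of \eqref{eq:ex1cond3} involving $a'$.

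With the eigenvalues in hand I would assemble the parameter array $(\{\th_i\},\{\th_i^*\},\{\varphi_i\},\{\phi_i\})$ of the pair and check the defining conditions (PA1)--(PA5) of \cite{T:Leonard}. Conditions (PA1) and the distinctness part of (PA2) are exactly the statements above, and (PA5) holds automatically: for the $q$-geometric sequences $\th_i,\th_i^*$ the ratio $(\th_{i-2}-\th_{i+1})/(\th_{i-1}-\th_i)$ equals $q^2+q^{-2}+1$ independently of $i$, so $\beta=q^2+q^{-2}$. The remaining work is to evaluate the split sequence from (PA3)--(PA4), show that each $\varphi_i,\phi_i$ factors into linear terms in the parameters, and conclude that they are all nonzero precisely when $bc^{-1},b'c^{-1}\notin\{aq^{d-1},aq^{d-3},\ldots,aq^{1-d}\}$, i.e.\ the remaining part of \eqref{eq:ex1cond3}. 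Granting this, the classification theorem produces a Leonard pair with this parameter array; for sufficiency I would then check that the matrices \eqref{eq:LBTD} realize that array, so that by Definition \ref{def:matLBTD} the pair is LB-TD. For the converse, if $A,A^*$ is an LB-TD Leonard pair then $A$ and $A^*$ are each multiplicity-free and $A^*$ is irreducible tridiagonal, forcing \eqref{eq:ex1cond1}, \eqref{eq:ex1cond2}, and the $a'$-part of \eqref{eq:ex1cond3}, while nonvanishing of the split sequence forces the $a$-part of \eqref{eq:ex1cond3}.

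The main obstacle I anticipate is the analysis of $A^*$: unlike $A$, it is tridiagonal rather than triangular and is not in either canonical Leonard form, so neither its eigenvalues nor its split sequence can be read off directly. The inductive evaluation of its characteristic polynomial and, above all, the verification that the explicit entries \eqref{eq:ex1xi}--\eqref{eq:ex1zi} realize the stated parameter array (equivalently, producing the change of basis that exhibits the two canonical forms) are the computational heart of the argument. Once the eigenvalues $\{\th_i^*\}$ and the factored split sequence are established, matching them against (PA1)--(PA5) and tracking which linear factor vanishes under each failed inequality is routine bookkeeping with $q$-exponentials.
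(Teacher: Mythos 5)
Your overall strategy is the same as the paper's (reduce everything to the parameter--array characterization of Leonard pairs, i.e.\ Lemma \ref{lem:characterize}), and your bookkeeping is sound: the factorization $\th_i-\th_j=(q^{2i}-q^{2j})q^{-d}(a-a'q^{2d-2i-2j})$ does match distinctness of the $\th_i$ with \eqref{eq:ex1cond1}, the factors of $y_i$ do give the $a'$-part of \eqref{eq:ex1cond3}, and the nonvanishing of $\vphi_i,\phi_i$ does account for the rest. But the proposal has a genuine gap exactly where you flag ``the computational heart'': you never actually produce the ingredient that makes the pair a Leonard pair. Knowing that $A$ and $A^*$ are multiplicity-free with the right spectra and that a candidate sequence $(\{\th_i\},\{\th^*_i\},\{\vphi_i\},\{\phi_i\})$ satisfies Definition \ref{def:parray} does not yet show that $A,A^*$ \emph{has} that parameter array, nor that it is a Leonard pair at all; Lemma \ref{lem:characterize} only applies once $A,A^*$ are exhibited in the split form \eqref{eq:splitAAs} with respect to some basis. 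Your plan to ``evaluate the split sequence from (PA3)--(PA4)'' is also circular as stated: those identities express $\vphi_i$ in terms of $\phi_1$ and vice versa, so they cannot be used to \emph{define} the split sequences of the given matrices without independent access to the split basis.

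The paper closes this gap constructively: it writes down an explicit basis $\{u_r\}_{r=0}^d$ of $\F^{d+1}$ (each entry a product of $q$-factors and terms $b'-acq^{2h-d+1}$, $b'-a'cq^{2h-d+1}$) and verifies directly that $Au_r=\th_ru_r+u_{r+1}$ and $A^*u_r=\th^*_ru_r+\vphi_ru_{r-1}$, so that $A,A^*$ take the form \eqref{eq:splitAAs} and Lemma \ref{lem:characterize} applies in both directions at once. This single computation replaces your proposed induction on the principal minors of $A^*$ (which would only yield the spectrum of $A^*$, still short of the Leonard-pair property) and the unperformed ``realization'' step. Until you supply such a basis, or an equivalent verification of Definition \ref{def:LP}, the sufficiency direction of the proposition is not proved.
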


To state our further results, we recall some materials concerning Leonard pairs.
Consider a Leonard pair $A,A^*$ on $V$.
We first recall some facts concerning the eigenvalues of $A,A^*$.
By \cite[Lemma 1.3]{T:Leonard} each of $A,A^*$ has mutually distinct $d+1$ eigenvalues.
Let $\{\th_i\}_{i=0}^d$ be an ordering of the eigenvalues of $A$.
For $0 \leq i \leq d$ pick an eigenvector  $v_i \in V$ of $A$ 
associated with $\th_i$.
Then the ordering $\{\th_i\}_{i=0}^d$ is said to be {\em standard} whenever
the basis $\{v_i\}_{i=0}^d$ satisfies Definition \ref{def:LP}(ii).
A standard ordering of $A^*$ is similarly defined.
For a standard ordering $\{\th_i\}_{i=0}^d$ of the eigenvalues of $A$,
the ordering $\{\th_{d-i}\}_{i=0}^d$ is also standard and no further
ordering is standard.
A similar result applies to $A^*$.
Let $\{\th_i\}_{i=0}^d$ (resp.\ $\{\th^*_i\}_{i=0}^d$) be a standard ordering
of the eigenvalues of $A$ (resp.\ $A^*$).
By \cite[Theorem 1.9]{T:Leonard} the expressions
\begin{align}                 \label{eq:indep0}
  & \frac{\th_{i-2} - \th_{i+1}}
            {\th_{i-1} - \th_{i} },
  &&  \frac{\th^*_{i-2} - \th^*_{i+1}}
            {\th^*_{i-1} - \th^*_{i} }
\end{align}
are equal and independent of $\, i$ for $2 \leq i \leq d-1$.
Next we recall the notion of a parameter array of $A,A^*$.

\begin{lemma} {\rm \cite[Theorem 3.2]{T:Leonard} }     \label{lem:split}   \samepage
\ifDRAFT {\rm lem:split}. \fi
For a Leonard pair $A,A^*$ on $V$ and 
a standard ordering $\{\th_i\}_{i=0}^d$ (resp.\ $\{\th^*_i\}_{i=0}^d$) 
of the eigenvalues of $A$ (resp.\ $A^*$),
there exists a basis $\{u_i\}_{i=0}^d$ for $V$
and there exist scalars $\{\vphi_i\}_{i=1}^d$ such that
the matrices representing $A,A^*$ with respect to $\{u_i\}_{i=0}^d$ are
\begin{align}              \label{eq:splitAAs} 
A &: \;\; 
 \begin{pmatrix}
  \th_0  & & & & & \text{\bf 0} \\
  1 & \th_1  \\
   & 1 & \th_2 \\
   & & \cdot & \cdot \\
   & & & \cdot & \cdot \\
  \text{\bf 0}  & & & & 1 & \th_{d}
 \end{pmatrix},
& 
A^*  &: \;\;
 \begin{pmatrix}
  \th^*_0 & \vphi_1 & & & & \text{\bf 0} \\
     & \th^*_1 & \vphi_2  \\
   &  & \th^*_2 & \cdot  \\
   & &    & \cdot & \cdot \\
   & & &    & \cdot & \vphi_d \\
  \text{\bf 0} & & & &  & \th^*_d
 \end{pmatrix}.
\end{align}
The sequence $\{\vphi_i\}_{i=0}^d$ is uniquely determined by the ordering
$(\{\th_i\}_{i=0}^d, \{\th^*_i\}_{i=0}^d)$.
Moreover $\vphi_i \neq 0$ for $1 \leq i \leq d$.
\end{lemma}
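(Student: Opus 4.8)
The plan is to build the asserted basis from the \emph{split decomposition} of $V$ determined by the two standard orderings, so I first set that up. For $0 \le i \le d$ let $E_i$ (resp.\ $E^*_i$) be the primitive idempotent of $A$ (resp.\ $A^*$) for $\th_i$ (resp.\ $\th^*_i$); since each of $A,A^*$ has $d+1$ distinct eigenvalues, every $E_iV$ and $E^*_iV$ is one-dimensional. Because $\{\th^*_i\}_{i=0}^d$ is standard, in the $A^*$-eigenbasis $\{v_i\}_{i=0}^d$ (with $A^*v_i=\th^*_iv_i$) the matrix of $A$ is irreducible tridiagonal; dually, in the $A$-eigenbasis $\{w_i\}_{i=0}^d$ the matrix of $A^*$ is irreducible tridiagonal. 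I then introduce the increasing flag $V_i = E^*_0V+\cdots+E^*_iV = \gen{v_0,\ldots,v_i}$ and the decreasing flag $W_i = E_iV+\cdots+E_dV = \gen{w_i,\ldots,w_d}$, and set $U_i = V_i \cap W_i$ (with $V_{-1}=W_{d+1}=0$).

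The two tridiagonal shapes give the key containments. From $A$ being tridiagonal on $\{v_i\}$ one gets $AV_i \subseteq V_{i+1}$, while $(A-\th_iI)$ annihilates $E_iV$ and preserves each other $E_jV$, so $(A-\th_iI)W_i \subseteq W_{i+1}$; together these yield the raising property $(A-\th_iI)U_i \subseteq U_{i+1}$, and symmetrically $(A^*-\th^*_iI)U_i \subseteq U_{i-1}$. I would then put $u_0=v_0$ and $u_{i+1}=(A-\th_iI)u_i$, so that $u_i \in U_i$ for all $i$ by the raising property. Tracking coordinates in $\{v_i\}$, an easy induction using that the subdiagonal entries of $A$ are nonzero shows that $u_i \in V_i$ with a nonzero $v_i$-coefficient; hence the transition matrix between $\{v_i\}$ and $\{u_i\}$ is triangular and invertible, so $\{u_i\}_{i=0}^d$ is a basis for $V$.

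The dimension bookkeeping is the crux. Since $u_j \in V_j \subseteq V_i$ for $j\le i$ and $u_j \in W_j \subseteq W_i$ for $j \ge i$, the whole basis $\{u_j\}$ lies in $V_i+W_i$, forcing $V_i+W_i=V$; with $\dim V_i=i+1$ and $\dim W_i=d-i+1$ this gives $\dim U_i=(i+1)+(d-i+1)-(d+1)=1$, whence $U_i=\gen{u_i}$ and $V=\bigoplus_{i=0}^d U_i$. Reading off the action on $\{u_i\}$: by construction $Au_i=\th_iu_i+u_{i+1}$ for $i<d$ and $Au_d=\th_du_d$ (as $(A-\th_dI)U_d \subseteq U_{d+1}=0$), so $A$ has the stated lower bidiagonal shape; by the lowering property $A^*u_i=\th^*_iu_i+\vphi_iu_{i-1}$ for some scalars $\vphi_i$, giving the upper bidiagonal shape for $A^*$.

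It remains to see that $\vphi_i \neq 0$ and that $\{\vphi_i\}_{i=1}^d$ is uniquely determined. For uniqueness, any basis putting $A,A^*$ in the displayed forms must have its zeroth vector an eigenvector of $A^*$ for $\th^*_0$, hence a scalar multiple $\lambda u_0$; the recursion $u_{i+1}=(A-\th_iI)u_i$ then forces the entire basis to equal $\lambda u_i$, a uniform rescaling that leaves the superdiagonal entries $\vphi_i$ unchanged. For $\vphi_i \neq 0$, I would show that $A,A^*$ share no common invariant subspace other than $0$ and $V$: any $A^*$-invariant subspace is the span of a subset of $\{v_i\}$, and $A$-invariance together with the nonvanishing off-diagonal entries of $A$ forces that subset to be empty or all of $\{0,\ldots,d\}$. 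If some $\vphi_m=0$ then $\gen{u_m,\ldots,u_d}$ would be a proper nonzero common invariant subspace, a contradiction; hence every $\vphi_i \neq 0$. The main obstacle is precisely the general-position claim $\dim U_i=1$, which the explicit triangular basis $\{u_i\}$ resolves cleanly.
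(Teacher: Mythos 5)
Lemma~\ref{lem:split} is stated in the paper as a quoted result (\cite[Theorem~3.2]{T:Leonard}) with no proof given, so there is nothing internal to compare against; your argument is a correct and complete reconstruction of the standard split-decomposition proof from that reference. All the key steps check out: the raising/lowering containments $(A-\th_iI)U_i\subseteq U_{i+1}$ and $(A^*-\th^*_iI)U_i\subseteq U_{i-1}$, the triangular-transition argument showing $\{u_i\}$ is a basis and $\dim U_i=1$, the rescaling argument for uniqueness of $\{\vphi_i\}_{i=1}^d$, and the irreducibility argument for $\vphi_i\neq 0$.
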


With reference to Lemma \ref{lem:split}
we refer to $\{\vphi_i\}_{i=1}^d$ as the {\em first split sequence} of $A,A^*$
associated with the ordering $(\{\th_i\}_{i=0}^d, \{\th^*_i\}_{i=0}^d)$.
By the {\em second split sequence} of $A,A^*$ associated with
the ordering $(\{\th_i\}_{i=0}^d, \{\th^*_i\}_{i=0}^d)$ we mean the first split sequence
of $A,A^*$ associate with the ordering
 $(\{\th_{d-i}\}_{i=0}^d, \{\th^*_i\}_{i=0}^d)$.
By a {\em parameter array of $A,A^*$} we mean the sequence
\begin{equation}   \label{eq:parray}
  (\{\th_i\}_{i=0}^d, \{\th^*_i\}_{i=0}^d, \{\vphi_i\}_{i=1}^d, \{\phi_i\}_{i=1}^d),
\end{equation}
where $\{\th_i\}_{i=0}^d$ is a standard ordering of the eigenvalues of $A$,
$\{\th^*_i\}_{i=0}^d$ is a standard ordering of the eigenvalues of $A^*$,
and $\{\vphi_i\}_{i=1}^d$ (resp.\ $\{\phi_i\}_{i=1}^d$) is the first split sequence
(resp.\ second split sequence) of $A,A^*$
associated with the ordering $(\{\th_i\}_{i=0}^d, \{\th^*_i\}_{i=0}^d$).

For the Leonard pair given in Propositions \ref{prop:ex1},
the corresponding parameter array is as follows:

\begin{proposition}   \label{prop:ex1parray}   \samepage
\ifDRAFT {\rm prop:ex1parray}. \fi
With reference to Proposition \ref{prop:ex1}, assume $A,A^*$ is an LB-TD Leonard pair
in $\Mat_{d+1}(\F)$.
Define scalars $\{\th^*_i\}_{i=0}^d$, $\{\vphi_i\}_{i=1}^d$, $\{\phi_i\}_{i=1}^d$ by
\begin{align}
 \th^*_i &=  \alpha^* + b q^{2i-d} + b' q^{d-2i},       \label{eq:ex1thsi}
\\
 \vphi_i &= (q^i-q^{-i})(q^{d-i+1} - q^{i-d-1})(b - a' c q^{d-2i+1})(b' - a c q^{2i-d-1})c^{-1},
                                                                      \label{eq:ex1vphii}
\\
 \phi_i &= (q^i-q^{-i})(q^{d-i+1} - q^{i-d-1})(b - a c q^{d-2i+1})(b' - a' c q^{2i-d-1})c^{-1}.
                                                                      \label{eq:ex1phii}
\end{align}
Then \eqref{eq:parray} is a parameter array of $A,A^*$.
\end{proposition}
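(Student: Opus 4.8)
The plan is to verify the claimed parameter array by realizing the split basis of Lemma \ref{lem:split} explicitly inside $\F^{d+1}$ and reading off the split sequences from the action of $A^*$ on it. The guiding observation is that in the standard basis $e_0,\dots,e_d$ of $\F^{d+1}$ the matrix $A$ already has the lower bidiagonal shape of \eqref{eq:splitAAs}, so that $(A-\th_i I)e_i=e_{i+1}$ for $0\le i<d$. Consequently, for the ordering $(\{\th_i\}_{i=0}^d,\{\th^*_i\}_{i=0}^d)$, the split basis $u_0,\dots,u_d$ is forced to have the form $u_i=(A-\th_{i-1}I)\cdots(A-\th_0 I)u_0$, where $u_0$ spans the component $U_0$ of the split decomposition and is therefore the eigenvector of $A^*$ for the eigenvalue $\th^*_0$.

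First I would produce $u_0$ explicitly. Writing $u_0=\sum_k c_k e_k$, the equation $A^*u_0=\th^*_0 u_0$ becomes the three-term recurrence $z_i c_{i-1}+x_i c_i+y_{i+1}c_{i+1}=\th^*_0 c_i$ coming from the entries \eqref{eq:ex1xi}--\eqref{eq:ex1zi}; because these entries are of $q$-Racah type, the recurrence telescopes and yields a closed product (a $q$-shifted factorial) for the $c_k$, with $c_0\neq 0$. I would then define the $u_i$ by the displayed formula, so that $A u_i=\th_i u_i+u_{i+1}$ holds by construction (with $u_{d+1}=0$, consistent with the characteristic polynomial of $A$).

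The heart of the proof is to establish $A^*u_i=\th^*_i u_i+\vphi_i u_{i-1}$ for $1\le i\le d$, with $\vphi_i$ as in \eqref{eq:ex1vphii}, together with $A^*u_0=\th^*_0 u_0$. Granting this, the pair $A,A^*$ has in the basis $u_0,\dots,u_d$ exactly the shape \eqref{eq:splitAAs} with the claimed $\th^*_i$ on the diagonal of $A^*$, and the relations themselves force the $u_i$ to be linearly independent; by Lemma \ref{lem:split} this certifies that $(\{\th_i\},\{\th^*_i\})$ is a standard ordering and that $\{\vphi_i\}$ is its first split sequence, whence $\{\th^*_i\}$ are the eigenvalues of $A^*$. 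I would prove the displayed identity by induction on $i$: writing $u_{i+1}=(A-\th_i I)u_i$ gives $A^*u_{i+1}=A^*A\,u_i-\th_i A^*u_i$, and the commutator $A^*A-AA^*$ of the two explicit matrices is a fixed banded matrix whose action on $u_i$ is expressible through $u_{i-1},u_i,u_{i+1}$; substituting the inductive hypothesis and $A u_{i-1}=\th_{i-1}u_{i-1}+u_i$ reduces the step for $i+1$ to a single $q$-identity. I expect this inductive $q$-identity -- tracking the two factors $(b-a'cq^{d-2i+1})$ and $(b'-acq^{2i-d-1})$ in $\vphi_i$ -- to be the main obstacle, since it is precisely where the special choice of $x_i,y_i,z_i$ enters in full.

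Finally, the second split sequence $\{\phi_i\}$ is the first split sequence for the reversed ordering $(\{\th_{d-i}\}_{i=0}^d,\{\th^*_i\}_{i=0}^d)$. I would repeat the construction with the same $A^*$-eigenvector $u_0$ but building the split basis from the reversed $A$-eigenvalue ordering, i.e.\ with factors $(A-\th_{d-j}I)$; the computation is identical in form and yields \eqref{eq:ex1phii}. Comparing with \eqref{eq:ex1vphii} one sees that $\phi_i$ is $\vphi_i$ under the interchange $a\leftrightarrow a'$, which reflects the fact that $\th_{d-i}$ is obtained from $\th_i$ by the same interchange in \eqref{eq:ex1thi}. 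Throughout, the hypotheses \eqref{eq:ex1cond1}--\eqref{eq:ex1cond3} are exactly what guarantee that the $\th_i$ are mutually distinct, the $\th^*_i$ are mutually distinct, and each $\vphi_i,\phi_i$ is nonzero, as required by Lemma \ref{lem:split}.
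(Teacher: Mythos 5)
Your proposal is correct and follows essentially the same route as the paper: the paper's proof also exhibits the split basis $\{u_r\}_{r=0}^d$ explicitly in $\F^{d+1}$ (giving a closed product formula for the entries of $u_r$ rather than deriving it by recursion), verifies $Au_r=\th_r u_r+u_{r+1}$ and $A^*u_r=\th^*_r u_r+\vphi_r u_{r-1}$, and concludes via Lemma \ref{lem:characterize}. The only cosmetic difference is that the paper obtains the second split sequence $\{\phi_i\}$ by checking conditions (iii)--(v) of Definition \ref{def:parray} directly rather than by rebuilding the split basis for the reversed ordering, but both are valid.
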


For the rest of this section, we assume $d \geq 3$.
Let $A,A^*$ be a Leonard pair on $V$,
and let $\{\th_i\}_{i=0}^d$ (resp.\ $\{\th^*_i\}_{i=0}^d$) be a standard ordering
of the eigenvalues of $A$ (resp.\ $A^*$).
Let $\beta$ be one less the common value of \eqref{eq:indep0}.
We call $\beta$ the {\em fundamental parameter} of $A,A^*$.
Let $q$ be a nonzero scalar such that $\beta = q^2 + q^{-2}$.
We call $q$ a {\em quantum parameter} of $A,A^*$.
We now give a solution of Problem \ref{prob} for the case that $q$ is
not a root of unity:

\begin{theorem}   \label{thm:main}   \samepage
\ifDRAFT {\rm thm:main}. \fi
Consider sequences of scalars
$\{\th_i\}_{i=0}^d$, $\{x_i\}_{i=0}^d$, $\{y_i\}_{i=1}^d$, $\{z_i\}_{i=1}^d$
such that $y_i z_i \neq 0$ for $1 \leq i \leq d$,
and consider the matrices $A,A^*$ in \eqref{eq:LBTD}.
Assume $A,A^*$ is a Leonard pair in $\Mat_{d+1}(\F)$ with quantum parameter $q$
that is not a root of unity.
Then, after replacing $q$ with $q^{-1}$ if necessary,
there exist scalars $\alpha$, $\alpha^*$, $a$, $a'$, $b$, $b'$, $c$ with $c \neq 0$ that satisfy
\eqref{eq:ex1thi}--\eqref{eq:ex1cond3}.
\end{theorem}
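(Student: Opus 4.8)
The plan is to reconstruct the seven scalars $\alpha$, $\alpha^*$, $a$, $a'$, $b$, $b'$, $c$ from the intrinsic data of the Leonard pair and then to read off the inequalities \eqref{eq:ex1cond1}--\eqref{eq:ex1cond3} from the Leonard pair axioms. I would begin with the eigenvalues. Writing $\{w_i\}_{i=0}^d$ for the LB-TD basis, the fact that $A$ is lower bidiagonal makes each $U_i=\mathrm{span}(w_i,\dots,w_d)$ an $A$-invariant subspace on which $A$ acts as $\theta_i$ modulo $U_{i+1}$, while the tridiagonality of $A^*$ gives $A^*U_i\subseteq U_{i-1}$. This is exactly a split-type flag, so the diagonal ordering $\{\theta_i\}_{i=0}^d$ is a standard ordering of the eigenvalues of $A$. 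Hence the common value of \eqref{eq:indep0} applies, and with $\beta=q^2+q^{-2}$ and $q$ not a root of unity, solving the three-term recurrence $\theta_{i-1}+\theta_{i+1}=\beta\theta_i+\text{(const)}$ produces $\theta_i=\alpha+aq^{2i-d}+a'q^{d-2i}$, which is \eqref{eq:ex1thi}; the replacement $q\leftrightarrow q^{-1}$ fixes the labelling. The same argument applied to a standard ordering of the eigenvalues of $A^*$ yields $\theta^*_i=\alpha^*+bq^{2i-d}+b'q^{d-2i}$ as in \eqref{eq:ex1thsi}. At this stage the requirement that the $\theta_i$ (resp. $\theta^*_i$) be mutually distinct is precisely \eqref{eq:ex1cond1} (resp. \eqref{eq:ex1cond2}).

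Next I would compare the LB-TD basis with the split basis of Lemma \ref{lem:split} for the ordering $(\{\theta_i\}_{i=0}^d,\{\theta^*_i\}_{i=0}^d)$. Both bases put $A$ into the identical lower bidiagonal shape, and each is generated from a single seed vector via $w_i=(A-\theta_{i-1})\cdots(A-\theta_0)w_0$; consequently the two bases differ by a transition matrix commuting with $A$. Since $A$ has $d+1$ distinct eigenvalues it is nonderogatory, so this transition is a polynomial $p(A)$, necessarily lower triangular. Therefore $A^*=p(A)^{-1}B\,p(A)$, where $B$ is the upper bidiagonal matrix with diagonal $\{\theta^*_i\}_{i=0}^d$ and superdiagonal the first split sequence $\{\vphi_i\}_{i=1}^d$ of Lemma \ref{lem:split}.

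The decisive step is to impose that $p(A)^{-1}B\,p(A)$ be tridiagonal. A priori $p$ carries $d$ essential parameters (its coefficients modulo an overall scalar), but the vanishing of every entry outside the three central diagonals forces $p$ to be determined up to scale by a single scalar; I would name this scalar $c$ and normalize through the subdiagonal, recovering $z_i=-cq^{d-2i+1}$ as in \eqref{eq:ex1zi}. Reading off the diagonal and superdiagonal of $p(A)^{-1}B\,p(A)$ then gives \eqref{eq:ex1xi} and \eqref{eq:ex1yi}, while the split sequences emerge as \eqref{eq:ex1vphii}--\eqref{eq:ex1phii}, in agreement with Proposition \ref{prop:ex1parray}. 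Finally the conditions $\vphi_i\neq0$ from Lemma \ref{lem:split}, together with $y_iz_i\neq0$ from the irreducibility of $A^*$, translate directly into \eqref{eq:ex1cond3}.

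The main obstacle is this last collapse: proving that tridiagonality shrinks the $d$-parameter transition family down to the single scalar $c$, and that the surviving entries match the stated closed forms. This is where the $q$-algebra is heaviest, and it is what ultimately pins the entire off-diagonal structure of $A^*$ to one degree of freedom (note already that the diagonal entry $x_i$ in \eqref{eq:ex1xi} is genuinely distinct from the eigenvalue $\theta^*_i$, so the conjugation must be computed rather than guessed). A secondary point to watch is consistency of the labelling choices: the diagonal order of $A$ is forced, whereas the standard order of $A^*$ and the single global replacement $q\leftrightarrow q^{-1}$ must be selected compatibly so that one value of $q$ governs both eigenvalue sequences simultaneously, which is guaranteed because the two ratios in \eqref{eq:indep0} share the common value $\beta+1$.
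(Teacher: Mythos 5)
Your overall strategy (change of basis from the split basis of Lemma \ref{lem:split} to the LB-TD basis by a polynomial in $A$) is coherent, and several preliminary observations are sound: the two bases are indeed generated from seed vectors by the same products $(A-\th_{i-1})\cdots(A-\th_0)$, so the transition is $s(A)$ for some polynomial $s$, and the distinctness of the $\th_i$ and $\th^*_i$ does translate into \eqref{eq:ex1cond1} and \eqref{eq:ex1cond2}. But the proof has a genuine gap at exactly the point you flag as ``the main obstacle'': you assert, without argument, that requiring $s(A)^{-1}Bs(A)$ to be tridiagonal collapses the $d$ essential parameters of $s$ to a single scalar $c$ and that the surviving entries are given by the closed forms \eqref{eq:ex1xi}--\eqref{eq:ex1zi}. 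That collapse \emph{is} the theorem; naming the computation is not doing it. In particular nothing in your sketch produces the relation $z_i=z_1q^{2-2i}$, the identification $\xi=-aa'c-bb'c^{-1}$ that ties $c$ to the split data, or the degenerate cases $a=a'q^{2d+2}$ and $a'=aq^{2d+2}$ (where the generic formulas for $x_0$ and $\xi$ break down and further subcases such as $z_1=-a^{-1}bq^{2d}$ must be ruled in or out); the paper needs Lemmas \ref{lem:case2subcases}--\ref{lem:case3subcase3} and a determinant argument (Lemma \ref{lem:det}) precisely to control these. The paper's actual engine is entirely different: it evaluates individual entries of the Askey--Wilson relations \eqref{eq:AW1}, \eqref{eq:AW2} to obtain three-term recurrences for the $z_i$, $x_i$, $y_i$ (Lemmas \ref{lem:zi}--\ref{lem:y1}), solves them in closed form (Lemmas \ref{lem:solxi}, \ref{lem:solyi}), and then runs the case analysis of Section \ref{sec:entriesAs}.

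Two secondary points. First, your argument that the diagonal ordering $\{\th_i\}_{i=0}^d$ is standard is incomplete: the flag $U_i=\mathrm{span}(w_i,\dots,w_d)$ only yields $E_jA^*E_i=0$ for $j<i-1$ (one side of tridiagonality with respect to the eigenbasis of $A$), which by itself does not force the ordering to be standard. The paper instead extracts $\gamma$ and $\varrho$ from entries of \eqref{eq:AW2} and invokes Lemma \ref{lem:standardorder}. Second, your derivation of \eqref{eq:ex1cond3} from $\vphi_i\neq 0$ and $y_iz_i\neq 0$ alone is insufficient: it misses the exclusions $bc^{-1}\notin\{aq^{d-1},\dots,aq^{1-d}\}$, which come from the nonvanishing of the second split sequence $\phi_i$. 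The paper avoids this by deducing all of \eqref{eq:ex1cond1}--\eqref{eq:ex1cond3} at the end from the ``only if'' direction of Proposition \ref{prop:ex1}, once the entry formulas are in hand.
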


Let $A,A^*$ be a Leonard pair on $V$  with parameter array \eqref{eq:parray}.
Our next result gives a necessary and sufficient condition on the parameter array
for that $A,A^*$ has LB-TD form. 
To state this result, we use the following notation.
Let $q$ be a quantum parameter of $A,A^*$, and assume $q$ is not a root of unity.
By \cite[Lemma 9.2]{T:Leonard} there exist scalars
$\alpha$, $\alpha^*$, $a$, $a'$, $b$, $b'$ such that
\begin{align}
  \th_i &= \alpha + a q^{2i-d} + a' q^{d-2i}  && (0 \leq i \leq d),  \label{eq:thi}
\\
 \th^*_i &= \alpha^* + b q^{2i-d} + b' q^{d-2i} && (0 \leq i \leq d). \label{eq:thsi}
\end{align}
By \cite[Lemma 13.1]{NT:affine} there exists a scalar $\xi$ such that
\begin{align}
\vphi_i &= (q^i-q^{-i})(q^{i-d-1}-q^{d-i+1})(\xi + a b q^{2i-d-1} + a' b' q^{d-2i+1})
                                                           && (1 \leq i \leq d),    \label{eq:vphi}
\\
\phi_i &= (q^i-q^{-i})(q^{i-d-1}-q^{d-i+1})(\xi + a' b q^{2i-d-1} + a b' q^{d-2i+1})
                                                           && (1 \leq i \leq d).    \label{eq:phi}
\end{align}

\begin{theorem}   \label{thm:general}   \samepage
\ifDRAFT {\rm thm:general}. \fi
With the above notation,
the following {\rm (i)} and {\rm (ii)} are equivalent:
\begin{itemize}
\item[\rm (i)]
$A,A^*$ has LB-TD form.
\item[\rm (ii)]
At least two of $aa'$, $bb'$, $\xi$ are nonzero.
\end{itemize}
\end{theorem}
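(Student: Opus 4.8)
The plan is to reduce the statement to the solvability of a single quadratic equation, using the classification of Leonard pairs by parameter arrays as a bridge. Recall (see \cite{T:Leonard}) that a Leonard pair is determined up to isomorphism by its parameter array. Hence $A,A^*$ has LB-TD form precisely when it is isomorphic to one of the LB-TD Leonard pairs of Proposition \ref{prop:ex1}; by Theorem \ref{thm:main} together with Proposition \ref{prop:ex1parray}, this holds if and only if the parameter array \eqref{eq:parray} of $A,A^*$ coincides with the array of Proposition \ref{prop:ex1parray} for some scalars $a,a',b,b'$ and some $c \neq 0$ satisfying \eqref{eq:ex1cond1}--\eqref{eq:ex1cond3}. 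Since replacing $q$ by $q^{-1}$ merely interchanges $a \leftrightarrow a'$ and $b \leftrightarrow b'$ while fixing $\xi$, the three products $aa'$, $bb'$, $\xi$ are insensitive to that replacement, so I may fix one choice of $q$ throughout.

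First I would match the split sequences. Comparing the general expression \eqref{eq:vphi} for $\vphi_i$ with the LB-TD expression \eqref{eq:ex1vphii}, and using that $q$ is not a root of unity (so $q^i - q^{-i} \neq 0$ and $q^{i-d-1} - q^{d-i+1} \neq 0$ for $1 \leq i \leq d$), a short computation cancels the common factors and collapses the $i$-dependence to the single relation
\begin{equation*}
  \xi = -aa'c - bb'c^{-1}, \qquad\text{equivalently}\qquad aa'c^2 + \xi c + bb' = 0;
\end{equation*}
the same relation is forced by comparing \eqref{eq:phi} with \eqref{eq:ex1phii}. I would then check that the side conditions are automatic: \eqref{eq:ex1cond1} (resp.\ \eqref{eq:ex1cond2}) is equivalent to the $\th_i$ (resp.\ $\th^*_i$) being mutually distinct, and \eqref{eq:ex1cond3} is equivalent to $\vphi_i \neq 0$ and $\phi_i \neq 0$ for $1 \leq i \leq d$, all of which hold because $A,A^*$ is a Leonard pair. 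Given these, once a scalar $c \neq 0$ with $aa'c^2 + \xi c + bb' = 0$ is found, Proposition \ref{prop:ex1} produces a genuine LB-TD Leonard pair whose parameter array equals \eqref{eq:parray}, so $A,A^*$ has LB-TD form; conversely LB-TD form yields such a $c$. Thus the theorem reduces to: the quadratic $aa'c^2 + \xi c + bb' = 0$ has a nonzero root if and only if at least two of $aa'$, $bb'$, $\xi$ are nonzero.

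The last step is an elementary analysis of the quadratic, but it hinges on one structural input, which I expect to be the main obstacle: the degenerate case $aa' = bb' = \xi = 0$ must be excluded, for there the equation reads $0=0$ and every $c$ is a root, which would wrongly suggest LB-TD form with fewer than two of the products nonzero. I would rule this case out from the Leonard-pair hypothesis itself: setting $\xi = 0$ and using $aa' = 0$, at least one of $a,a'$ vanishes, and then the surviving monomials in \eqref{eq:vphi} and \eqref{eq:phi} show that $\vphi_i \neq 0$ and $\phi_i \neq 0$ can hold only if $b \neq 0$ and $b' \neq 0$, contradicting $bb' = 0$. With this case removed, at least one product is always nonzero, and the analysis is clean: if $aa' \neq 0$ the two roots have product $bb'/(aa')$ and sum $-\xi/(aa')$, so a nonzero root exists iff $bb' \neq 0$ or $\xi \neq 0$; if $aa' = 0$ the equation $\xi c + bb' = 0$ has a nonzero root iff $\xi \neq 0$ and $bb' \neq 0$. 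In both cases a nonzero root exists iff at least two of $aa'$, $bb'$, $\xi$ are nonzero, which is the desired equivalence.
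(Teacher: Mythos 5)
Your direction (ii)$\Rightarrow$(i) is correct and is essentially the paper's own argument: since $\F$ is algebraically closed and the degenerate case is excluded, the quadratic $aa'c^2+\xi c+bb'=0$ has a nonzero root $c$, the split sequences then take the form \eqref{eq:ex1vphii}, \eqref{eq:ex1phii}, the inequalities \eqref{eq:ex1cond1}--\eqref{eq:ex1cond3} follow from Definition \ref{def:parray}(i),(ii), and Lemma \ref{lem:characterize} gives an isomorphism onto the pair of Proposition \ref{prop:ex1}. This is exactly Lemma \ref{lem:closed2} combined with Proposition \ref{prop:ex1exists}. Your observation that $aa'=bb'=\xi=0$ is impossible for any Leonard pair (via the nonvanishing of $\vphi_i$ and $\phi_i$) is also correct.

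The genuine gap is in (i)$\Rightarrow$(ii). You obtain ``LB-TD form implies the parameter array matches Proposition \ref{prop:ex1parray} for some $c\neq 0$'' by citing Theorem \ref{thm:main}. But the paper's proof of Theorem \ref{thm:main} invokes Theorem \ref{thm:general} (explicitly, in Section \ref{sec:proofmain}, to guarantee that Lemma \ref{lem:closed2} applies), so your argument is circular within the paper's logical structure. More substantively, the assertion you borrow --- that every LB-TD Leonard pair with $q$ not a root of unity arises from Proposition \ref{prop:ex1} --- \emph{is} the hard content of the forward implication, and your proposal supplies no independent argument for it. The paper earns this implication by evaluating the Askey--Wilson relations entrywise on the LB-TD matrices (Sections \ref{sec:useAWrel} and \ref{sec:entriesAs}): Lemma \ref{lem:onenonzero} shows at least one of $aa'$, $bb'$ is nonzero; Lemma \ref{lem:x0xi} shows, in the generic case, that $\xi=aa'q^{1-d}z_1+bb'q^{d-1}z_1^{-1}$, i.e.\ that $c=-q^{1-d}z_1$ is a nonzero root of your quadratic; and Lemma \ref{lem:det} (a determinant computation on $A^*$) handles the boundary cases $a=a'q^{2d+2}$ and $a'=aq^{2d+2}$. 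Your reduction to the quadratic is the right frame, but without an argument of this kind extracting the relation $aa'c^2+\xi c+bb'=0$ from the LB-TD hypothesis itself, the forward direction assumes what it must prove.
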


This paper is organized as follows.
In Sections \ref{sec:rec} and \ref{sec:parray} we recall some materials concerning Leonard pairs.
In Section \ref{sec:proof1} we prove Propositions \ref{prop:ex1} and \ref{prop:ex1parray}.
In Section \ref{sec:AWrel} we recall the Askey-Wilson relations.
In Section \ref{sec:useAWrel}--\ref{sec:entriesAs} we use Askey-Wilson relations
to obtain the entries of $A^*$ in \eqref{eq:LBTD}.
In Section \ref{sec:proofgeneral} we prove Theorem \ref{thm:general}.
In Section \ref{sec:proofmain} we prove Theorem \ref{thm:main}.
Leonard pairs have been classified in \cite[Section 35]{T:survey}:
there are 7 types 
for the case that $q$ is not a root of unity.
In Section \ref{sec:types} we explain about which types of Leonard pairs
have LB-TD form.

\section{Recurrent sequences}
\label{sec:rec}

In this section we recall the notion of a recurrent sequence.
We also mention some lemmas for later use.
Assume $d \geq 3$ and consider a sequence $\{\th_i\}_{i=0}^d$
consisting of mutually distinct scalars.
We say $\{\th_i\}_{i=0}^d$ is {\em recurrent} whenever
the expression
\begin{equation}     \label{eq:rec}
  \frac{\th_{i-2} - \th_{i+1} }
         {\th_{i-1} - \th_{i} }
\end{equation}
is independent of $i$ for $2 \leq i \leq d-1$.
For a scalar $\beta$, we say $\{\th_i\}_{i=0}^d$ is
{\rm $\beta$-recurrent} whenever
\begin{align*}
 \th_{i-2} - (\beta+1) \th_{i-1} + (\beta+1) \th_i - \th_{i+1} &=0  && (2 \leq i \leq d-1).
\end{align*}
Observe that $\{\th_i\}_{i=0}^d$ is recurrent if and only if it is $\beta$-recurrent
for some $\beta$.
In this case, the value of \eqref{eq:rec} is equal to $\beta+1$.

\begin{lemma}  {\rm \cite[Lemmas 8.4, 8.5]{T:Leonard} }   \label{lem:recurrent}   \samepage
\ifDRAFT {\rm lem:recurrent}. \fi
Assume $\{\th_i\}_{i=0}^d$ is $\beta$-recurrent for some scalar $\beta$. 
Then the following hold:
\begin{itemize}
\item[\rm (i)]
There exists a scalar $\gamma$ such that
\begin{align}    \label{eq:gammarec}
 \gamma &= \th_{i-1} - \beta \th_i + \th_{i+1}   && (1 \leq i \leq d-1).
\end{align}
\item[\rm (ii)]
Let $\gamma$ be from {\rm (i)}.
Then there exists a scalar $\varrho$ such that
\begin{align}      \label{eq:rhorec}
 \varrho &= \th_{i-1}^2 - \beta \th_{i-1}\th_{i} + \th_{i}^2
                 - \gamma (\th_{i-1} + \th_i)
    &&  (1 \leq i \leq d).
\end{align}
\end{itemize}
\end{lemma}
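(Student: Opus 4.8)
The plan is to prove each of (i) and (ii) by the same device: exhibit the proposed quantity as a sequence indexed by $i$, show that its consecutive difference vanishes as a consequence of the hypotheses, and conclude that it is constant on the stated range.

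For part (i), I would set $\gamma_i = \th_{i-1} - \beta \th_i + \th_{i+1}$ for $1 \leq i \leq d-1$ and examine the difference $\gamma_i - \gamma_{i-1}$ for $2 \leq i \leq d-1$. A direct expansion gives
\[
  \gamma_i - \gamma_{i-1}
   = -\bigl(\th_{i-2} - (\beta+1)\th_{i-1} + (\beta+1)\th_i - \th_{i+1}\bigr),
\]
and the expression in parentheses is exactly the defining relation of $\beta$-recurrence, hence is zero. Therefore $\gamma_i$ is independent of $i$, and its common value is the scalar $\gamma$ sought in \eqref{eq:gammarec}.

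For part (ii), let $\varrho_i$ denote the right-hand side of \eqref{eq:rhorec} with $\gamma$ as in (i), now for $1 \leq i \leq d$. The key step is to compute $\varrho_{i+1} - \varrho_i$ for $1 \leq i \leq d-1$ and factor it. After the $\th_i^2$ and $\gamma \th_i$ terms cancel, the difference of the quadratic terms $\th_{i+1}^2 - \th_{i-1}^2$ factors as $(\th_{i+1}-\th_{i-1})(\th_{i+1}+\th_{i-1})$ and combines with the remaining linear terms to produce a common factor $\th_{i+1}-\th_{i-1}$, yielding
\[
  \varrho_{i+1} - \varrho_i
   = (\th_{i+1} - \th_{i-1})\,\bigl(\th_{i+1} + \th_{i-1} - \beta \th_i - \gamma\bigr).
\]
By part (i) the second factor equals $\gamma_i - \gamma = 0$, so the difference vanishes and $\varrho_i$ is constant for $1 \leq i \leq d$, giving the scalar $\varrho$.

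The entire argument is elementary telescoping, so there is no genuine obstacle; the only step requiring a moment's attention is the factorization in part (ii), where one must notice that the difference of quadratics splits off the factor $\th_{i+1}-\th_{i-1}$ and that the complementary factor is precisely the constant established in part (i). Once this observation is made, both parts follow immediately, and I would not expect the mutual distinctness of the $\{\th_i\}_{i=0}^d$ to be needed here, since the factor $\th_{i+1}-\th_{i-1}$ multiplies a quantity that already vanishes.
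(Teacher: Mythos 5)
Your proof is correct, and both index ranges and the factorization in part (ii) check out; the paper itself does not reprove this lemma but cites it from Terwilliger, where the argument is essentially the same telescoping computation you give. Your closing remark is also right: the mutual distinctness of the $\th_i$ is not needed, since the factor $\th_{i+1}-\th_{i-1}$ multiplies a quantity already shown to vanish by part (i).
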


\begin{lemma}   {\rm \cite[Lemma 8.4]{T:Leonard} }  \label{lem:betarec}  \samepage
\ifDRAFT {\rm lem:betarec}. \fi
Assume $\{\th_i\}_{i=0}^d$ satisfies \eqref{eq:gammarec} for some scalars $\beta$ and $\gamma$.
Then $\{\th_i\}_{i=0}^d$ is $\beta$-recurrent.
\end{lemma}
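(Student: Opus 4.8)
The plan is to derive the $\beta$-recurrence directly from the two-term relation \eqref{eq:gammarec} by subtracting consecutive instances of it. First I would write \eqref{eq:gammarec} at index $i$, namely $\gamma = \th_{i-1} - \beta \th_i + \th_{i+1}$, and also at index $i-1$, namely $\gamma = \th_{i-2} - \beta \th_{i-1} + \th_i$. Both hold provided the indices lie in the allowed range $1 \leq \cdot \leq d-1$: the instance at $i$ requires $i \leq d-1$, and the instance at $i-1$ requires $i-1 \geq 1$, so both are simultaneously valid exactly when $2 \leq i \leq d-1$.

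Subtracting the second equation from the first eliminates $\gamma$, and after collecting the $\th_{i-1}$ and $\th_i$ terms and changing sign I obtain
\[
0 = \th_{i-2} - (\beta+1)\th_{i-1} + (\beta+1)\th_i - \th_{i+1}.
\]
This is precisely the defining relation for $\beta$-recurrence. Since the identity holds for every $i$ in the range $2 \leq i \leq d-1$, the sequence $\{\th_i\}_{i=0}^d$ is $\beta$-recurrent, which is what we want.

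The only point that needs care — and hence the main, though quite mild, obstacle — is the index bookkeeping: one must verify that the range $2 \leq i \leq d-1$ appearing in the definition of $\beta$-recurrence is exactly the set of indices for which both shifted copies of \eqref{eq:gammarec} are available. No genuine difficulty arises, as the whole argument is a single linear manipulation. Conceptually, the lemma simply records that $\beta$-recurrence is the \emph{first difference} of the relation \eqref{eq:gammarec}, which is why it serves as a partial converse to Lemma \ref{lem:recurrent}(i).
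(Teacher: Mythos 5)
Your proof is correct: subtracting the instance of \eqref{eq:gammarec} at $i-1$ from the instance at $i$ eliminates $\gamma$ and yields exactly the $\beta$-recurrence relation on the required range $2 \leq i \leq d-1$. The paper cites this lemma from Terwilliger without reproducing a proof, and your differencing argument is precisely the standard one, so there is nothing to add.
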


Assume $\{\th_i\}_{i=0}^d$ is $\beta$-recurrent,
and take a nonzero scalar $q$ such that $\beta = q^2 + q^{-2}$.
Assume $q$ is not a root of unity.
By \cite[Lemma 9.2]{T:Leonard} there exist scalars $\alpha$, $a$, $a'$ such that
\begin{align}    \label{eq:thiclosed}
 \th_i &= \alpha + a q^{2i-d} + a' q^{d-2i}   && (0 \leq i \leq d).
\end{align}

\begin{lemma}   \label{lem:gammarhoclosed}   \samepage
\ifDRAFT {\rm lem:gammarhoclosed}. \fi
Let the scalars $\gamma$, $\varrho$ be from Lemma \ref{lem:recurrent}.
Then
\begin{align}
  \gamma &= - \alpha (q - q^{-1})^2,               \label{eq:gammaclosed}
\\
 \varrho &= \alpha^2 (q-q^{-1})^2 - a a' (q^2-q^{-2})^2.  \label{eq:rhoclosed}
\end{align}
\end{lemma}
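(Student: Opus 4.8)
The plan is to exploit the fact that, by Lemma \ref{lem:recurrent}, both $\gamma$ and $\varrho$ are independent of $i$ over their stated ranges. Hence I may substitute the closed form \eqref{eq:thiclosed} into the defining expressions \eqref{eq:gammarec} and \eqref{eq:rhorec} and simply read off the common value, with no need to track the index $i$. Throughout I write $\beta = q^2 + q^{-2}$ and abbreviate $u = a q^{2i-d}$, $v = a' q^{d-2i}$, so that $\th_i = \alpha + u + v$, $\th_{i-1} = \alpha + u q^{-2} + v q^2$, $\th_{i+1} = \alpha + u q^2 + v q^{-2}$, and note $uv = a a'$.

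For \eqref{eq:gammaclosed} I substitute into $\gamma = \th_{i-1} - \beta \th_i + \th_{i+1}$. The pairing $\th_{i-1} + \th_{i+1} = 2\alpha + (q^2 + q^{-2})(u + v) = 2\alpha + \beta(u+v)$ cancels the term $\beta(u+v)$ coming from $\beta\th_i$, leaving $\gamma = 2\alpha - \beta\alpha = \alpha(2 - \beta)$. Since $2 - \beta = -(q - q^{-1})^2$, this gives \eqref{eq:gammaclosed}.

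For \eqref{eq:rhoclosed} I substitute the closed form together with the value of $\gamma$ just found into \eqref{eq:rhorec}. Writing $P = u + v$ and $Q = u q^{-2} + v q^2$, so that $\th_i = \alpha + P$ and $\th_{i-1} = \alpha + Q$, I expand $\th_{i-1}^2 - \beta \th_{i-1}\th_i + \th_i^2$ and subtract $\gamma(\th_{i-1} + \th_i) = \alpha(2-\beta)(2\alpha + P + Q)$. I expect the part linear in $\alpha$ to cancel entirely and the pure $\alpha^2$ part to collapse to $\alpha^2(\beta - 2) = \alpha^2(q - q^{-1})^2$, which matches the first term of \eqref{eq:rhoclosed}.

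The main obstacle is the remaining bilinear part $P^2 + Q^2 - \beta PQ$, which must equal $-aa'(q^2 - q^{-2})^2$. Direct expansion shows that the coefficients of $u^2$ and $v^2$ cancel between $P^2 + Q^2$ and $\beta PQ$, leaving only the cross term $uv\,[\,4 - (q^2 + q^{-2})^2\,]$. The computation then closes upon recognizing the identity $4 - (q^2 + q^{-2})^2 = -(q^2 - q^{-2})^2$ and using $uv = aa'$. This cancellation of the $u^2$ and $v^2$ terms and the final square identity are the only delicate points; everything else is routine bookkeeping.
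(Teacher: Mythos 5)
Your computation is correct and is exactly the "routine verification" the paper leaves to the reader: substituting the closed form \eqref{eq:thiclosed} into \eqref{eq:gammarec} and \eqref{eq:rhorec} and simplifying via $2-\beta = -(q-q^{-1})^2$ and $4-(q^2+q^{-2})^2 = -(q^2-q^{-2})^2$. All the claimed cancellations (the $\alpha(P+Q)$ terms, and the $u^2$, $v^2$ terms in $P^2+Q^2-\beta PQ$) check out, so this is a valid, fully written-out version of the paper's proof.
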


\begin{proof}
Routine verification.
\end{proof}

\begin{lemma}   \label{lem:standardorder}    \samepage
\ifDRAFT {\rm lem:standardorder}. \fi
Let the scalars  $\gamma$, $\varrho$ be from Lemma \ref{lem:recurrent},
and let
$\{\tilde{\th}_i\}_{i=0}^d$ be a reordering of $\{\th_i\}_{i=0}^d$ that satisfies both
\begin{align}
 \gamma &= \tilde{\th}_{i-1} - \beta \tilde{\th}_i + \tilde{\th}_{i+1}   
           && (1 \leq i \leq d-1),                                      \label{eq:recgamma2}
\\
 \varrho &=  \tilde{\th}_{0}^2 - \beta \tilde{\th}_{0} \tilde{\th}_{1} + \tilde{\th}_{1}^2
                 - \gamma (\tilde{\th}_{0} + \tilde{\th}_{1}).       \label{eq:recrho2}
\end{align}
Then the sequence $\{\tilde{\th}_i\}_{i=0}^d$ coincides with either
$\{\th_i\}_{i=0}^d$ or $\{\th_{d-i}\}_{i=0}^d$.
\end{lemma}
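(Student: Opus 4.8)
The plan is to reduce everything to the closed forms \eqref{eq:thiclosed} and then match coefficients. First I would observe that the reordered sequence $\{\tilde{\th}_i\}_{i=0}^d$ is again $\beta$-recurrent: the hypothesis \eqref{eq:recgamma2} is exactly the assumption of Lemma \ref{lem:betarec} applied to $\{\tilde{\th}_i\}_{i=0}^d$ (with the same $\beta$ and $\gamma$), so that lemma applies. Hence by \cite[Lemma 9.2]{T:Leonard} there exist scalars $\tilde{\alpha}$, $\tilde{a}$, $\tilde{a}'$ with $\tilde{\th}_i = \tilde{\alpha} + \tilde{a}q^{2i-d} + \tilde{a}'q^{d-2i}$ for $0 \le i \le d$. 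The goal then becomes to prove $\tilde{\alpha} = \alpha$ and $\{\tilde{a},\tilde{a}'\} = \{a,a'\}$: the alternative $(\tilde{a},\tilde{a}') = (a,a')$ gives $\tilde{\th}_i = \th_i$, while $(\tilde{a},\tilde{a}') = (a',a)$ gives $\tilde{\th}_i = \th_{d-i}$, which is precisely the desired conclusion.

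Next I would extract two scalar identities. Applying Lemma \ref{lem:gammarhoclosed} to each sequence and using that both obey \eqref{eq:recgamma2} with the \emph{same} $\gamma$, I obtain $-\alpha(q-q^{-1})^2 = \gamma = -\tilde{\alpha}(q-q^{-1})^2$; since $q$ is not a root of unity we have $q \neq \pm 1$, so $(q-q^{-1})^2 \neq 0$ and therefore $\alpha = \tilde{\alpha}$. For the second identity I would first promote \eqref{eq:recrho2} from the single index $i=1$ to all indices: because $\{\tilde{\th}_i\}_{i=0}^d$ is $\beta$-recurrent, Lemma \ref{lem:recurrent}(ii) furnishes a scalar $\varrho'$ equal to the quadratic expression for all $1 \le i \le d$, and comparing its value at $i=1$ with \eqref{eq:recrho2} forces $\varrho' = \varrho$. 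Then Lemma \ref{lem:gammarhoclosed} gives $\alpha^2(q-q^{-1})^2 - aa'(q^2-q^{-2})^2 = \varrho = \tilde{\alpha}^2(q-q^{-1})^2 - \tilde{a}\tilde{a}'(q^2-q^{-2})^2$; using $\alpha = \tilde{\alpha}$ and $q^2-q^{-2} \neq 0$ (as $q^4 \neq 1$) this collapses to $aa' = \tilde{a}\tilde{a}'$.

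The one hypothesis not yet used is that $\{\tilde{\th}_i\}_{i=0}^d$ is a \emph{reordering} of $\{\th_i\}_{i=0}^d$, and this is what pins down $\{\tilde{a},\tilde{a}'\}$. The cleanest way to exploit it is to compare the order-independent sums. Since $\sum_{i=0}^d \tilde{\th}_i = \sum_{i=0}^d \th_i$, substituting the closed forms and cancelling $(d+1)\alpha$ yields $(\tilde{a}+\tilde{a}')\Sigma = (a+a')\Sigma$, where $\Sigma = \sum_{i=0}^d q^{2i-d} = q^{-d}(q^{2(d+1)}-1)(q^2-1)^{-1}$. As $q$ is not a root of unity, $q^{2(d+1)} \neq 1$, so $\Sigma \neq 0$ and hence $a+a' = \tilde{a}+\tilde{a}'$. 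Combined with $aa' = \tilde{a}\tilde{a}'$, Vieta's formulas show that $\{a,a'\}$ and $\{\tilde{a},\tilde{a}'\}$ are the roots of one and the same monic quadratic, so $\{\tilde{a},\tilde{a}'\} = \{a,a'\}$, and the two cases identified above complete the argument.

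I expect the main obstacle to be the bookkeeping in the middle step rather than any hard computation. One must verify carefully that the scalars $\gamma$ and $\varrho$ attached to the reordered sequence by Lemma \ref{lem:recurrent} really coincide with the given $\gamma$, $\varrho$ — in particular that the single instance \eqref{eq:recrho2} at $i=1$ genuinely determines $\varrho$ for the whole reordered sequence, which rests on its $\beta$-recurrence. Once these identifications are secured, the decisive idea is simply that a reordering preserves the sum, converting the permutation hypothesis into the linear relation $a+a' = \tilde{a}+\tilde{a}'$ that, together with $aa' = \tilde{a}\tilde{a}'$, determines the unordered pair $\{a,a'\}$.
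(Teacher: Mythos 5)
Your proof is correct and follows essentially the same route as the paper's: reduce both sequences to the closed form via Lemma \ref{lem:betarec} and \cite[Lemma 9.2]{T:Leonard}, deduce $\tilde{\alpha}=\alpha$ and $aa'=\tilde{a}\tilde{a}'$ from the shared $\gamma$ and $\varrho$ via Lemma \ref{lem:gammarhoclosed}, and then use the permutation hypothesis to conclude $\{\tilde{a},\tilde{a}'\}=\{a,a'\}$. The paper leaves that last step as ``routine''; your sum-plus-Vieta argument (valid because $\sum_{i=0}^d q^{2i-d}\neq 0$ when $q$ is not a root of unity) is a clean and correct way to carry it out.
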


\begin{proof}
Note that the sequence $\{\tilde{\th}_i\}_{i=0}^d$ is $\beta$-recurrent by \eqref{eq:recgamma2}
and Lemma \ref{lem:betarec}.
By this and \cite[Lemma 9.2]{T:Leonard} there exist scalars 
$\tilde{\alpha}$, $\tilde{a}$, $\tilde{a}'$ such that
\begin{align}     \label{eq:tildethiclosed}
  \tilde{\th}_i &= \tilde{\alpha} + \tilde{a} q^{2i-d} + \tilde{a}' q^{d-2i}
             && (0 \leq i \leq d).
\end{align}
By \eqref{eq:gammaclosed} and \eqref{eq:recgamma2} for $i=1$
one finds $\tilde{\alpha} = \alpha$.
By this and \eqref{eq:gammaclosed}, \eqref{eq:rhoclosed}, \eqref{eq:recrho2}, 
one finds $a a' =  \tilde{a} \tilde{a}'$.
Using these comments and 
the assumption that $\{\tilde{\th}_i\}_{i=0}^d$ is a permutation of 
$\{\th_i\}_{i=0}^d$,
one routinely finds that either (i) $\tilde{a} = a$, $\tilde{a}'=a'$
or (ii) $\tilde{a}=a'$, $\tilde{a}' = a$.
The result follows.
\end{proof}

\section{Parameter arrays}
\label{sec:parray}

In this section we recall some materials concerning Leonard pairs.

We first recall the notion of an isomorphism of Leonard pairs.
Consider a vector space $V'$ over $\F$ that has dimension $d+1$.
For a Leonard pairs $A,A^*$ on $V$ and a Leonard pair $B,B^*$ on $V'$,
by an {\em isomorphism of Leonard pairs} from $A,A^*$ to $B,B^*$
we mean a linear bijection $\sigma: V \to V'$ such that
both $\sigma A = B \sigma$ and $\sigma A^* = B^* \sigma$.
We say that the Leonard pairs $A,A^*$ and $B,B^*$ are {\em isomorphic} whenever 
there exists an isomorphism of Leonard pairs from $A,A^*$ to $B,B^*$.

Next we recall some facts concerning a parameter array of a Leonard pair.

\begin{definition}   \label{def:parray}
\ifDRAFT {\rm def:parray}. \fi
By a {\em parameter array over $\F$} we mean a sequence \eqref{eq:parray}
consisting of scalars in $\F$ that satisfy {\rm (i)--(v)} below:
\begin{itemize}
\item[\rm (i)]
$\th_i \neq \th_j$, $\;\;\; \th^*_i \neq \th^*_j \;\;\;$ if $\; i \neq j$ $\quad (0 \leq i,j \leq d)$.
\item[\rm (ii)]
$\vphi_i \neq 0$, $\;\;\; \phi_i \neq 0$  $\quad (1 \leq i \leq d)$.
\item[\rm (iii)]
$\vphi_i = \phi_1 \sum_{h=0}^{i-1} \frac{\th_h - \th_{d-h} } {\th_0 - \th_d} 
       + (\th^*_i - \th^*_0)(\th_{i-1} - \th_d)$
$\qquad (1 \leq i \leq d)$.
\item[\rm (iv)]
$\phi_i = \vphi_1 \sum_{h=0}^{i-1} \frac{\th_h - \th_{d-h} } {\th_0 - \th_d} 
       + (\th^*_i - \th^*_0)(\th_{d-i+1} - \th_0)$
$\qquad (1 \leq i \leq d)$.
\item[\rm (v)]
The expressions
\begin{align}   \label{eq:indep}
 & \frac{\th_{i-2} - \th_{i+1}}{\th_{i-1}-\th_i},
 &&  \frac{\th^*_{i-2} - \th^*_{i+1}}{\th^*_{i-1}-\th^*_i}
\end{align}
are equal and independent of $i$ for $2 \leq i \leq d-1$.
\end{itemize}
\end{definition}

\begin{lemma}  {\rm \cite[Theorem 1.9]{T:Leonard} } \label{lem:characterize}
\ifDRAFT {\rm lem:characterize}. \fi
Consider sequences of scalars
$\{\th_i\}_{i=0}^d$, $\{\th^*_i\}_{i=0}^d$, $\{\vphi_i\}_{i=1}^d$, $\{\phi_i\}_{i=1}^d$.
Let $A : V \to V$ and $A^* : V \to V$ be linear transformations
that are represented as in \eqref{eq:splitAAs} with respect to some basis for $V$.
Then the following {\rm (i)} and {\rm (ii)} are equivalent:
\begin{itemize}
\item[\rm (i)]
The pair $A,A^*$ is a Leonard pair with parameter array \eqref{eq:parray}.
\item[\rm (ii)]
The sequence \eqref{eq:parray} is a parameter array over $\F$.
\end{itemize}
Suppose {\rm (i)} and {\rm (ii)} hold above.
Then $A,A^*$ is unique up to isomorphism of Leonard pairs.
\end{lemma}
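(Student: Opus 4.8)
The plan is to establish the equivalence of (i) and (ii) by treating each implication separately, and to obtain the uniqueness clause as a byproduct of the rigidity of the split form. For uniqueness, the key observation is that the two matrices in \eqref{eq:splitAAs} are completely determined by the data $\{\th_i\}_{i=0}^d$, $\{\th^*_i\}_{i=0}^d$, $\{\vphi_i\}_{i=1}^d$: the matrix for $A$ is the lower bidiagonal matrix with diagonal $\{\th_i\}$ and all subdiagonal entries $1$, and the matrix for $A^*$ is the upper bidiagonal matrix with diagonal $\{\th^*_i\}$ and superdiagonal $\{\vphi_i\}$. Hence, once Lemma \ref{lem:split} guarantees that every Leonard pair with parameter array \eqref{eq:parray} can be placed in the split form \eqref{eq:splitAAs} with precisely these entries, any two such pairs become equal in split form and are therefore isomorphic via the change of basis to split form.

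For the implication (i) $\Rightarrow$ (ii), I would read off most axioms from known structural facts. Condition (i) of Definition \ref{def:parray} is \cite[Lemma 1.3]{T:Leonard}, since the diagonal entries $\th_i$, $\th^*_i$ of the split form are the eigenvalues of $A$, $A^*$. Condition (ii) is the nonvanishing asserted in Lemma \ref{lem:split}, applied both to the first split sequence $\{\vphi_i\}$ and, after reversing the ordering of the eigenvalues of $A$, to the second split sequence $\{\phi_i\}$. Condition (v) is the independence displayed in \eqref{eq:indep0}, which reflects the recurrent structure of the eigenvalue sequences. The substantive points are the relations (iii) and (iv), which I would derive by analyzing the split decomposition $V = U_0 + U_1 + \cdots + U_d$ attached to the basis $\{u_i\}$ of Lemma \ref{lem:split}, where $(A - \th_i I)U_i \subseteq U_{i+1}$ and $(A^* - \th^*_i I)U_i \subseteq U_{i-1}$; evaluating the composite action of suitable products of $(A - \th_j I)$ and $(A^* - \th^*_j I)$ along the flag determined by these subspaces, and comparing coefficients, yields the telescoping identities in (iii) and (iv).

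The implication (ii) $\Rightarrow$ (i) is the hard direction, and here the plan is to verify Definition \ref{def:LP} directly from the split form. Starting from \eqref{eq:splitAAs}, condition (i) of Definition \ref{def:parray} makes $A$ and $A^*$ each diagonalizable with mutually distinct eigenvalues, so each admits an eigenbasis and a system of primitive idempotents $\{E_i\}$, $\{E^*_i\}$. To confirm Definition \ref{def:LP}(ii), I would pass to an eigenbasis of $A$ and show that $A^*$ becomes irreducible tridiagonal: the tridiagonal shape follows from a band argument using the lowering relation $(A^* - \th^*_i I)U_i \subseteq U_{i-1}$ together with the lower bidiagonal shape of $A$, while the off-diagonal entries in position $i$ are computed as ratios whose numerators are products of certain $\vphi_j$ (and $\phi_j$) and whose denominators are products of eigenvalue differences $\th_h - \th_k$. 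Definition \ref{def:LP}(i) is obtained symmetrically by passing to an eigenbasis of $A^*$. The recurrent structure forced by condition (v), together with Lemmas \ref{lem:recurrent}--\ref{lem:standardorder}, guarantees that the relevant orderings are standard and that these denominators never vanish in a way that destroys the band structure.

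The main obstacle is precisely the irreducibility, that is, proving that every off-diagonal entry of the resulting tridiagonal matrices is nonzero. The tridiagonal shape is a soft consequence of the split relations, but the nonvanishing requires the full force of the parameter-array axioms: I expect the product of the $i$-th subdiagonal and superdiagonal entries of $A^*$ in the eigenbasis of $A$ to factor through $\vphi_i$ and $\phi_{d-i+1}$, so that axiom (ii) forces this product to be nonzero, while axioms (iii) and (iv) are exactly the compatibility relations ensuring that the first and second split data glue into a single consistent tridiagonal pattern rather than merely a pair of one-sided bidiagonal forms. Managing this interplay, and in particular checking that condition (v) prevents any spurious cancellation among the eigenvalue differences appearing in the denominators, is the delicate heart of the argument.
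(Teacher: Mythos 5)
This lemma is not proved in the paper at all: it is quoted verbatim from \cite[Theorem~1.9]{T:Leonard} and used as a black box, so there is no in-paper argument to compare yours against. What you have written is an outline of a from-scratch proof of Terwilliger's classification theorem. Your strategy does match the one in the cited reference in broad strokes (split decompositions, explicit entry computations, nonvanishing of the split sequences driving irreducibility), and the uniqueness argument and the direction (i)$\Rightarrow$(ii) are essentially right as sketched. But as a proof the proposal is incomplete at exactly the point you yourself flag as the ``delicate heart.''

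Concretely, in the direction (ii)$\Rightarrow$(i) your ``band argument'' only delivers half of the tridiagonal shape. From the split form \eqref{eq:splitAAs} one gets that the eigenvector $v_j$ of $A$ for $\th_j$ lies in $\mathrm{span}\{u_j,\dots,u_d\}$, and since $A^*$ is upper bidiagonal in the $u$-basis, $A^*v_j\in\mathrm{span}\{v_{j-1},\dots,v_d\}$; this kills the entries above the superdiagonal but says nothing about the entries below the subdiagonal. To kill those you need a \emph{second} basis $\{w_i\}$ in which $A$ is lower bidiagonal with diagonal $\{\th_{d-i}\}$ and $A^*$ is upper bidiagonal with superdiagonal $\{\phi_i\}$, so that $A^*v_j\in\mathrm{span}\{v_0,\dots,v_{j+1}\}$ as well. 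The existence of that second basis with precisely those entries is not given; it has to be \emph{constructed} from the data, and verifying that its superdiagonal entries are the $\phi_i$ of the putative parameter array is exactly where conditions (iii) and (iv) of Definition~\ref{def:parray} enter. Your proposal asserts that (iii) and (iv) ``glue'' the two split forms but does not carry out this construction, nor the entry computation showing that the product of the $(j{+}1,j)$ and $(j,j{+}1)$ entries of $A^*$ in the $v$-basis is a nonzero multiple of the appropriate $\vphi$'s and $\phi$'s. The same work is then needed a second time for Definition~\ref{def:LP}(i) with the roles of $A$ and $A^*$ exchanged. So the approach is the right one, but the decisive steps are described rather than proved; given that the paper treats this as an external citation, the honest course is to do likewise rather than to claim a proof at this level of detail.
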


\begin{lemma}   \label{lem:closed2}   \samepage
\ifDRAFT {\rm lem:closed2}. \fi
Let $A,A^*$ be a Leonard pair on $V$ with parameter array \eqref{eq:parray}.
Assume $d \geq 3$, and let $q$ be a quantum parameter of $A,A^*$.
Let $\alpha$, $\alpha^*$, $a$, $a'$, $b$, $b'$, $\xi$ be scalars that satisfy
\eqref{eq:thi}--\eqref{eq:phi}.
Assume at least two of $aa'$, $bb'$, $\xi$ are nonzero.
Then there exits a nonzero scalar $c$ such that $\xi = - a a' c - b b' c^{-1}$.
Moreover, this scalar satisfies
{\small
\begin{align}
 \vphi_i &= (q^i-q^{-i})(q^{d-i+1}-q^{i-d-1})(b - a' c q^{d-2i+1})(b' - a c q^{2i-d-1})c^{-1}
            && (1 \leq i \leq d),                        \label{eq:vphi2}
\\
 \phi_i &= (q^i-q^{-i})(q^{d-i+1}-q^{i-d-1})(b - a c q^{d-2i+1})(b' - a' c q^{2i-d-1}) c^{-1}
            && (1 \leq i \leq d).                         \label{eq:phi2}
\end{align}
}
\end{lemma}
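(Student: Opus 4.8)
The plan is to split the argument into two parts: first produce the scalar $c$, then verify the two split-sequence formulas by direct substitution. For the first part I would rewrite the desired relation $\xi = -aa'c - bb'c^{-1}$ (with $c \neq 0$) as the equivalent polynomial equation $aa'c^2 + \xi c + bb' = 0$, and argue that it admits a nonzero root; this is exactly where the hypothesis that at least two of $aa'$, $bb'$, $\xi$ are nonzero enters. I would proceed by cases on whether $aa' = 0$. If $aa' \neq 0$, then $\F$ being algebraically closed gives a root, and a root equals $0$ only when $bb' = 0$, in which case $\xi \neq 0$ forces the remaining root $-\xi/(aa')$ to be nonzero; so a nonzero root exists. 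If $aa' = 0$, the hypothesis forces both $bb' \neq 0$ and $\xi \neq 0$, and the equation reduces to a linear one with the unique nonzero solution $c = -bb'/\xi$.

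For the second part I would first record the sign identity $q^{i-d-1} - q^{d-i+1} = -(q^{d-i+1} - q^{i-d-1})$ relating the leading factors in \eqref{eq:vphi} and in the target formula \eqref{eq:vphi2}. Since $q$ is not a root of unity, the factor $(q^i - q^{-i})(q^{d-i+1} - q^{i-d-1})$ is nonzero for $1 \leq i \leq d$, so it suffices to match the remaining factors up to this sign. Expanding $(b - a'c q^{d-2i+1})(b' - ac q^{2i-d-1}) c^{-1}$, the $aa'c^2$ cross term carries $q^{(d-2i+1)+(2i-d-1)} = q^0$ and so contributes the constant $aa'c$, the two mixed terms reproduce $-ab q^{2i-d-1}$ and $-a'b' q^{d-2i+1}$ exactly, and the leading term is $bb'c^{-1}$. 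Hence the product equals $bb'c^{-1} + aa'c - ab q^{2i-d-1} - a'b' q^{d-2i+1}$, which agrees with $-(\xi + ab q^{2i-d-1} + a'b' q^{d-2i+1})$ precisely when $bb'c^{-1} + aa'c = -\xi$, i.e.\ by the defining relation for $c$. This yields \eqref{eq:vphi2}, and the verification of \eqref{eq:phi2} is identical after interchanging the roles of $a$ and $a'$ in the two factors.

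The only step needing genuine care is the existence of a nonzero $c$: the case analysis must invoke the ``at least two nonzero'' hypothesis to exclude the degenerate situation $aa' = bb' = 0$, in which the equation $\xi c = 0$ provides no nonzero root. Everything afterward is bookkeeping, since \eqref{eq:vphi2} and \eqref{eq:phi2} are algebraic identities in $q^{\pm(2i-d-1)}$ whose only nontrivial content is the single scalar equation defining $c$. I therefore expect no real obstacle beyond presenting the case analysis cleanly.
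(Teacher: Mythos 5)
Your proposal is correct and follows essentially the same route as the paper: the paper's proof simply says that such a $c$ exists because $\F$ is algebraically closed and that \eqref{eq:vphi2}, \eqref{eq:phi2} follow by substituting $\xi = -aa'c - bb'c^{-1}$ into \eqref{eq:vphi}, \eqref{eq:phi}. Your case analysis on the quadratic $aa'c^2 + \xi c + bb' = 0$ correctly supplies the detail (left implicit in the paper) of why the ``at least two nonzero'' hypothesis guarantees a \emph{nonzero} root, and your expansion check is accurate.
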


\begin{proof}
Such a scalar $c$ exists since $\F$ is algebraically closed.
To get \eqref{eq:vphi2} and \eqref{eq:phi2}, 
set $\xi = - a a' c - b b' c^{-1}$ in \eqref{eq:vphi} and \eqref{eq:phi}.
\end{proof}

\section{Proof of Propositions \ref{prop:ex1} and \ref{prop:ex1parray}}
\label{sec:proof1}

\begin{proofof}{Propositions \ref{prop:ex1} and \ref{prop:ex1parray}}
Fix a nonzero scalar $q$ that is not a root of unity.
Let $\alpha$, $\alpha^*$, $a$, $a'$, $b$, $b'$, $c$ be scalars
with $c \neq 0$.
Define scalars
$\{\th_i\}_{i=0}^d$, $\{x_i\}_{i=0}^d$, $\{y_i\}_{i=1}^d$, $\{z_i\}_{i=1}^d$ by \eqref{eq:ex1thi}--\eqref{eq:ex1zi},
and define scalars $\{\th^*_i\}_{i=0}^d$, $\{\vphi_i\}_{i=1}^d$, $\{\phi_i\}_{i=1}^d$ by
\eqref{eq:ex1thsi}--\eqref{eq:ex1phii}.
One checks that the conditions Definition \ref{def:parray}(i), (ii) are satisfied
if and only if \eqref{eq:ex1cond1}--\eqref{eq:ex1cond3} hold.
In this case, the conditions Definition \ref{def:parray}(iii)--(v) are satisfied.
Therefore \eqref{eq:parray} is a parameter array over $\F$
if and only if \eqref{eq:ex1cond1}--\eqref{eq:ex1cond3} hold.
Let the matrices $A,A^*$ be from \eqref{eq:LBTD}.
For $0 \leq r \leq d$ define $u_r \in \F^{d+1}$
that has $i$th entry
\begin{align*}
 (u_r)_i &= (-1)^{r+i} c^{i-r} q^{(d+r-i)(d-r+i-1)/2}  
\\
 & \qquad\qquad \times   
      \prod_{h=0}^{d+r-i-1} (q^{d-h} - q^{h-d})          
      \prod_{h=0}^{r-1} (b' - a c q^{2h-d+1})
      \prod_{h=0}^{d-i-1} (b' - a' c q^{2h-d+1})
\end{align*}
for $0 \leq i \leq d$.
One routinely verifies that
\begin{align*}
  A u_r &= \th_r u_r + u_{r+1}  \qquad (0 \leq r \leq d-1),
 & A u_d &= \th_d u_d,
\\
 A^* u_r &= \th^*_r u_r + \vphi_r u_{r-1} \qquad  (1 \leq r \leq d),
 & A u_0 &= \th^*_0 u_0.
\end{align*}
Therefore the matrices representing $A,A^*$ with respect to $\{u_r\}_{r=0}^d$
are as in \eqref{eq:splitAAs}.
By these comments and  Lemma \ref{lem:characterize},
$A,A^*$ is a Leonard pair with parameter array \eqref{eq:parray}
if and only if \eqref{eq:ex1cond1}--\eqref{eq:ex1cond3} hold.
In this case, observe that $y_i z_i \neq 0$ for $1 \leq i \leq d$.
The results follow.
\end{proofof}

\section{The Askey-Wilson relations}
\label{sec:AWrel}

For the rest of the paper we assume $d \geq 3$.
In this section we recall the Askey-Wilson relations for a Leonard pair.
Consider a Leonard pair $A,A^*$ on $V$ with parameter array \eqref{eq:parray}
and fundamental parameter $\beta$.
Note that $\beta$ is well-defined by our assumption $d \geq 3$.
By Lemma \ref{lem:recurrent} there exist scalars $\gamma$, $\gamma^*$, 
$\varrho$, $\varrho^*$ such that
\begin{align}
 \gamma &= \th_{i-1} - \beta \th_{i} + \th_{i+1} 
              && (1 \leq i \leq d-1),                         \label{eq:gamma}
\\
 \gamma^* &= \th^*_{i-1} - \beta \th^*_{i} + \th^*_{i+1} 
              && (1 \leq i \leq d-1),                          \label{eq:gammas}
\\
 \varrho &= \th_{i-1}^2 - \beta \th_{i-1}\th_{i} + \th_{i}^2 
             - \gamma (\th_{i-1}+\th_{i})   && ( 1 \leq i \leq d),     \label{eq:rho}
\\
 \varrho^* &= \th_{i-1}^{*2} - \beta \th^*_{i-1}\th^*_{i} + {\th_{i}^*}^2
                - \gamma^* (\th^*_{i-1}+\th^*_{i})   && (1 \leq i \leq d).     \label{eq:rhos}
\end{align}

\begin{lemma} {\rm \cite[Theorem 1.5]{TV} }  \label{lem:AWrel}   \samepage
\ifDRAFT {\rm lem:AWrel}. \fi
There exist scalars $\omega$, $\eta$, $\eta^*$ such that both
\begin{align}
  A^2 A^* - \beta A A^* A + A^* A^2 - \gamma(AA^* + A^* A) - \varrho A^*
   &= \gamma^* A^2 + \omega A + \eta I,                       \label{eq:AW1}
\\
 {A^*}^2 A - \beta A^* A A^* + A {A^*}^2 - \gamma^* (A^*A + A A^*) - \varrho^* A
  &= \gamma {A^*}^2 + \omega A^* + \eta^* I.                \label{eq:AW2}
\end{align}
The scalars $\omega$, $\eta$, $\eta^*$ are uniquely determined by $A,A^*$.
\end{lemma}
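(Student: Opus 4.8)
The plan is to verify \eqref{eq:AW1} (and then \eqref{eq:AW2} by symmetry) by computing both sides in the split basis $\{u_i\}_{i=0}^d$ of Lemma \ref{lem:split}, in which $A u_i = \th_i u_i + u_{i+1}$ (with $u_{d+1}=0$) and $A^* u_i = \th^*_i u_i + \vphi_i u_{i-1}$ (with $u_{-1}=0$). Write $L$ for the left-hand side of \eqref{eq:AW1} and set $Q = L - \gamma^* A^2$. Since $A$ raises the split index by at most one and $A^*$ lowers it by at most one, each monomial appearing in $L$ sends $u_i$ into $\mathrm{span}(u_{i-1}, u_i, u_{i+1}, u_{i+2})$; the goal is to show that in fact $Q u_i$ lies in $\mathrm{span}(u_i, u_{i+1})$ with the two coefficients having the shape $\omega \th_i + \eta$ and $\omega$ for scalars $\omega, \eta$ not depending on $i$, which is exactly the statement $Q = \omega A + \eta I$.

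First I would record the coefficients of $u_{i-1}$ and $u_{i+2}$. A direct expansion shows that the coefficient of $u_{i-1}$ in $L u_i$ equals $\vphi_i\bigl(\th_{i-1}^2 - \beta \th_{i-1}\th_i + \th_i^2 - \gamma(\th_{i-1}+\th_i) - \varrho\bigr)$, which vanishes by the definition \eqref{eq:rho} of $\varrho$; and the coefficient of $u_{i+2}$ in $L u_i$ equals $\th^*_i - \beta \th^*_{i+1} + \th^*_{i+2}$, which equals $\gamma^*$ by \eqref{eq:gammas}, hence is cancelled exactly by the corresponding coefficient of $\gamma^* A^2$. Thus $Q u_i \in \mathrm{span}(u_i, u_{i+1})$ for all $i$.

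The main obstacle is the remaining two coefficients. I would compute the coefficient of $u_{i+1}$ in $Q u_i$ and show it is independent of $i$; this is where the full strength of the parameter array is used, since the coefficient is an explicit expression in $\th_i, \th_{i+1}, \th^*_i, \th^*_{i+1}, \vphi_i, \vphi_{i+1}, \vphi_{i+2}$, and its $i$-independence follows only after substituting the three-term relations for $\vphi_i$ furnished by Definition \ref{def:parray}(iii),(iv) together with the recurrences \eqref{eq:gamma}--\eqref{eq:rhos}. Call the resulting constant $\omega$. Then $(Q - \omega A)u_i$ is a scalar multiple of $u_i$ (the $u_{i+1}$-terms cancel because $A u_i = \th_i u_i + u_{i+1}$), and I would check that this scalar, namely the coefficient of $u_i$ in $Q u_i$ minus $\omega \th_i$, is again independent of $i$; calling it $\eta$ gives $Q = \omega A + \eta I$, which is \eqref{eq:AW1}.

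Finally, \eqref{eq:AW2} is obtained by the same argument applied to the Leonard pair $A^*, A$, which has the same fundamental parameter $\beta$ and interchanges the roles of $(\gamma, \varrho)$ and $(\gamma^*, \varrho^*)$; the delicate point is that the scalar $\omega$ produced there coincides with the one in \eqref{eq:AW1}, which I would confirm by exhibiting a common closed form for $\omega$ that is symmetric under the exchange $A \leftrightarrow A^*$. Uniqueness of $\omega, \eta, \eta^*$ is then immediate: since $A$ has $d+1 \ge 4$ distinct eigenvalues, the matrices $I, A, A^2$ are linearly independent, so the right-hand side of \eqref{eq:AW1} determines $\gamma^*, \omega, \eta$; likewise $I, A^*, {A^*}^2$ are independent and \eqref{eq:AW2} determines $\eta^*$.
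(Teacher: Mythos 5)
The paper itself offers no proof of this lemma; it is quoted verbatim from \cite[Theorem 1.5]{TV}, so there is nothing internal to compare against. Your reconstruction via the split basis is the right kind of argument, and the parts you actually carry out are correct: the coefficient of $u_{i-1}$ in $Lu_i$ is indeed $\vphi_i$ times the expression in \eqref{eq:rho} and hence vanishes, and the coefficient of $u_{i+2}$ is $\th^*_i-\beta\th^*_{i+1}+\th^*_{i+2}=\gamma^*$, cancelling against $\gamma^*A^2$. The uniqueness argument is also fine, since $d\geq 3$ makes $I,A,A^2$ linearly independent.

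However, the two steps that constitute essentially all of the content are asserted rather than performed. First, the $i$-independence of the $u_{i+1}$-coefficient of $Qu_i$ (which works out to $\vphi_i-\beta\vphi_{i+1}+\vphi_{i+2}$ plus an explicit expression in the eigenvalues) and of the residual $u_i$-coefficient is exactly the nontrivial computation, and it must be checked including the boundary indices $i\in\{0,d-1,d\}$, where the conventions $\vphi_0=\vphi_{d+1}=\vphi_{d+2}=0$ are in force and a generic three-term recurrence for the $\vphi_i$ is not enough --- this is precisely where the global sum formulas in Definition \ref{def:parray}(iii),(iv) enter. You can largely bypass this computation by combining your split-basis observation with a second one: in the eigenbasis of $A$ in which $A^*$ is irreducible tridiagonal, the $(i,j)$-entry of $L$ is $\bigl(\th_i^2-\beta\th_i\th_j+\th_j^2-\gamma(\th_i+\th_j)-\varrho\bigr)A^*_{ij}$, which vanishes off the diagonal by \eqref{eq:rho}; hence $L$ is a polynomial in $A$, and your split-basis fact $Lu_0\in\mathrm{span}(u_0,u_1,u_2)$ bounds its degree by $2$, giving \eqref{eq:AW1} with no further identities needed. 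Second, and more seriously, the coincidence of the scalar $\omega$ between \eqref{eq:AW1} and \eqref{eq:AW2} is the genuinely delicate point; saying you ``would confirm it by exhibiting a common closed form symmetric under $A\leftrightarrow A^*$'' names the task without doing it. The required closed forms are the two expressions \eqref{eq:omega} and their dual (quoted in Lemma \ref{lem:omega} from \cite[Theorem 5.3]{TV}), and verifying that they agree is a nontrivial identity in the parameter array that your proposal leaves open.
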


The relations \eqref{eq:AW1} and \eqref{eq:AW2} are known as the {\em Askey-Wilson relations}.
Below we describe the scalars $\omega$, $\eta$, $\eta^*$.
Define scalars $\{a_i\}_{i=0}^d$ and $\{a^*_i\}_{i=0}^d$ by
\begin{align*}
  a_i  &= \th_i + \frac{\vphi_i}{\th^*_{i}-\th^*_{i-1}} + \frac{\vphi_{i+1}}{\th^*_{i}-\th^*_{i+1}}
   \qquad\qquad (1 \leq i \leq d-1),
\\
  a_0 &= \th_0 + \frac{\vphi_1}{\th^*_0 - \th^*_1},
  \qquad\qquad  a_d = \th_d + \frac{\vphi_d}{\th^*_d - \th^*_{d-1}},
\\
  a^*_i  &= \th^*_i + \frac{\vphi_i}{\th_{i}-\th_{i-1}} + \frac{\vphi_{i+1}}{\th_{i}-\th_{i+1}}
  \qquad\qquad (1 \leq i \leq d-1),
\\
 a^*_0 &= \th^*_0 + \frac{\vphi_1}{\th_0 - \th_1},
  \qquad\qquad a^*_d = \th^*_d + \frac{\vphi_d}{\th_d - \th_{d-1}}.
\end{align*}
For notational convenience,
define $\th_{-1}$, $\th_{d+1}$ (resp.\ $\th^*_{-1}$, $\th^*_{d+1}$)
so that \eqref{eq:gamma} (resp.\ \eqref{eq:gammas}) holds for $i=0$ and $i=d$.
Let the scalars $\omega$, $\eta$, $\eta^*$ be from Lemma \ref{lem:AWrel}.

\begin{lemma} {\rm \cite[Theorem 5.3]{TV} }  \label{lem:omega}              \samepage
\ifDRAFT {\rm lem:omega}. \fi
With the above notation,
\begin{align}
  \omega &= a^*_i (\th_i-\th_{i+1}) + a^*_{i-1}(\th_{i-1}-\th_{i-2}) - \gamma^* (\th_{i-1}+\th_i)
               && (1 \leq i \leq d),                             \label{eq:omega}
\\
  \eta &= a^*_i (\th_i-\th_{i-1})(\th_i-\th_{i+1}) - \gamma^* \th_i^2 - \omega \th_i
               && (0 \leq i \leq d),                             \label{eq:eta}
\\
 \eta^* &= a_i (\th^*_i-\th^*_{i-1})(\th^*_i -\th^*_{i+1}) - \gamma {\th^*_i}^2 - \omega \th^*_i
               && (0 \leq i \leq d).                             \label{eq:etas}
\end{align}
\end{lemma}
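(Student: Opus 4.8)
The plan is to evaluate each Askey--Wilson relation on an eigenbasis for the relevant matrix, where the operator identity collapses into a system of scalar identities, one per coordinate. The one genuinely imported ingredient is the standard fact (underlying \cite{T:Leonard,TV}) that the scalar $a^*_i$ defined just before the lemma is exactly the $(i,i)$-entry of the matrix representing $A^*$ with respect to an eigenbasis $\{e_i\}_{i=0}^d$ of $A$, and dually that $a_i$ is the $(i,i)$-entry of $A$ with respect to an eigenbasis of $A^*$.

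First I would fix the eigenbasis $\{e_i\}$ of $A$, so that $A=\mathrm{diag}(\th_0,\dots,\th_d)$ and $A^*$ is irreducible tridiagonal with diagonal entries $a^*_i$. Since $A$ is diagonal, for any matrix $M$ the operator $\mathcal L(M)=A^2M-\beta AMA+MA^2-\gamma(AM+MA)-\varrho M$ has entries
\begin{align*}
\mathcal L(M)_{ij}=M_{ij}\bigl(\th_i^2-\beta\th_i\th_j+\th_j^2-\gamma(\th_i+\th_j)-\varrho\bigr).
\end{align*}
Applying this with $M=A^*$, the left side of \eqref{eq:AW1} equals the diagonal matrix $\gamma^*A^2+\omega A+\eta I$; the subdiagonal and superdiagonal equations hold automatically because the bracket vanishes there by \eqref{eq:rho}, while the $(i,i)$-equation reads
\begin{align*}
\gamma^*\th_i^2+\omega\th_i+\eta=a^*_i\bigl((2-\beta)\th_i^2-2\gamma\th_i-\varrho\bigr).
\end{align*}

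The next step is the purely algebraic identity
\begin{align*}
(2-\beta)\th_i^2-2\gamma\th_i-\varrho=(\th_i-\th_{i-1})(\th_i-\th_{i+1}),
\end{align*}
which I would verify from \eqref{eq:gamma} and \eqref{eq:rho}: expanding the right side and using $\th_{i-1}+\th_{i+1}=\beta\th_i+\gamma$ reduces the claim to $\th_{i-1}\th_{i+1}=\th_i^2-\gamma\th_i-\varrho$, which is precisely \eqref{eq:rho} after eliminating $\th_{i+1}$. At the boundary indices $i=0,d$ one instead eliminates using the conventions defining $\th_{-1},\th_{d+1}$ together with \eqref{eq:rho} at $i=1$ and $i=d$. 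Substituting the identity yields \eqref{eq:eta} for $0\le i\le d$. To obtain \eqref{eq:omega} I would subtract the instance of \eqref{eq:eta} at index $i-1$ from the one at index $i$; both sides factor through $\th_i-\th_{i-1}$, and cancelling this nonzero factor leaves exactly \eqref{eq:omega}, which in passing shows its right side is $i$-independent. Finally, \eqref{eq:etas} follows by the symmetry exchanging $A\leftrightarrow A^*$, $\gamma\leftrightarrow\gamma^*$, $\varrho\leftrightarrow\varrho^*$, $a_i\leftrightarrow a^*_i$: running the same computation on \eqref{eq:AW2} in an eigenbasis of $A^*$ gives $\gamma\th_i^{*2}+\omega\th^*_i+\eta^*=a_i(\th^*_i-\th^*_{i-1})(\th^*_i-\th^*_{i+1})$.

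I expect the substantive point to be the identification of $a^*_i$ and $a_i$ as the diagonal entries of the tridiagonal representations: granting it, the remainder is a short linear computation, whereas proving it from scratch would require the transition between the split basis of Lemma~\ref{lem:split} and the eigenbases. The only other place needing care is the boundary bookkeeping for the quadratic identity at $i=0$ and $i=d$, where \eqref{eq:rho} is available only for $1\le i\le d$ and the endpoint conventions must be invoked.
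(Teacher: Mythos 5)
The paper gives no proof of this lemma at all---it is quoted verbatim from \cite[Theorem 5.3]{TV}---so the only question is whether your reconstruction is sound, and it is: in a standard eigenbasis of $A$ the left side of \eqref{eq:AW1} has $(i,j)$-entry $A^*_{ij}(\th_i^2-\beta\th_i\th_j+\th_j^2-\gamma(\th_i+\th_j)-\varrho)$, the off-diagonal equations are vacuous by \eqref{eq:rho}, the diagonal equation combined with the identity $(2-\beta)\th_i^2-2\gamma\th_i-\varrho=(\th_i-\th_{i-1})(\th_i-\th_{i+1})$ (which does follow from \eqref{eq:gamma} and \eqref{eq:rho}, with the stated endpoint conventions at $i=0,d$) gives \eqref{eq:eta}, first differences divided by $\th_i-\th_{i-1}\neq 0$ give \eqref{eq:omega}, and the dual computation on \eqref{eq:AW2} gives \eqref{eq:etas}. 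This is essentially the argument of \cite{TV}; the single fact you import without proof---that $a^*_i$ and $a_i$ are the diagonal entries of $A^*$ and $A$ in the respective standard eigenbases---is standard (see \cite{T:Leonard,TV}) and is reasonable to cite rather than rederive.
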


Let $q$ be a quantum parameter of $A,A^*$, and assume $q$ is not a root of unity.
Let $\alpha$, $\alpha^*$, $a$, $a'$, $b$, $b'$, $\xi$ be
be scalars that satisfy \eqref{eq:thi}--\eqref{eq:phi}.

\begin{lemma}   \label{lem:scalars}   \samepage
\ifDRAFT {\rm lem:scalars}. \fi
With the above notation, 
assume $\alpha=0$ and $\alpha^*=0$. Then
\begin{align}
\gamma &= 0,  \qquad\qquad \gamma^*=0,       \label{eq:cgamma}
\\
  \varrho &= - a a' (q^2-q^{-2})^2,                    \label{eq:crho}
\\
  \varrho^* &= - b b' (q^2-q^{-2})^2,                \label{eq:crhos}
\\
 \omega &= (q-q^{-1})^2 \big( (q^{d+1} + q^{-d-1}) \xi - (a+a')(b+b') \big),  \label{eq:comega}
\\
 \eta &= - (q-q^{-1})(q^2-q^{-2}) \big( (a+a')\xi - a a' (b+b')(q^{d+1}+q^{-d-1}) \big), \label{eq:ceta}
\\
 \eta^* &=  - (q-q^{-1})(q^2-q^{-2}) \big( (b+b')\xi - b b' (a+a')(q^{d+1}+q^{-d-1}) \big). \label{eq:cetas}
\end{align}
\end{lemma}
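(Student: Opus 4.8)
The plan is to reduce every scalar to the closed forms \eqref{eq:thi}, \eqref{eq:thsi} (with $\alpha=\alpha^*=0$) together with the expressions \eqref{eq:vphi}, and to treat the seven scalars in three groups. The scalars $\gamma,\gamma^*,\varrho,\varrho^*$ come directly from Lemma~\ref{lem:gammarhoclosed}. Applying that lemma to the $\beta$-recurrent sequence $\{\th_i\}_{i=0}^d$ and setting $\alpha=0$ gives $\gamma=0$ and $\varrho=-aa'(q^2-q^{-2})^2$; applying it to $\{\th^*_i\}_{i=0}^d$ and setting $\alpha^*=0$ gives $\gamma^*=0$ and $\varrho^*=-bb'(q^2-q^{-2})^2$. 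This settles \eqref{eq:cgamma}, \eqref{eq:crho}, \eqref{eq:crhos}, and in particular the coefficient formulas of Lemma~\ref{lem:omega} simplify since $\gamma=\gamma^*=0$.

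For $\omega$ and $\eta$ I would use \eqref{eq:eta}, which with $\gamma^*=0$ reads $\eta=a^*_i(\th_i-\th_{i-1})(\th_i-\th_{i+1})-\omega\,\th_i$, an identity that holds and is independent of $i$ for $0\le i\le d$. Clearing the two denominators in the definition of $a^*_i$, put
\[
  G(i)=\th^*_i(\th_i-\th_{i-1})(\th_i-\th_{i+1})+\vphi_i(\th_i-\th_{i+1})+\vphi_{i+1}(\th_i-\th_{i-1}),
\]
so that $G(i)=\eta+\omega\,\th_i$ for all $i$. I would then substitute the closed forms and write $t=q^{2i}$, so that $\th_i=aq^{-d}t+a'q^d t^{-1}$, each difference $\th_i-\th_{i\mp1}$ becomes $(q-q^{-1})$ times a two-term Laurent polynomial in $t$, and, using $(q^i-q^{-i})(q^{i-d-1}-q^{d-i+1})=q^{-d-1}t+q^{d+1}t^{-1}-(q^{d+1}+q^{-d-1})$, both $\vphi_i$ and $\vphi_{i+1}$ become explicit Laurent polynomials in $t$. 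Expanding $G$ produces a Laurent polynomial supported on $t^{-2},\dots,t^{2}$; comparing it with the affine form $\eta+\omega(aq^{-d}t+a'q^d t^{-1})$ forces the $t^{\pm2}$ coefficients to cancel, after which $\omega$ is read off from the $t^{\pm1}$ coefficients and $\eta$ from the constant term, giving \eqref{eq:comega} and \eqref{eq:ceta}. To avoid dividing by $a$ or $a'$ it is cleanest to insert the claimed values of $\omega$ and $\eta$ and verify that $\eta+\omega\,\th_i$ reproduces all five coefficients of $G$. This expansion is the only genuine labor, and it is the step I expect to be the main obstacle—not conceptually but as bookkeeping—so I would collect the coefficients of $t^{2},t,t^{0},t^{-1},t^{-2}$ one at a time rather than expanding $G$ in one pass, and confirm that the extraneous $t^{\pm2}$ terms vanish.

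Finally, for $\eta^*$ I would invoke symmetry rather than repeat the computation. Formula \eqref{eq:etas} is obtained from \eqref{eq:eta} by interchanging $\th\leftrightarrow\th^*$ and $a_i\leftrightarrow a^*_i$ (now with $\gamma$ playing the role of $\gamma^*$), while $\omega$ and each $\vphi_i$ stay fixed. By \eqref{eq:vphi} the split sequence is symmetric under $(a,a')\leftrightarrow(b,b')$, and by \eqref{eq:thi}, \eqref{eq:thsi} the exchange $\th\leftrightarrow\th^*$ is precisely $(a,a')\leftrightarrow(b,b')$; hence the computation of the previous paragraph applies verbatim with $a,a'$ and $b,b'$ interchanged, turning \eqref{eq:ceta} into \eqref{eq:cetas}.
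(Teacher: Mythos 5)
Your proposal is correct and matches the paper's proof, which disposes of \eqref{eq:cgamma}--\eqref{eq:crhos} via Lemma \ref{lem:gammarhoclosed} exactly as you do and declares \eqref{eq:comega}--\eqref{eq:cetas} ``routinely verified''; your coefficient comparison in $t=q^{2i}$ via Lemma \ref{lem:omega}, plus the $(a,a')\leftrightarrow(b,b')$ symmetry for $\eta^*$, is a reasonable way to carry out that routine verification. One small bookkeeping correction: $G(i)$ is a priori supported on $t^{-3},\dots,t^{3}$ (each of its three summands is a product involving three factors supported on $\{t,1,t^{-1}\}$ or $\{t^{2},\dots,t^{-2}\}$), so the $t^{\pm3}$ coefficients must also be checked to cancel, not just $t^{\pm2}$.
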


\begin{proof}
The lines \eqref{eq:cgamma}--\eqref{eq:crhos} follows from Lemma \ref{lem:gammarhoclosed}.
The lines \eqref{eq:comega}--\eqref{eq:cetas} are routinely verified. 
\end{proof}

\section{Evaluating the Askey-Wilson relations}
\label{sec:useAWrel}

Let $\{\th_i\}_{i=0}^d$, $\{x_i\}_{i=0}^d$, $\{y_i\}_{i=1}^d$, $\{z_i\}_{i=1}^d$ be scalars
such that  $y_i z_i \neq 0$ for $1 \leq i \leq d$.
Consider the matrices $A,A^*$ from \eqref{eq:LBTD}, and assume
$A,A^*$ is a Leonard pair in $\Mat_{d+1}(\F)$
with fundamental parameter $\beta$ and quantum parameter $q$
that is not a root of unity.
In this section we evaluate the Askey-Wilson relations to obtain
some relations between the entries of $A$ and $A^*$.
For each matrix in $\Mat_{d+1}(\F)$ we index the rows and columns by
$0,1,\ldots,d$.

\begin{lemma}    \label{lem:standard}            \samepage
\ifDRAFT {\rm lem:standard}. \fi
With the above notation,
$\{\th_i\}_{i=0}^d$ is a standard ordering of the eigenvalues of $A$.
\end{lemma}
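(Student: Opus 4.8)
The plan is to extract the defining recurrence of a standard ordering from the Askey--Wilson relation \eqref{eq:AW1}, and then quote the characterization of standard orderings proved in Section \ref{sec:rec}. First I would record the elementary observation that, since $A$ is lower bidiagonal (hence lower triangular), its eigenvalues are exactly the diagonal entries $\th_0,\th_1,\ldots,\th_d$ listed in that order; these are mutually distinct because $A$ belongs to a Leonard pair \cite[Lemma 1.3]{T:Leonard}. Thus $\{\th_i\}_{i=0}^d$ is at least \emph{some} ordering of the eigenvalues of $A$, and the whole task is to show that this ordering is standard.

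The heart of the argument is to read off matrix entries of the identity \eqref{eq:AW1}. Its right-hand side $\gamma^* A^2 + \omega A + \eta I$ is a polynomial of degree $\le 2$ in the lower-triangular matrix $A$, hence is itself lower triangular; in particular its $(i,i+1)$ entry is $0$, so the $(i,i+1)$ entry of the left-hand side must vanish for $0\le i\le d-1$. I would compute that entry term by term, using the band structure of $A$ (diagonal $\th_i$, subdiagonal all $1$) and of $A^*$ (diagonal $x_i$, superdiagonal $y_i$, subdiagonal $z_i$). The key point is that the only way any of these products can reach the superdiagonal is through the single superdiagonal entry $y_{i+1}$ of $A^*$, so every term carries $y_{i+1}$ as a common factor and the $(i,i+1)$ entry of the left-hand side collapses to
\[
 y_{i+1}\bigl(\th_i^2 - \beta\,\th_i\th_{i+1} + \th_{i+1}^2 - \gamma(\th_i+\th_{i+1}) - \varrho\bigr).
\]
Since $y_{i+1}\neq 0$ (we assume $y_iz_i\neq 0$), this forces the bracket to vanish, i.e.\ the ordering $\{\th_i\}_{i=0}^d$ satisfies the relation \eqref{eq:rho} for every consecutive pair.

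From here I would finish purely formally. Subtracting the relation \eqref{eq:rho} for the pair $(\th_{i-1},\th_i)$ from that for $(\th_i,\th_{i+1})$ and factoring gives $(\th_{i+1}-\th_{i-1})\bigl(\th_{i-1}-\beta\th_i+\th_{i+1}-\gamma\bigr)=0$ for $1\le i\le d-1$; since the eigenvalues are distinct the first factor is nonzero, which yields the $\gamma$-recurrence \eqref{eq:recgamma2}. By Lemma \ref{lem:betarec} the sequence $\{\th_i\}_{i=0}^d$ is then $\beta$-recurrent, and it satisfies both \eqref{eq:recgamma2} and, taking the pair $(\th_0,\th_1)$ in the relation just derived, the endpoint condition \eqref{eq:recrho2}. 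Lemma \ref{lem:standardorder} then shows that $\{\th_i\}_{i=0}^d$ coincides with a standard ordering or with its reversal, and since the reversal of a standard ordering is again standard, the claim follows. The main obstacle is the band-structure bookkeeping needed to confirm that every term of the left-hand side really does factor through $y_{i+1}$ (this is exactly where the lower-bidiagonal shape of $A$ is essential); I would also note that the scalars $\gamma,\varrho$ appearing in \eqref{eq:AW1} are precisely the Leonard-pair scalars attached to a standard ordering via \eqref{eq:gamma} and \eqref{eq:rho}, so they agree with those hypothesized in Lemma \ref{lem:standardorder}, which is what legitimizes the final appeal.
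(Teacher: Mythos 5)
Your proposal is correct and takes essentially the same approach as the paper: extract the recurrences \eqref{eq:recgamma2} and \eqref{eq:recrho2} by evaluating entries of an Askey--Wilson relation just above the diagonal (where lower-triangularity of $A$ forces the right-hand side to vanish), then conclude via Lemma \ref{lem:standardorder}. The only cosmetic difference is that the paper reads off the $(i-1,i+1)$- and $(0,1)$-entries of \eqref{eq:AW2} to get the $\gamma$-recurrence directly, whereas you read off the $(i,i+1)$-entries of \eqref{eq:AW1} and obtain the $\gamma$-recurrence by differencing consecutive instances of the $\varrho$-relation.
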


\begin{proof}
Clearly $\{\th_i\}_{i=0}^d$ is an ordering of the eigenvalues of $A$.
Compute the $(i-1,i+1)$-entry of \eqref{eq:AW2} for $1 \leq i \leq d-1$
to find
\[
y_i y_{i+1} (\th_{i-1} - \beta \th_i + \th_{i+1}) = y_i y_{i+1} \gamma.
\]
So $\gamma  = \th_{i-1} - \beta \th_i + \th_{i+1}$ for $1 \leq i \leq d-1$.
Compute the $(0,1)$-entry of \eqref{eq:AW2} to find
\[
  y_0 \big( \th_0^2 - \beta \th_0 \th_1 + \th_1^2 - \gamma (\th_0 + \th_1) \big) = y_0 \varrho.
\]
So $\varrho =  \th_0^2 - \beta \th_0 \th_1 + \th_1^2 - \gamma (\th_0 + \th_1)$.
By these comments and Lemma \ref{lem:standardorder} we find the result.
\end{proof}

Let $\{\th^*_i\}_{i=0}^d$ be a standard ordering of the eigenvalues of $A^*$.
Let $\alpha$, $a$, $a'$ (resp.\ $\alpha^*$, $b$, $b'$) be scalars that satisfy
\eqref{eq:thi} (resp.\ \eqref{eq:thsi}).
We assume $\alpha=0$ and $\alpha^*=0$.
Let the scalars $\gamma$, $\gamma^*$, $\varrho$, $\varrho^*$ be from
\eqref{eq:gamma}--\eqref{eq:rhos}.
Let $\{\vphi_i\}_{i=1}^d$ (resp.\ $\{\phi_i\}_{i=1}^d$) be the first split sequence
(resp.\ second split sequence) of $A,A^*$ associated with the ordering
$(\{\th_i\}_{i=0}^d, \{\th^*_i\}_{i=0}^d)$.
Let $\xi$ be a scalar that satisfies \eqref{eq:vphi} and \eqref{eq:phi},
and let the scalars $\omega$, $\eta$, $\eta^*$ be from Lemma \ref{lem:AWrel}.
Note that the scalars $\gamma$, $\gamma^*$, $\varrho$, $\varrho^*$,
$\omega$, $\eta$, $\eta^*$ are written as in \eqref{eq:cgamma}--\eqref{eq:cetas}.

\begin{lemma}   \label{lem:zi}  \samepage
\ifDRAFT {\rm lem:zi}. \fi
With the above notation, 
after replacing $q$ with $q^{-1}$ if necessary,
\begin{align}
  z_i &= z_1  q^{2-2i}  &&  (1 \leq i \leq d).         \label{eq:zi}
\end{align}
\end{lemma}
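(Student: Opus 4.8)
The plan is to extract relations purely among the subdiagonal entries $z_i$ by evaluating the Askey--Wilson relations \eqref{eq:AW1} and \eqref{eq:AW2} at entries lying three steps below the diagonal, namely the $(i+2,i-1)$-entries for $1\le i\le d-2$. The reason for going this far off-diagonal is that every lower-order term vanishes there for structural (support) reasons: $A$ is bidiagonal and $I$ is diagonal, so the terms $\varrho A^*$, $\varrho^* A$, $\omega A$, $\omega A^*$, $\eta I$, $\eta^* I$ contribute nothing, and the $\gamma$- and $\gamma^*$-terms, together with $A^2$ and ${A^*}^2$, are all supported within two diagonals. Thus each relation collapses to a clean identity among the $z_i$, with no $x_i$ or $y_i$ intruding.

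First I would read off \eqref{eq:AW1} at the $(i+2,i-1)$-entry. Since $A^*$ occurs linearly, tracking the one surviving summation index in each triple product gives $(A^2A^*)_{i+2,i-1}=z_i$, $(AA^*A)_{i+2,i-1}=z_{i+1}$, and $(A^*A^2)_{i+2,i-1}=z_{i+2}$, so that
\[
 z_{i+2}-\beta z_{i+1}+z_i=0 \qquad (1\le i\le d-2).
\]
Then I would read off \eqref{eq:AW2} at the same entry. Here $A^*$ occurs quadratically, and using that ${A^*}^2$ is pentadiagonal with sub-subdiagonal entries $({A^*}^2)_{k,k-2}=z_kz_{k-1}$ one finds $({A^*}^2A)_{i+2,i-1}=z_{i+2}z_{i+1}$, $(A^*AA^*)_{i+2,i-1}=z_{i+2}z_i$, and $(A{A^*}^2)_{i+2,i-1}=z_{i+1}z_i$, so that
\[
 z_{i+2}z_{i+1}-\beta z_{i+2}z_i+z_{i+1}z_i=0 \qquad (1\le i\le d-2).
\]

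To finish, I would combine these. Substituting $z_{i+2}=\beta z_{i+1}-z_i$ into the second display and dividing by $\beta$, which is nonzero since $\beta=q^2+q^{-2}$ and $q$ is not a root of unity, yields $z_{i+1}^2-\beta z_iz_{i+1}+z_i^2=0$; dividing by $z_i^2\ne 0$ shows that the ratio $r_i:=z_{i+1}/z_i$ is a root of $t^2-\beta t+1=(t-q^2)(t-q^{-2})$, hence $r_i\in\{q^2,q^{-2}\}$. The linear recurrence rewrites as $r_{i+1}=\beta-r_i^{-1}$, and since $\beta=r_i+r_i^{-1}$ for such $r_i$ this forces $r_{i+1}=r_i$; so all ratios coincide and $z_i$ is geometric. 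If the common ratio is $q^{-2}$ we get $z_i=z_1q^{2-2i}$ at once, and if it is $q^2$ then replacing $q$ by $q^{-1}$, which leaves $\beta$ and the eigenvalue data invariant, puts it in the same form.

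The routine part is the entrywise bookkeeping of which index survives in each triple product. The point to watch, and the real crux, is that neither display alone suffices: each individually leaves a two-parameter family $z_i=Pq^{2i}+Qq^{-2i}$, and it is exactly the interplay of the linear relation with its reciprocal counterpart that forces the ratio to a fixed point of the recurrence, killing one of the two modes. The hypotheses $d\ge 3$ (so the range $1\le i\le d-2$ is nonempty) and $q$ not a root of unity (so $\beta\ne 0$ and $q^2\ne q^{-2}$) are both consumed at this last step.
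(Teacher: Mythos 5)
Your proposal is correct and follows essentially the same route as the paper: both extract the linear recurrence $z_{i}-\beta z_{i+1}+z_{i+2}=0$ from the third subdiagonal of \eqref{eq:AW1} and a quadratic relation among consecutive $z$'s from the third subdiagonal of \eqref{eq:AW2}, then combine them and factor $t^2-\beta t+1=(t-q^2)(t-q^{-2})$ to force a geometric progression with ratio $q^{\pm 2}$. The only differences are cosmetic: the paper uses just the $(3,0)$-entry of \eqref{eq:AW2} and propagates via the linear recurrence, whereas you use all such entries and finish with a fixed-point argument on the ratios $r_i$.
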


\begin{proof}
Compute the $(i+1,i-2)$-entry of \eqref{eq:AW1} to find
\begin{align}
  z_{i-1} - \beta z_i + z_{i+1} &= 0   &&  (2 \leq i \leq d-1).        \label{eq:ziaux1}
\end{align}
By \eqref{eq:ziaux1} for $i=2$
\begin{align}
  z_3 = \beta z_2 - z_1.                   \label{eq:aux2}
\end{align}
Compute the $(3,0)$-entry of \eqref{eq:AW2} to find
\begin{align*}
  z_1 z_2 - \beta z_1 z_3 + z_2 z_3 = 0.  
\end{align*}
In this equation, eliminate $z_3$ using \eqref{eq:aux2}, and simplify the result using
$\beta=q^2+q^{-2}$ to find
\[
 (z_2 - q^2 z_1)(z_2 - q^{-2} z_1) = 0.
\]
So either $z_2 = z_1 q^2$ or $z_2 = z_1 q^{-2}$.
After replacing $q$ with $q^{-1}$ if necessary, we may assume $z_2 = z_1 q^{-2}$.
Now the result follows from this and \eqref{eq:ziaux1}.
\end{proof}

For the rest of this section, we choose $q$ that satisfies \eqref{eq:zi}.

\begin{lemma}   \label{lem:xi}  \samepage
\ifDRAFT {\rm lem:xi}. \fi
With the above notation, 
for $1 \leq i \leq d-1$
\begin{equation}      \label{eq:xi}
 q^{-3} x_{i-1} - (q+q^{-1})x_i + q^3 x_{i+1} = 0.
\end{equation}
\end{lemma}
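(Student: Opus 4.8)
The plan is to read off the stated recurrence from a single, carefully chosen entry of the \emph{second} Askey--Wilson relation \eqref{eq:AW2}. The point of using \eqref{eq:AW2} rather than \eqref{eq:AW1} is that $A^*$ occurs there twice, so the diagonal entries $x_{i-1},x_i,x_{i+1}$ appear each multiplied by an off-diagonal entry of $A^*$; since Lemma \ref{lem:zi} already tells us the subdiagonal entries are geometric, those off-diagonal factors are explicit powers of $q$, and this is precisely what produces the asymmetric coefficients $q^{-3},-(q+q^{-1}),q^3$. Concretely, I would evaluate the $(i+1,i-1)$ entry of \eqref{eq:AW2} for $1\le i\le d-1$, expanding ${A^*}^2A$, $A^*AA^*$ and $A{A^*}^2$ directly from the bidiagonal form of $A$ and the tridiagonal form of $A^*$ in \eqref{eq:LBTD}.

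The first thing to notice is that the position $(i+1,i-1)$ lies two steps below the diagonal, so none of the superdiagonal entries $y_j$ can reach it: every surviving contribution involves only $x_{i-1},x_i,x_{i+1}$ and the subdiagonal entries $z_i,z_{i+1}$. The second, more delicate, step is the bookkeeping of the terms quadratic in $z$. Collecting them, the three triple products contribute $z_iz_{i+1}\,(\theta_{i-1}-\beta\theta_i+\theta_{i+1})=\gamma\,z_iz_{i+1}$, and this cancels exactly against the term $\gamma({A^*}^2)_{i+1,i-1}=\gamma\,z_iz_{i+1}$ coming from the right-hand side $\gamma{A^*}^2+\omega A^*+\eta^*I$ of \eqref{eq:AW2}. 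All remaining pieces vanish at this position: $A^*_{i+1,i-1}=0$ and $A_{i+1,i-1}=0$ kill the $\omega A^*$, $\eta^*I$ and $\varrho^*A$ contributions, while $-\gamma^*(A^*A+AA^*)$ disappears because $\gamma^*=0$ in the present normalization $\alpha=\alpha^*=0$. What is left is the clean relation
\[
  x_{i-1}\,(z_i-\beta z_{i+1}) + x_i\,(z_i+z_{i+1}) + x_{i+1}\,(z_{i+1}-\beta z_i) = 0 .
\]

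To finish, I would substitute $z_i=z_1q^{2-2i}$ from Lemma \ref{lem:zi} together with $\beta=q^2+q^{-2}$. The three coefficients collapse to $z_i-\beta z_{i+1}=-z_1q^{-2i-2}$, $z_i+z_{i+1}=z_1q^{-2i}(1+q^2)$ and $z_{i+1}-\beta z_i=-z_1q^{4-2i}$; dividing through by the nonzero scalar $-z_1q^{1-2i}$ turns the display into $q^{-3}x_{i-1}-(q+q^{-1})x_i+q^3x_{i+1}=0$, which is \eqref{eq:xi}.

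I expect the main obstacle to be organizational rather than conceptual: correctly expanding the three triple products at an off-diagonal position and, above all, recognizing that the quadratic-in-$z$ contributions on the left are not extra noise but exactly match the $\gamma{A^*}^2$ term on the right of \eqref{eq:AW2}, so that they cancel. It is worth stressing why \eqref{eq:AW2} is the correct relation: the analogous $(i+1,i-1)$ entry of \eqref{eq:AW1}, in which $A^*$ occurs only once, yields instead the \emph{untwisted} relation $x_{i-1}-\beta x_i+x_{i+1}=(\text{const})\,q^{-4i}$, which is inhomogeneous and does not by itself force the homogeneous $q^{2i}$-mode to vanish. It is the quadratic occurrence of $A^*$ in \eqref{eq:AW2} that supplies the geometric $z$-factors, hence the twist, and makes the recurrence genuinely homogeneous. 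Once the cancellation is in place, the substitution from Lemma \ref{lem:zi} is routine.
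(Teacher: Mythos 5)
Your proposal is correct and follows the paper's own proof essentially verbatim: the paper also computes the $(i+1,i-1)$-entry of \eqref{eq:AW2}, observes that the $z_iz_{i+1}(\th_{i-1}-\beta\th_i+\th_{i+1})$ term vanishes via \eqref{eq:gamma} (with $\gamma=0$ in the normalization $\alpha=0$), and then simplifies the remaining terms using \eqref{eq:zi} to reach \eqref{eq:xi}. Your closing remark about the $(i+1,i-1)$-entry of \eqref{eq:AW1} yielding the inhomogeneous relation is consistent with what the paper records separately as Lemma \ref{lem:xizi}.
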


\begin{proof}
Compute the $(i+1,i-1)$-entry of \eqref{eq:AW2} to find
\[
  z_i z_{i+1} (\th_{i-1}-\beta \th_i + \th_{i+1})   
   + x_i(z_i + z_{i+1}) + x_{i-1}(z_i-\beta z_{i+1})+ x_{i+1}(z_{i+1}-\beta z_i) = 0.
\]
In this equation, the first term is zero by \eqref{eq:gamma}.
Simplify the remaining terms using \eqref{eq:zi} to find \eqref{eq:xi}.
\end{proof}

\begin{lemma}   \label{lem:xizi}  \samepage
\ifDRAFT {\rm lem:xizi}. \fi
With the above notation,
for $1 \leq i \leq d-1$
\begin{equation}    \label{eq:xizi}
 x_{i-1} - \beta x_i + x_{i+1}  = a' q^{d-2i-1}(q^2-q^{-2})(q^3-q^{-3}) z_i.
\end{equation}
\end{lemma}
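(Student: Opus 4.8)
The plan is to evaluate the $(i+1,i-1)$-entry of the Askey--Wilson relation \eqref{eq:AW1}, in the same spirit as Lemma \ref{lem:xi} but using \eqref{eq:AW1} in place of \eqref{eq:AW2}. The reason for switching relations is structural: the $(i+1,i-1)$-entry of \eqref{eq:AW2} attaches to $x_{i-1},x_i,x_{i+1}$ the coefficients $z_i-\beta z_{i+1}$, $z_i+z_{i+1}$, $z_{i+1}-\beta z_i$, which are not proportional to $1,-\beta,1$; by contrast, in \eqref{eq:AW1} the term $-\beta AA^*A$ supplies exactly the $-\beta x_i$ needed to assemble the combination $x_{i-1}-\beta x_i+x_{i+1}$ appearing on the left of \eqref{eq:xizi}.

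First I would use $\gamma=\gamma^*=0$ (from \eqref{eq:cgamma}, valid since $\alpha=\alpha^*=0$) to reduce \eqref{eq:AW1} to $A^2A^*-\beta AA^*A+A^*A^2-\varrho A^*=\omega A+\eta I$. The $(i+1,i-1)$-entry lies two below the diagonal: on the right-hand side the term $\gamma^*A^2$ is killed by $\gamma^*=0$ and $\omega A+\eta I$ contributes nothing there, so the right-hand side entry is $0$. Expanding the left-hand side entrywise, using that $A$ is lower bidiagonal with unit subdiagonal and $A^*$ is tridiagonal, the diagonal entries $x_{i-1},x_i,x_{i+1}$ emerge with coefficients $1,-\beta,1$ while the subdiagonal entries $z_i,z_{i+1}$ appear paired with diagonal entries of $A$. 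This yields
\[
 x_{i-1}-\beta x_i+x_{i+1}
   = -z_i(\th_{i+1}+\th_i-\beta\th_{i-1})-z_{i+1}(\th_i+\th_{i-1}-\beta\th_{i+1}).
\]

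Next I would simplify the right-hand side using $z_{i+1}=z_iq^{-2}$ (from \eqref{eq:zi}), the closed form $\th_i=aq^{2i-d}+a'q^{d-2i}$ (from \eqref{eq:thi} with $\alpha=0$), and $\beta=q^2+q^{-2}$. Writing each bracket as an $a$-part plus an $a'$-part, the $a$-contributions cancel because their combined coefficient is $(q^2-q^{-4})+q^{-2}(q^{-2}-q^4)=0$, while the $a'$-contributions collapse via $q^4-q^{-2}-1+q^{-6}=q^{-1}(q^2-q^{-2})(q^3-q^{-3})$ to give $a'q^{d-2i-1}(q^2-q^{-2})(q^3-q^{-3})z_i$, which is \eqref{eq:xizi}.

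The main obstacle is purely the bookkeeping in this last step: correctly expanding the two brackets, tracking the powers of $q$, and verifying both the exact cancellation of the $a$-terms and the factorization of the surviving $a'$-terms into the product $(q^2-q^{-2})(q^3-q^{-3})$. Everything else is a mechanical entry computation of the type already carried out in Lemmas \ref{lem:standard}--\ref{lem:xi}.
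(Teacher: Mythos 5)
Your proposal is correct and follows exactly the paper's route: compute the $(i+1,i-1)$-entry of \eqref{eq:AW1}, eliminate $z_{i+1}$ via \eqref{eq:zi}, and simplify using \eqref{eq:thi} with $\beta=q^2+q^{-2}$; your intermediate identity agrees with the one displayed in the paper's proof, and the cancellation of the $a$-terms and the factorization of the $a'$-terms check out. No gaps.
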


\begin{proof}
Compute $(i+1,i-1)$-entry of \eqref{eq:AW1} to find
\[
  x_{i-1} - \beta x_i + x_{i+1} 
  + \th_{i-1} (z_{i+1} - \beta z_i) + \th_i (z_i + z_{i+1}) + \th_{i+1}(z_i - \beta z_{i+1}) = 0.
\]
In this line, eliminate $z_{i+1}$ using \eqref{eq:zi}, and simplify the result using \eqref{eq:thi}
to find \eqref{eq:xizi}.
\end{proof}

For notational convenience, define $y_0 = 0$ and $y_{d+1}=0$.

\begin{lemma}   \label{lem:xiyi}   \samepage
\ifDRAFT {\rm lem:xiyi}. \fi
With the above notation,
for $1 \leq i \leq d$
\begin{equation}   \label{eq:xiyi}
 y_{i-1} - \beta y_i + y_{i+1}
  =   (\th_{i-2}-\th_{i-1})x_{i-1} + (\th_{i+1}-\th_{i}) x_i + \omega.
\end{equation}
\end{lemma}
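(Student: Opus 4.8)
The plan is to extract \eqref{eq:xiyi} by computing a single matrix entry of the Askey--Wilson relation \eqref{eq:AW1}, in the same spirit as the proofs of Lemmas \ref{lem:xi} and \ref{lem:xizi}. The superdiagonal entries $y_{i-1},y_i,y_{i+1}$ of $A^*$ are precisely the entries that meet in the $(i,i-1)$-position of the products appearing in \eqref{eq:AW1}, so I would compute the $(i,i-1)$-entry of \eqref{eq:AW1}. First I would invoke \eqref{eq:cgamma}, which gives $\gamma=0$ and $\gamma^*=0$; this collapses \eqref{eq:AW1} to $A^2A^*-\beta AA^*A+A^*A^2-\varrho A^*=\omega A+\eta I$. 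On the right-hand side the $(i,i-1)$-entry is simply $\omega$, since the relevant entry of $A$ is $1$ while $I$ contributes nothing off the diagonal and the $A^2$ term has dropped out.

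For the left-hand side I would first record the entries of $A^2$ coming from the bidiagonal shape of $A$: the diagonal is $\th_r^2$, the subdiagonal is $\th_r+\th_{r-1}$, and the sub-subdiagonal is $1$. Using these together with the tridiagonal shape of $A^*$ (entries $x_r$ on the diagonal, $y_{r+1}$ on the superdiagonal, $z_r$ on the subdiagonal), the $(i,i-1)$-entry of each of the four terms $A^2A^*$, $-\beta AA^*A$, $A^*A^2$, $-\varrho A^*$ is a short sum. Collecting these, the $y$-contributions combine to $y_{i-1}-\beta y_i+y_{i+1}$, while the $x$- and $z$-contributions produce coefficients built from the $\th_j$. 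Concretely, the coefficient of $x_{i-1}$ is $\th_i+(1-\beta)\th_{i-1}$, that of $x_i$ is $\th_{i-1}+(1-\beta)\th_i$, and the total coefficient of $z_i$ is $\th_{i-1}^2-\beta\th_{i-1}\th_i+\th_i^2-\varrho$.

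The final step is to simplify these coefficients using the recurrences of Lemma \ref{lem:recurrent}. Applying \eqref{eq:gamma} with $\gamma=0$ in the forms $\th_{i-2}=\beta\th_{i-1}-\th_i$ and $\th_{i+1}=\beta\th_i-\th_{i-1}$ rewrites the coefficient of $x_{i-1}$ as $\th_{i-1}-\th_{i-2}$ and that of $x_i$ as $\th_i-\th_{i+1}$; and \eqref{eq:rho} with $\gamma=0$ shows the coefficient of $z_i$ equals $\varrho-\varrho=0$, so the $z$-terms cancel entirely. Equating the resulting left-hand side to $\omega$ and rearranging yields \eqref{eq:xiyi}. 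I expect the only real care to be needed at the boundary cases $i=1$ and $i=d$: here one must appeal to the extended scalars $\th_{-1}$, $\th_{d+1}$ (defined so that \eqref{eq:gamma} holds at $i=0$ and $i=d$) and to the convention $y_0=y_{d+1}=0$, and check that the missing matrix positions match these conventions so that the single formula \eqref{eq:xiyi} holds uniformly for $1\le i\le d$.
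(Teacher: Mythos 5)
Your proposal is correct and follows essentially the same route as the paper: the paper's proof also computes the $(i,i-1)$-entry of \eqref{eq:AW1} and then simplifies the resulting coefficients of $x_{i-1}$, $x_i$, $z_i$ using \eqref{eq:gamma} and \eqref{eq:rho}, exactly as you describe.
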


\begin{proof}
Compute the $(i,i-1)$-entry of \eqref{eq:AW1} to find
\begin{align*}
 & z_i \big( \th_{i-1}^2 - \beta \th_{i-1}\th_i + \th_i^2 - \varrho \big)  
        + y_{i-1} - \beta y_i + y_{i+1}
\\ & \qquad\qquad\qquad
  =  (\beta \th_{i-1} - \th_{i-1} - \th_i) x_{i-1}
  +  (\beta \th_i - \th_{i-1} - \th_i) x_i  
  + \omega.
\end{align*}
Simplify this using \eqref{eq:gamma} and \eqref{eq:rho} to get \eqref{eq:xiyi}.
\end{proof}

\begin{lemma}   \label{lem:xi2}
\ifDRAFT {\rm lem:xi2}. \fi
With the above notation,
for $1 \leq i \leq d$
\begin{equation}  
 x_{i-1}^2 - \beta x_{i-1} x_i + x_i^2  - \varrho^*  
   = (q+q^{-1}) \big( q^{-1} y_{i-1} - (q+q^{-1})y_i + q y_{i+1} \big) z_i.  \label{eq:xi2}
\end{equation}
\end{lemma}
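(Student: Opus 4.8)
The plan is to obtain \eqref{eq:xi2} by evaluating a single matrix entry of the second Askey--Wilson relation \eqref{eq:AW2}, in the same spirit as Lemmas \ref{lem:zi}--\ref{lem:xiyi}. Since the target features $\varrho^*$ and the squares $x_{i-1}^2$, $x_i^2$ of the diagonal entries of $A^*$, the relevant relation is \eqref{eq:AW2} rather than \eqref{eq:AW1}. First I would compute the $(i,i-1)$-entry of \eqref{eq:AW2} for $1 \leq i \leq d$ directly from \eqref{eq:LBTD}, using $A_{i,i-1}=1$ together with the sub-, super-, and diagonal entries $z_i$, $y_i$, $x_i$ of $A^*$. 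The diagonal products in ${A^*}^2 A$ and $A {A^*}^2$ supply $x_i^2$ and $x_{i-1}^2$, the middle term $-\beta A^*AA^*$ supplies $-\beta x_{i-1}x_i$, and $-\varrho^* A$ supplies $-\varrho^*$, so the left-hand side already carries the block $x_{i-1}^2-\beta x_{i-1}x_i+x_i^2-\varrho^*$; the right-hand side contributes $\omega z_i$. The leftover terms are of two kinds: products $y_{j}z_{k}$ and mixed products $\th_j x_k z_i$.

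Next I would invoke \eqref{eq:cgamma}, so that $\gamma=\gamma^*=0$ throughout, which discards the $\gamma^*$ contributions from $A^*A+AA^*$ and the $\gamma{A^*}^2$ term on the right. I would then use \eqref{eq:zi} to rewrite $z_{i-1}=q^2 z_i$ and $z_{i+1}=q^{-2}z_i$, collapsing every $z$ to a multiple of $z_i$; after substituting $\beta=q^2+q^{-2}$, the purely $y$-type terms collect into $z_i\bigl(q^{-2}y_{i-1}-2y_i+q^2y_{i+1}\bigr)$.

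The main obstacle is the mixed block $z_i\bigl[(\beta-1)(\th_{i-1}x_{i-1}+\th_i x_i)-\th_{i-1}x_i-\th_i x_{i-1}\bigr]$, which carries $\th x$ products absent from \eqref{eq:xi2}. The key step is that $\gamma=0$ lets me rewrite $(\beta-1)\th_{i-1}-\th_i=\th_{i-2}-\th_{i-1}$ and $(\beta-1)\th_i-\th_{i-1}=\th_{i+1}-\th_i$ via \eqref{eq:gamma}, turning this block into $z_i\bigl[(\th_{i-2}-\th_{i-1})x_{i-1}+(\th_{i+1}-\th_i)x_i\bigr]$. Combining it with the right-hand term $\omega z_i$ and applying \eqref{eq:xiyi} converts $\omega+(\th_{i-2}-\th_{i-1})x_{i-1}+(\th_{i+1}-\th_i)x_i$ into $y_{i-1}-\beta y_i+y_{i+1}$, so the mixed terms reassemble into an extra $z_i(y_{i-1}-\beta y_i+y_{i+1})$.

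Finally I would add this to the $y$-type block:
\[
z_i\bigl(q^{-2}y_{i-1}-2y_i+q^2y_{i+1}\bigr)+z_i\bigl(y_{i-1}-\beta y_i+y_{i+1}\bigr)
= z_i\bigl((1+q^{-2})y_{i-1}-(\beta+2)y_i+(1+q^2)y_{i+1}\bigr),
\]
and use $\beta+2=(q+q^{-1})^2$, $1+q^{-2}=q^{-1}(q+q^{-1})$, $1+q^2=q(q+q^{-1})$ to factor out $q+q^{-1}$ and recover exactly the right-hand side of \eqref{eq:xi2}. I expect the genuinely delicate point to be the bookkeeping that isolates the $\th x z$ terms and the recognition that \eqref{eq:xiyi} is precisely what absorbs $\omega z_i$; everything else is routine simplification.
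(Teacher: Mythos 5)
Your proposal is correct and follows essentially the same route as the paper's proof: compute the $(i,i-1)$-entry of \eqref{eq:AW2}, collapse the $z$'s to multiples of $z_i$ via \eqref{eq:zi}, convert the $\th x$ block using \eqref{eq:gamma}, absorb the $\omega z_i$ term via Lemma \ref{lem:xiyi}, and factor out $q+q^{-1}$. The only difference is that you spell out the intermediate bookkeeping in more detail than the paper does.
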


\begin{proof}
Compute the $(i,i-1)$-entry of \eqref{eq:AW2}, and simplify the result using \eqref{eq:zi}
to find that the left-hand side of \eqref{eq:xi2}
is equal to $z_i$ times
\[
 q^{-2} y_{i-1} - 2 y_i + q^2 y_{i+1}
 +  (\beta \th_{i-1} - \th_{i-1} - \th_i) x_{i-1} 
 +  (\beta \th_i - \th_{i-1}  - \th_i) x_i  + \omega. 
\]
Using \eqref{eq:gamma} one finds that the above expression is equal to
\[
q^{-2} y_{i-1} - 2 y_i + q^2 y_{i+1}
 +  (\th_{i-2}-\th_{i-1}) x_{i-1} +  (\th_{i+1} - \th_i) x_i
  + \omega.        
\]
Using Lemma \ref{lem:xiyi} one finds that $z_i$ times the above expression is equal 
to the right-hand side of \eqref{eq:xi2}. 
\end{proof}

\begin{lemma}   \label{lem:y1yd}  \samepage
\ifDRAFT {\rm lem:y1yd}. \fi
With the above notation,
\begin{align}  
 (\th_0 - \beta \th_0 + \th_1) y_1 &= 
    \big((\beta - 2) \th_0^2 + \varrho \big) x_0 + \omega \th_0 + \eta,   \label{eq:y1}
\\
 (\th_{d-1} + \th_{d} - \beta \th_d) y_d &=
    \big( (\beta - 2) \th_d^2 + \varrho \big) x_d + \omega \th_d + \eta.  \label{eq:yd}
\end{align}
\end{lemma}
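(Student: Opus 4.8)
The plan is to derive both identities as corner entries of the Askey--Wilson relation \eqref{eq:AW1}, exactly as in the preceding lemmas of this section: I would obtain \eqref{eq:y1} from the $(0,0)$-entry of \eqref{eq:AW1} and \eqref{eq:yd} from the $(d,d)$-entry. Throughout I would use that the standing assumption $\alpha=\alpha^*=0$ forces $\gamma=\gamma^*=0$ by \eqref{eq:cgamma}, so that the $\gamma$-terms on the left and the $\gamma^* A^2$ term on the right of \eqref{eq:AW1} drop out and leave the clean forms stated.

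For \eqref{eq:y1}, recall that $A$ is lower bidiagonal with $A_{00}=\th_0$ and subdiagonal entries $1$, while $A^*$ is tridiagonal with $A^*_{00}=x_0$ and $A^*_{01}=y_1$. Since $A^2$ is lower triangular with $(A^2)_{00}=\th_0^2$ and $(A^2)_{10}=\th_0+\th_1$, the band structure lets me read off
\[
 (A^2 A^*)_{00}=\th_0^2 x_0,\qquad
 (A A^* A)_{00}=\th_0^2 x_0+\th_0 y_1,\qquad
 (A^* A^2)_{00}=\th_0^2 x_0+(\th_0+\th_1)y_1 .
\]
With $\gamma^*=0$ the $(0,0)$-entry of the right-hand side of \eqref{eq:AW1} is $\omega\th_0+\eta$. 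Equating the two sides, collecting the coefficient of $y_1$ and that of $x_0$, and rearranging then yields \eqref{eq:y1}.

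For \eqref{eq:yd} I would proceed in the same way with the $(d,d)$-entry. Column $d$ of $A$ carries only $A_{dd}=\th_d$, and row $d$ of $A^2$ has $(A^2)_{d,d-1}=\th_{d-1}+\th_d$ and $(A^2)_{dd}=\th_d^2$; together with $A^*_{d,d-1}=z_d$ and $A^*_{d-1,d}=y_d$ this gives
\[
 (A^2 A^*)_{dd}=(\th_{d-1}+\th_d)y_d+\th_d^2 x_d,\qquad
 (A A^* A)_{dd}=\th_d y_d+\th_d^2 x_d,\qquad
 (A^* A^2)_{dd}=\th_d^2 x_d .
\]
Using $\gamma=\gamma^*=0$, equating the $(d,d)$-entries of the two sides of \eqref{eq:AW1} and solving for the coefficient of $y_d$ produces \eqref{eq:yd}.

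The computation is routine once the band structure is exploited, and I expect no genuine obstacle. The only point requiring care is the bookkeeping of which entries of the triple products contribute at the two corners---in particular the off-diagonal entries $(A^2)_{10}=\th_0+\th_1$ and $(A^2)_{d,d-1}=\th_{d-1}+\th_d$ of $A^2$, which are precisely what feed the $y_1$- and $y_d$-terms---together with the systematic use of $\gamma=\gamma^*=0$ to discard the extraneous terms so that the stated forms of \eqref{eq:y1} and \eqref{eq:yd} emerge.
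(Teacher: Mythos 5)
Your proposal is correct and follows exactly the paper's own (one-line) proof: extract the $(0,0)$- and $(d,d)$-entries of the Askey--Wilson relation \eqref{eq:AW1}, using the band structure of $A$ and $A^*$ and the vanishing of $\gamma,\gamma^*$. The corner entries of the triple products you display are all accurate, and the $-\varrho A^*$ term supplies the $\varrho x_0$ and $\varrho x_d$ contributions in the stated identities, so no gap remains.
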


\begin{proof}
Compute the $(0,0)$-entry and the $(d,d)$-entry of \eqref{eq:AW1}.
\end{proof}

\begin{lemma}   \label{lem:y1}   \samepage
\ifDRAFT {\rm lem:y1}. \fi
With the above notation,
\begin{equation}   \label{eq:y12}
 \big( (1-\beta) x_0 + x_1 + (\th_0-\th_2) z_1 \big) y_1
 = (\beta-2) \th_0 x_0^2 + \omega x_0 + \varrho^* \th_0 + \eta^*.
\end{equation}
\end{lemma}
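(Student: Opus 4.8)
The plan is to compute the $(0,0)$-entry of the Askey-Wilson relation \eqref{eq:AW2}, in exact parallel with the corner computations of Lemmas \ref{lem:y1yd} and \ref{lem:xi2}. The choice of \eqref{eq:AW2} (rather than \eqref{eq:AW1}) is forced by the right-hand side of \eqref{eq:y12}, which features $\omega$, $\varrho^*$, and $\eta^*$. Since $A$ is lower bidiagonal and $A^*$ is tridiagonal, only entries with small row/column indices reach the $(0,0)$-corner, so each triple product collapses to a short sum. Concretely, the only contributing entries are $A_{00}=\th_0$, $A_{10}=1$, together with $A^*_{00}=x_0$, $A^*_{01}=y_1$, $A^*_{10}=z_1$, $A^*_{11}=x_1$, $A^*_{12}=y_2$, $A^*_{21}=z_2$; after the band structure is accounted for, in fact only the indices $0,1$ survive in each factor.

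First I would record the corner data $({A^*}^2)_{00}=x_0^2+y_1 z_1$ and $({A^*}^2)_{01}=y_1(x_0+x_1)$, from which the $(0,0)$-entries of all five terms on the left side of \eqref{eq:AW2} are read off directly, together with the $(0,0)$-entry $\gamma(x_0^2+y_1 z_1)+\omega x_0+\eta^*$ on the right side. Recalling from \eqref{eq:cgamma} that $\gamma=0$ and $\gamma^*=0$ (valid because $\alpha=\alpha^*=0$), all terms carrying $\gamma$ or $\gamma^*$ drop out, and the identity reduces to a polynomial relation among $x_0$, $x_1$, $y_1$, $z_1$, $\th_0$, $\th_1$, $\th_2$ and the scalars $\omega$, $\varrho^*$, $\eta^*$.

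The remaining step is bookkeeping: collect on one side the terms proportional to $y_1$, whose coefficient simplifies to $(1-\beta)x_0+x_1+(2\th_0-\beta\th_1)z_1$, and gather the rest into $(\beta-2)\th_0 x_0^2+\omega x_0+\varrho^*\th_0+\eta^*$. To match the stated form \eqref{eq:y12} I would then rewrite the coefficient of $z_1$ using \eqref{eq:gamma} at $i=1$, namely $\gamma=\th_0-\beta\th_1+\th_2$: since $\gamma=0$ this gives $\beta\th_1=\th_0+\th_2$, hence $2\th_0-\beta\th_1=\th_0-\th_2$, which yields \eqref{eq:y12} exactly. I expect the only genuine source of error to be the careful enumeration of which index pairs contribute to each triple product in the $(0,0)$-corner, and the sign-accurate collection of the $x_0^2\th_0$ and $y_1$ terms; there is no conceptual obstacle beyond this routine verification.
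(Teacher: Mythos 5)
Your proposal is correct and follows exactly the paper's own route: the paper likewise computes the $(0,0)$-entry of \eqref{eq:AW2}, obtaining $(2-\beta)\th_0 x_0^2+(1-\beta)x_0y_1+x_1y_1+(2\th_0-\beta\th_1)y_1z_1-\omega x_0-\varrho^*\th_0-\eta^*=0$, and then replaces $2\th_0-\beta\th_1$ by $\th_0-\th_2$ via \eqref{eq:gamma}. Your corner bookkeeping and the use of $\gamma=\gamma^*=0$ match the intended verification.
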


\begin{proof}
Compute the $(0,0)$-entry of \eqref{eq:AW2} to find
\[
 (2-\beta) \th_0 x_0^2 + (1-\beta)x_0 y_1 + x_1 y_1 + (2\th_0 - \beta \th_1) y_1 z_1
  - \omega x_0 - \varrho^* \th_0 - \eta^* = 0.
\]
By \eqref{eq:gamma} $2 \th_0 - \beta \th_1 = \th_0 - \th_2$.
By these comments we find \eqref{eq:y12}.
\end{proof}

\section{Obtaining the entries of $A^*$}
\label{sec:entriesAs}

Let $\{\th_i\}_{i=0}^d$, $\{x_i\}_{i=0}^d$, $\{y_i\}_{i=1}^d$, $\{z_i\}_{i=1}^d$ be scalars
such that  $y_i z_i \neq 0$ for $1 \leq i \leq d$.
Consider the matrices $A,A^*$ from \eqref{eq:LBTD}, and assume
$A,A^*$ is a Leonard pair in $\Mat_{d+1}(\F)$.
In this section we obtain the entries of $A^*$.

By Lemma \ref{lem:standard} $\{\th_i\}_{i=0}^d$ is a standard ordering of the eigenvalues of $A$.
Let $\{\th^*_i\}_{i=0}^d$ be a standard ordering of the eigenvalues of $A^*$.
Let $\beta$ (resp.\ $q$) be the fundamental parameter (resp.\ quantum parameter) of $A,A^*$,
and assume $q$ is not a root of unity.
Let $\alpha$, $a$, $a' $ (resp.\ $\alpha^*$, $b$, $b'$) be scalars that satisfy 
\eqref{eq:thi} (resp.\ \eqref{eq:thsi}).
We assume $\alpha=0$ and $\alpha^* = 0$.
Let $\{\vphi_i\}_{i=1}^d$ (resp.\ $\{\phi_i\}_{i=1}^d$) be the first split sequence
(resp.\ second split sequence) of $A,A^*$
associated with the ordering $(\{\th_i\}_{i=0}^d, \{\th^*_i\}_{i=0}^d)$.
Let $\xi$ be a scalar that satisfies \eqref{eq:vphi} and \eqref{eq:phi}.
Let the scalars $\gamma$, $\gamma^*$, $\varrho$, $\varrho^*$ be from
\eqref{eq:gamma}--\eqref{eq:rhos},
and the scalars $\omega$, $\eta$, $\eta^*$ be from Lemma \ref{lem:AWrel}.
Note that these scalars are written as in Lemma \ref{lem:scalars}.

\begin{lemma}   \label{lem:solxi}  \samepage
\ifDRAFT {\rm lem:solxi}. \fi
With the above notation,
\begin{align}                                 \label{eq:solxi}
 x_i &= q^{-2i} x_0 - a' z_1 q^{d-3i+1}(q+q^{-1})(q^i - q^{-i})  && (0 \leq i \leq d).
\end{align}
\end{lemma}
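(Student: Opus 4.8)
The plan is to solve the two linear recurrences for $\{x_i\}_{i=0}^d$ that were already extracted from the Askey–Wilson relations, namely the homogeneous one in Lemma \ref{lem:xi} and the inhomogeneous one in Lemma \ref{lem:xizi}, using Lemma \ref{lem:zi} to eliminate $z_i$. First I would treat the three-term recurrence $q^{-3}x_{i-1} - (q+q^{-1})x_i + q^3 x_{i+1} = 0$ of Lemma \ref{lem:xi}, valid for $1 \le i \le d-1$. Rewriting it as $x_{i+1} = (q^{-2}+q^{-4})x_i - q^{-6}x_{i-1}$, the characteristic polynomial is $t^2 - (q^{-2}+q^{-4})t + q^{-6} = (t-q^{-2})(t-q^{-4})$, whose roots $q^{-2}$ and $q^{-4}$ are distinct since $q$ is not a root of unity. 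Hence there are scalars $C_1,C_2$ with $x_i = C_1 q^{-2i} + C_2 q^{-4i}$ for $0 \le i \le d$ (the recurrence for $1 \le i \le d-1$ propagates the two initial values $x_0,x_1$ throughout the range, and the closed form matches these initial values). Evaluating at $i=0$ gives $C_1 + C_2 = x_0$.

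It then remains to determine $C_2$, and for this I would substitute the closed form into the recurrence \eqref{eq:xizi} of Lemma \ref{lem:xizi}. With $\beta = q^2+q^{-2}$ one has $q^2 - \beta + q^{-2} = 0$ and $q^4 - \beta + q^{-4} = (q^3-q^{-3})(q-q^{-1})$, so the $C_1$-contribution cancels and the left-hand side of \eqref{eq:xizi} becomes
\[
 x_{i-1} - \beta x_i + x_{i+1} = C_2\, q^{-4i}(q^3-q^{-3})(q-q^{-1}).
\]
On the right-hand side, Lemma \ref{lem:zi} gives $z_i = z_1 q^{2-2i}$, so \eqref{eq:xizi} reads $a' z_1 q^{d-4i+1}(q^2-q^{-2})(q^3-q^{-3})$. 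Cancelling the common nonzero factor $q^{-4i}(q^3-q^{-3})$ and writing $q^2-q^{-2} = (q-q^{-1})(q+q^{-1})$, the factor $q-q^{-1}$ cancels as well, leaving $C_2 = a' z_1 q^{d+1}(q+q^{-1})$.

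Finally I would assemble the answer. Since $C_1 = x_0 - C_2$,
\[
 x_i = x_0\, q^{-2i} - a' z_1 q^{d+1}(q+q^{-1})\bigl(q^{-2i} - q^{-4i}\bigr),
\]
and rewriting $q^{-2i} - q^{-4i} = q^{-3i}(q^i - q^{-i})$ yields exactly \eqref{eq:solxi}. The computation is essentially routine; the only point requiring care is the repeated use of the hypothesis that $q$ is not a root of unity, which guarantees that $q^{-2}-q^{-4}$, $q^3-q^{-3}$, and $q-q^{-1}$ are all nonzero at the moments they are divided out. I expect the only (minor) obstacle to be recognizing the clean factorizations $(t-q^{-2})(t-q^{-4})$ and $q^4 - \beta + q^{-4} = (q^3-q^{-3})(q-q^{-1})$, after which the two recurrences combine immediately to pin down $C_1$ and $C_2$.
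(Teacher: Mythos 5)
Your argument is correct and uses exactly the same ingredients as the paper, whose proof is the one-line remark that \eqref{eq:solxi} is ``routinely obtained from \eqref{eq:xizi} for $i=1$ and \eqref{eq:xi} for $i=1,2,\ldots,d-1$''; you have merely organized the routine computation by solving the characteristic equation of \eqref{eq:xi} and fixing the constant $C_2$ from \eqref{eq:xizi}. The factorizations and the cancellations of $q^{-2}-q^{-4}$, $q^3-q^{-3}$, $q-q^{-1}$ (all nonzero since $q$ is not a root of unity) check out, so this is a valid filling-in of the paper's ``routine'' step.
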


\begin{proof}
Routinely obtained from \eqref{eq:xizi} for $i=1$ and \eqref{eq:xi} for $i=1,2,\ldots,d-1$.
\end{proof}

\begin{lemma}   \label{lem:solyi}   \samepage
\ifDRAFT {\rm lem:solyi}. \fi
With the above notation,
for $1 \leq i \leq d$, $y_i$ is equal to $q^{-i}(q^i - q^{-i})$ times
\begin{align*}
 &  {a'}^2 q^{2d-3i+3}(q^{i-1}-q^{1-i})z_1 + a a' q (q+q^{-1})z_1 + b b' q^{i-2}(q^i+q^{-i})z_1^{-1}  
\\ & \qquad\qquad
 - q^{1-i}(a q^{i-d-1} + a' q^{d-i+1}) x_0 - (q^{d+1}+q^{-d-1})\xi + (a+a')(b+b').
\end{align*}
\end{lemma}

\begin{proof}
Obtained from \eqref{eq:xi2} for $i=1$ and \eqref{eq:xiyi} for $i=1,2,\ldots,d-1$
using Lemma \ref{lem:solxi}.
\end{proof}

We first consider the case that $a \neq a' q^{2d+2}$ and $a' \neq a q^{2d+2}$.

\begin{lemma}          \label{lem:solxi2}   \samepage
\ifDRAFT {\rm lem:solxi2}. \fi
With the above notation,
assume $a \neq a' q^{2d+2}$.
Then
\begin{equation}           \label{eq:solxi2}
 \xi = q^d (a a' q z_1 + b b' q^{-1} z_1^{-1}) - a q x_0 + a(b+b')q^{d+1}.
\end{equation}
\end{lemma}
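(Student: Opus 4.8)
The statement to prove is Lemma~\ref{lem:solxi2}: under the assumption $a \neq a' q^{2d+2}$, the scalar $\xi$ equals the closed-form expression \eqref{eq:solxi2}. Let me think about what's available. We have two independent formulas for the $y_i$: one from Lemma~\ref{lem:solyi}, which expresses each $y_i$ explicitly in terms of $x_0$, $z_1$, $\xi$ and the eigenvalue data, and the boundary relations from Lemma~\ref{lem:y1yd} and Lemma~\ref{lem:y1}, which couple $y_1$ (and $y_d$) to $x_0$, $\omega$, $\eta$, $\eta^*$, and $\varrho^*$. The strategy is to combine these: the explicit formula gives $y_1$, and a boundary equation gives a *second* expression for $y_1$; equating the two produces a single scalar equation whose unknown is $\xi$, and solving it yields \eqref{eq:solxi2}.

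**The main steps.** First I would specialize Lemma~\ref{lem:solyi} to $i=1$ to get an expression for $y_1$ linear in $\xi$. Next I would take one of the boundary relations --- I expect \eqref{eq:y1} from Lemma~\ref{lem:y1yd} is the cleanest, since it involves only $y_1$, $x_0$, and the AW-scalars $\omega, \eta, \varrho$ --- and substitute the closed forms from Lemma~\ref{lem:scalars} (recall $\gamma = \gamma^* = 0$, and $\varrho, \omega, \eta$ are given by \eqref{eq:crho}, \eqref{eq:comega}, \eqref{eq:ceta}) together with the eigenvalue formula \eqref{eq:thi} with $\alpha = 0$. This converts \eqref{eq:y1} into another expression for the coefficient multiplying $y_1$ and for the right-hand side, again linear in $\xi$. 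Equating the two expressions for $y_1$ (or, equivalently, substituting the Lemma~\ref{lem:solyi} value of $y_1$ into \eqref{eq:y1}) gives one polynomial identity in $\xi$, $x_0$, $z_1$, $a$, $a'$, $b$, $b'$, $q$. The plan is then to collect the terms and solve linearly for $\xi$.

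**Where the assumption enters.** The equation one obtains will be linear in $\xi$, so solving requires the coefficient of $\xi$ to be nonzero --- and this is exactly where the hypothesis $a \neq a' q^{2d+2}$ is used. I expect that after simplification the coefficient of $\xi$ factors as a nonzero scalar multiple of $(a - a' q^{2d+2})$ (or something proportional to it), so dividing by it is legitimate precisely under the stated assumption. This is the conceptual heart of the lemma: the parameter restriction guarantees the linear equation is nondegenerate and hence determines $\xi$ uniquely in the asserted form. The companion case $a' \neq a q^{2d+2}$ presumably runs symmetrically and would be handled by a parallel lemma, which explains the phrasing ``we first consider the case'' in the surrounding text.

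**The main obstacle.** The real work is the algebraic simplification: substituting $\beta = q^2 + q^{-2}$ and all the $q$-power closed forms, then showing that the mess collapses so that the coefficient of $\xi$ is (a unit times) $a - a' q^{2d+2}$ and the remaining terms assemble into $q^d(aa'qz_1 + bb'q^{-1}z_1^{-1}) - aqx_0 + a(b+b')q^{d+1}$. Because $q$ is not a root of unity, no spurious cancellations can make coefficients vanish accidentally, so every $q$-power monomial must be tracked honestly. I expect this to be a long but mechanical $q$-shifted-factorial computation rather than a step requiring a new idea; the author's proof indeed says it is ``routinely obtained,'' and I would carry it out by grouping terms according to the parameters $aa'$, $bb'$, and $x_0$ separately and matching the coefficient of $\xi$ against $a - a' q^{2d+2}$.
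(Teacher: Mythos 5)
Your proposal matches the paper's proof: the paper likewise substitutes the $i=1$ case of Lemma \ref{lem:solyi} into \eqref{eq:y1} and simplifies, obtaining an identity that factors as a nonzero scalar times $(a q^{-d-1} - a' q^{d+1})$ times $\bigl(\xi - q^d(aa'qz_1 + bb'q^{-1}z_1^{-1}) + aqx_0 - a(b+b')q^{d+1}\bigr)$, and then invokes $a \neq a'q^{2d+2}$ to kill the first factor and solve for $\xi$ --- exactly the role you predicted for the hypothesis. No gaps; the approach and the point where the assumption enters are identical.
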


\begin{proof}
In \eqref{eq:y1}, eliminate $y_1$ using Lemma \ref{lem:solyi}
and simplify the result to find
\begin{align*}
 & q^{-d-1}(q-q^{-1})(q^2-q^{-2})(a q^{-d-1} - a' q^{d+1}) 
\\
& \qquad\qquad \times
 \big(\xi - q^d (a a' q z_1 + b b' q^{-1} z_1^{-1}) + a q x_0 - a(b+b')q^{d+1} \big) = 0.
\end{align*}
By this and  $a q^{-d-1} \neq a' q^{d+1}$ we find \eqref{eq:solxi2}.
\end{proof}

\begin{lemma}   \label{lem:x0xi}   \samepage
\ifDRAFT {\rm lem:x0xi}. \fi
With the above notation,
assume $a \neq a' q^{2d+2}$ and $a' \neq a q^{2d+2}$.
Then
\begin{align}
 x_0 &= a' (q^d-q^{-d}) z_1 + (b+b') q^d,                 \label{eq:x0}
\\
  \xi &= a a' q^{1-d} z_1 + b b' q^{d-1} z_1^{-1}.     \label{eq:solxi3}
\end{align}
\end{lemma}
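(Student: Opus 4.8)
The plan is to complement the single expression for $\xi$ found in Lemma \ref{lem:solxi2} with a second one, and then eliminate $\xi$ to pin down $x_0$. Lemma \ref{lem:solxi2} extracted \eqref{eq:solxi2} from the $(0,0)$-entry relation \eqref{eq:y1}; the natural companion, not yet exploited, is the $(d,d)$-entry relation \eqref{eq:yd}. First I would substitute into \eqref{eq:yd} the closed form for $y_d$ from Lemma \ref{lem:solyi} (taken at $i=d$) together with the closed form for $x_d$ from Lemma \ref{lem:solxi} (taken at $i=d$). Since Lemma \ref{lem:solyi} already writes every $y_i$, and Lemma \ref{lem:solxi} every $x_i$, in terms of $x_0$, $z_1$, and $\xi$, this turns \eqref{eq:yd} into a single relation among $x_0$, $z_1$, and $\xi$.

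I expect \eqref{eq:yd} to collapse, exactly as \eqref{eq:y1} did in the proof of Lemma \ref{lem:solxi2}, into a factored form
\[
  (\text{nonzero scalar}) \cdot (a' q^{-d-1} - a q^{d+1}) \cdot \bigl( \xi - (\text{expression in } x_0,\, z_1) \bigr) = 0 .
\]
Here the factor $a' q^{-d-1} - a q^{d+1}$ is nonzero precisely by the hypothesis $a' \neq a q^{2d+2}$; this is the $a \leftrightarrow a'$ mirror of the hypothesis $a \neq a' q^{2d+2}$ used in Lemma \ref{lem:solxi2}, reflecting the $i \mapsto d-i$ reversal symmetry of the data $\th_i = \alpha + a q^{2i-d} + a' q^{d-2i}$, under which $a$ and $a'$ are interchanged. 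Dividing out the nonzero factor yields a companion expression $\xi = (\text{expression in } x_0,\, z_1)$.

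With this companion expression and \eqref{eq:solxi2} both in hand, I would equate them. The term $q^d(a a' q z_1 + b b' q^{-1} z_1^{-1})$, being symmetric in $a$ and $a'$, should appear in both expressions and cancel, leaving a single linear equation in $x_0$; solving it produces \eqref{eq:x0}. Substituting \eqref{eq:x0} back into \eqref{eq:solxi2} then gives \eqref{eq:solxi3}: using $-a q x_0 = -a a' q (q^d - q^{-d}) z_1 - a (b+b') q^{d+1}$, the $(b+b') q^{d+1}$ terms cancel and the $a a'$ terms combine to $a a' q^{1-d} z_1$, leaving $\xi = a a' q^{1-d} z_1 + b b' q^{d-1} z_1^{-1}$.

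The main obstacle is the $q$-power bookkeeping needed to bring \eqref{eq:yd} into the displayed factored form. One must track the coefficient of $x_0$, which picks up contributions both from the $x_d$ substitution and from the $y_d$ substitution, and confirm that the coefficient of $\xi$ factors as $a' q^{-d-1} - a q^{d+1}$ up to a nonvanishing scalar, so that the hypothesis $a' \neq a q^{2d+2}$ is exactly what is required. Once that factorization is verified, the remaining steps are routine linear algebra in $x_0$ and $\xi$.
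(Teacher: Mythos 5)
Your route is essentially the paper's own. The paper also substitutes the closed forms of $x_d$ and $y_d$ from Lemmas \ref{lem:solxi} and \ref{lem:solyi} into \eqref{eq:yd} and combines the outcome with \eqref{eq:solxi2}; the only organizational difference is that the paper eliminates $\xi$ from \eqref{eq:yd} using \eqref{eq:solxi2} and then solves the resulting equation for $x_0$, whereas you first solve \eqref{eq:yd} for $\xi$ and then equate the two expressions. These are the same linear elimination performed in the opposite order. Your key factorization claim is correct: after expressing $\omega$ and $\eta$ via Lemma \ref{lem:scalars}, the coefficient of $\xi$ in the substituted \eqref{eq:yd} is a nonzero multiple of $aq^{d+1}-a'q^{-d-1}$ (the other factors are powers of $q$ and quantities of the form $q^m\pm q^{-m}$, nonzero since $q$ is not a root of unity), so dividing it out consumes exactly the hypothesis $a'\neq aq^{2d+2}$, and the remaining coefficients are also divisible by this factor, so the whole equation collapses as you predict.

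The one point you dismiss as ``routine linear algebra'' actually needs an argument. After equating your companion expression with \eqref{eq:solxi2}, the coefficient of $x_0$ in the resulting linear equation is, up to a nonzero scalar, $aq^{d-1}-a'q^{1-d}$. The hypothesis $a'\neq aq^{2d+2}$ has already been spent on dividing by the $\xi$-coefficient and says nothing about this quantity; to solve for $x_0$ you must separately rule out $a'=aq^{2d-2}$, which holds because otherwise $\th_{d-1}=\th_d$, contradicting the distinctness of the eigenvalues of $A$. This is precisely the step the paper makes explicit: its factored equation carries both factors $aq^{d-1}-a'q^{1-d}$ and $aq^{d+1}-a'q^{-d-1}$, and it disposes of the first by the remark that $a'\neq aq^{2d-2}$ since $\th_{d-1}\neq\th_d$. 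Once you add this observation, the derivation of \eqref{eq:x0}, and then of \eqref{eq:solxi3} by back-substitution into \eqref{eq:solxi2}, goes through as you describe.
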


\begin{proof}
In \eqref{eq:yd}, eliminate $x_d$ and $y_d$ using Lemmas \ref{lem:solxi} and \ref{lem:solyi}.
Then eliminate $\xi$ using \eqref{eq:solxi2}. 
Simplify the result using \eqref{eq:thi} and Lemma \ref{lem:scalars} to find
\begin{align*}
 & q^{-d}(q-q^{-1})(q^{d+1}-q^{-d-1})(a q^{d-1} - a' q^{1-d})(a q^{d+1} - a' q^{-d-1}) 
\\  & \qquad\qquad\qquad\qquad\qquad\qquad\qquad \times
 \big( x_0 - a' (q^d-q^{-d}) z_1 - (b+b') q^d \big) = 0.
\end{align*}
We have $a' \neq a q^{2d-2}$; otherwise $\th_{d-1} = \th_d$.
By our assumption  $a' \neq a q^{2d+2}$.
By these comments we get \eqref{eq:x0}.
Line \eqref{eq:solxi3} follows from \eqref{eq:solxi2} and \eqref{eq:x0}.
\end{proof}

\begin{lemma}     \label{lem:case1}   \samepage
\ifDRAFT {\rm lem:case1}. \fi
With the above notation,
assume $a \neq a' q^{2d+2}$ and $a' \neq a q^{2d+2}$.
Then
\begin{align*}
  x_i &= (b+b') q^{d-2i} - a'  q^{1-2i} (q^{d+1}+q^{-d-1}-q^{d-2i-1}-q^{d-2i+1}) z_1
            &&   (0 \leq i \leq d),
\\
 y_i &= q^{d-1}(q^i-q^{-i})(q^{i-d-1}-q^{d-i+1})
        (b+a' z_1 q^{2-2i})(b' + a' z_1 q^{2-2i})z_1^{-1}  && (1 \leq i \leq d),
\\
 z_i &=  q^{2-2i} z_1    && (1 \leq i \leq d). 
\end{align*}
\end{lemma}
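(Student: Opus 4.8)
The plan is to assemble the three closed forms by substituting the values of $x_0$ and $\xi$ supplied by Lemma \ref{lem:x0xi} into the formulas already obtained for the entries. The $z_i$ formula needs no work at all: it is precisely \eqref{eq:zi}, since we have chosen $q$ so that $z_i = z_1 q^{2-2i}$ holds. The hypotheses $a\neq a' q^{2d+2}$ and $a'\neq a q^{2d+2}$ will enter only through the appeal to Lemma \ref{lem:x0xi}; once \eqref{eq:x0} and \eqref{eq:solxi3} are in hand, the remaining manipulations are unconditional.

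For $x_i$ I would start from $x_i = q^{-2i}x_0 - a' z_1 q^{d-3i+1}(q+q^{-1})(q^i-q^{-i})$ in Lemma \ref{lem:solxi} and insert $x_0 = a'(q^d-q^{-d})z_1 + (b+b')q^d$ from \eqref{eq:x0}. Expanding $(q+q^{-1})(q^i-q^{-i}) = q^{i+1}+q^{i-1}-q^{1-i}-q^{-i-1}$ and separating the $(b+b')$ part from the $a'z_1$ part, the $(b+b')$ contribution collapses at once to $(b+b')q^{d-2i}$, while the $a'z_1$ terms combine to $-a'q^{1-2i}(q^{d+1}+q^{-d-1}-q^{d-2i-1}-q^{d-2i+1})z_1$. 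This is a short, deterministic simplification.

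The real content is the $y_i$ formula. Here the plan is to substitute \emph{both} $x_0$ from \eqref{eq:x0} and $\xi = aa'q^{1-d}z_1 + bb'q^{d-1}z_1^{-1}$ from \eqref{eq:solxi3} into the bracketed expression of Lemma \ref{lem:solyi}, which then becomes a Laurent polynomial in $z_1$ involving only the powers $z_1^{-1}$, $z_1^0$, $z_1^1$. I would then check that $q^{-i}(q^i-q^{-i})$ times this bracket agrees with the claimed product; after cancelling the common factor $(q^i-q^{-i})$ and using $q^{i+d-1}(q^{i-d-1}-q^{d-i+1}) = q^{2i-2}-q^{2d}$, this reduces to matching the three $z_1$-coefficients of the bracket against the expansion $(b+a'z_1q^{2-2i})(b'+a'z_1q^{2-2i}) = bb'+(b+b')a'z_1q^{2-2i}+{a'}^2z_1^2q^{4-4i}$. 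Concretely, the $bb'z_1^{-1}$ terms (the direct term plus the $\xi$-substitution) sum to $(q^{2i-2}-q^{2d})bb'$; the ${a'}^2z_1$ terms (the direct term plus the ${a'}^2$ part of the $x_0$-substitution) sum to ${a'}^2(q^{2-2i}-q^{2d-4i+4})$; and the constant $a'(b+b')$ terms sum to $a'(b+b')(1-q^{2d-2i+2})$, which are exactly the three coefficients produced by the target product.

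The step I expect to be the main obstacle is confirming that every term involving $a$ disappears, since the target formula contains only $a'$. This is where the algebra must conspire rather than the hypotheses intervene: the $aa'z_1$ contributions — the direct term $aa'q(q+q^{-1})z_1$ together with the $aa'$ pieces of the $x_0$- and $\xi$-substitutions — cancel identically (their coefficients sum to $q(q+q^{-1})-(1-q^{-2d})-(q^2+q^{-2d})=0$), and the constant $a(b+b')$ coming from the $x_0$-substitution is annihilated by the $a(b+b')$ piece of the leading constant $(a+a')(b+b')$. Verifying these two cancellations, and then checking that the surviving $a'$-terms assemble into the advertised product, is the crux; everything else is bookkeeping with the identity $\beta = q^2+q^{-2}$.
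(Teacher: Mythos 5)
Your proposal is correct and follows the paper's own route exactly: the paper also obtains $x_i$ and $y_i$ by substituting the values of $x_0$ and $\xi$ from Lemma \ref{lem:x0xi} into Lemmas \ref{lem:solxi} and \ref{lem:solyi}, and takes $z_i$ from Lemma \ref{lem:zi}. Your detailed coefficient checks (including the cancellation of all $a$-terms) are accurate and simply spell out what the paper leaves as routine.
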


\begin{proof}
The values of $x_i$ and $y_i$ are obtained from Lemmas \ref{lem:solxi}, \ref{lem:solyi} and \eqref{eq:x0}.
The value of $z_i$ is given in Lemma \ref{lem:zi}.
\end{proof}

Next consider the case $a = a' q^{2d+2}$.
Note that $a \neq 0$ in this case; otherwise $\th_i = 0$ for $0 \leq i \leq d$.

\begin{lemma}    \label{lem:case2x0}   \samepage
\ifDRAFT {\rm lem:case2x0}. \fi
With the above notation,
assume $a = a' q^{2d+2}$.
Then
\[
 x_0 = a q^{-d-2}(1+q^2-q^{-2d}) z_1 + a^{-1} b b' q^{3d} z_1^{-1}
          - a^{-1} q^{2d+1} \xi + (b+b') q^d.
\]
\end{lemma}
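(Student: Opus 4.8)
The plan is to read off a single linear relation between $x_0$ and $\xi$ from the boundary equation \eqref{eq:yd}, and then solve it for $x_0$. The reason \eqref{eq:y1} is no longer available is instructive: in the proof of Lemma \ref{lem:solxi2} the equation \eqref{eq:y1}, after $y_1$ is eliminated through Lemma \ref{lem:solyi}, collapses to a nonzero scalar times $(aq^{-d-1}-a'q^{d+1})$ times the bracket appearing in \eqref{eq:solxi2}. Under the present hypothesis $a=a'q^{2d+2}$ we have $aq^{-d-1}-a'q^{d+1}=0$, so \eqref{eq:y1} holds vacuously and imposes no constraint. Hence \eqref{eq:solxi2} is unavailable, and the relation must be obtained from the other end of the matrix.

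First I would take \eqref{eq:yd}, eliminate $x_d$ by Lemma \ref{lem:solxi} (at $i=d$) and $y_d$ by Lemma \ref{lem:solyi} (at $i=d$), and insert the closed forms $\th_i=aq^{2i-d}+a'q^{d-2i}$, $\beta=q^2+q^{-2}$, together with $\varrho$, $\omega$, $\eta$ from \eqref{eq:crho}, \eqref{eq:comega}, \eqref{eq:ceta}. The key structural observation is that \eqref{eq:yd} then becomes a genuinely \emph{linear} relation in the unknowns $x_0$ and $\xi$: the prefactor $\th_{d-1}+\th_d-\beta\th_d$ carries no $x_0$, while both $x_d$ and $y_d$ are, by Lemmas \ref{lem:solxi} and \ref{lem:solyi}, mere linear combinations of $1$, $x_0$, $\xi$, $z_1$, $z_1^{-1}$. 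Consequently no cross terms such as $x_0\xi$ or $z_1\xi$ arise, and the equation reduces to the shape $c_{x_0}\,x_0+c_\xi\,\xi+c_1 z_1+c_{-1}z_1^{-1}+c_0=0$ with scalar coefficients. This is exactly what fails for \eqref{eq:y12}: there the coefficient of $z_1$ inside the bracket multiplying $y_1$ is the nonzero scalar $aq^{-d}(1-q^4)$, which produces an uncancellable $z_1\xi$ term, so \eqref{eq:y12} is bilinear rather than linear. This is why the mirror equation \eqref{eq:yd}, not \eqref{eq:y12}, is the right tool here.

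Next I would impose $a=a'q^{2d+2}$, i.e. substitute $a'=aq^{-2d-2}$, and simplify each coefficient. Using $\gamma=0$ one checks that the prefactor equals $\th_d-\th_{d+1}$, which factors as $a'q^{-d-2}(1-q^2)(q^{4d+4}-1)$ and is therefore nonzero since $q$ is not a root of unity; so \eqref{eq:yd} is not itself vacuous. One then verifies that $c_{x_0}$ is a nonzero scalar multiple of $a$ (here $a\neq 0$, since otherwise all $\th_i$ would equal $0$). Dividing by $c_{x_0}$ and collecting the surviving terms yields $x_0$ as an explicit scalar combination of $z_1$, $z_1^{-1}$, $\xi$ and the constant $(b+b')q^d$, which I would match against the stated expression term by term.

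The main obstacle is the bookkeeping in this last step: one must track the $z_1$-, $z_1^{-1}$-, $\xi$- and constant-coefficients through the substitutions and the collapse $a'=aq^{-2d-2}$, and confirm that they simplify to $aq^{-d-2}(1+q^2-q^{-2d})$, $a^{-1}bb'q^{3d}$, $-a^{-1}q^{2d+1}$ and $(b+b')q^d$ respectively. I would keep the intermediate expressions symmetric in $a,a'$ for as long as possible and apply $a'=aq^{-2d-2}$ only at the very end, so that the cancellations producing the clean $a^{-1}$-factors, and the nonvanishing of $c_{x_0}$, remain transparent.
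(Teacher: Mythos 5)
Your proposal is correct and follows the same strategy as the paper: evaluate a boundary instance of the Askey--Wilson relations at the bottom corner, eliminate the unknown entries via Lemmas \ref{lem:solxi} and \ref{lem:solyi}, observe that the result is linear in $x_0,\xi,z_1,z_1^{-1}$ with no cross terms, and solve for $x_0$. The one concrete difference is the choice of boundary equation: the paper uses \eqref{eq:xiyi} at $i=d$ (the $(d,d-1)$-entry of \eqref{eq:AW1}, exploiting the convention $y_{d+1}=0$), whereas you use \eqref{eq:yd} (the $(d,d)$-entry, which brings in $\eta$ and $\varrho$ as well as $\omega$). These are genuinely different entries, but after the eliminations they produce proportional linear relations: one can check that the $x_0$-coefficient of your relation equals a nonzero multiple of $(aq^{d-1}-a'q^{1-d})(aq^{d+1}-a'q^{-d-1})$ divided by $q^{2d+2}-1$, which is nonzero under $a=a'q^{2d+2}$, and the remaining coefficient ratios reduce to exactly the stated expression (e.g.\ the $z_1$-coefficient ratio collapses to $-a^2q^{-3d-3}(1+q^2-q^{-2d})$ after substituting $a'=aq^{-2d-2}$). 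Your diagnostic remarks are also accurate: the factor $aq^{-d-1}-a'q^{d+1}$ in the proof of Lemma \ref{lem:solxi2} does vanish here, so \eqref{eq:y1} is vacuous, and \eqref{eq:y12} is bilinear (it carries $z_1\xi$ and $x_0\xi$ cross terms) and is reserved for Lemma \ref{lem:case2subcases}. The paper's choice requires eliminating four entries ($x_{d-1},x_d,y_{d-1},y_d$) but avoids $\eta$ and $\varrho$; yours needs only $x_d,y_d$ but requires the closed forms \eqref{eq:crho} and \eqref{eq:ceta}. Either route lands on the same formula.
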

 
\begin{proof}
In \eqref{eq:xiyi} for $i=d$,
eliminate $y_{d-1}$, $y_d$, $x_{d-1}$, $x_d$ using Lemmas \ref{lem:solxi} and \ref{lem:solyi}.
Then simplify the result to find that
$q^{3d-2}(q^{2d+2} - q^{-2d-2})$ times
\[
 - a x_0 + a^2 q^{-d-2}(1+q^2 - q^{-2d})z_1 +  b b' q^{3d} z_1^{-1} - q^{2d+1} \xi + a(b+b')q^d
\]
is zero. The result follows.
\end{proof}

\begin{lemma}    \label{lem:case2subcases}    \samepage
\ifDRAFT {\rm lem:case2subcases}. \fi
With the above notation,
assume $a = a' q^{2d+2}$.
Then at least one of the following {\rm (i)--(iii)} holds:
\begin{itemize}
\item[\rm (i)]
$z_1 = - a^{-1} b q^{2d}$.
\item[\rm (ii)]
$z_1 = - a^{-1} b' q^{2d}$.
\item[\rm (iii)]
$\xi = a^2 q^{-3d-1} z_1 + b b' q^{d-1} z_1^{-1}$.
\end{itemize}
\end{lemma}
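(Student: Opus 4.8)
The plan is to bring in the one boundary relation of Lemma~\ref{lem:y1} that has not yet been used in this case. In the generic case the relation \eqref{eq:y1} pinned down $\xi$, but here the hypothesis $a=a'q^{2d+2}$ forces the factor $aq^{-d-1}-a'q^{d+1}$ occurring in Lemma~\ref{lem:solxi2} to vanish, so \eqref{eq:y1} collapses to an identity and yields nothing. Instead I would evaluate \eqref{eq:y12}, the $(0,0)$-entry of the second Askey--Wilson relation, which is so far unused in this case.

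First I would substitute into \eqref{eq:y12} the closed forms for $x_1$ and $y_1$ coming from Lemmas~\ref{lem:solxi} and \ref{lem:solyi} (at $i=1$), together with the evaluations of $\beta$, $\varrho^*$, $\omega$, $\eta^*$, $\th_0$, $\th_2$ supplied by Lemma~\ref{lem:scalars} and \eqref{eq:thi}. This turns \eqref{eq:y12} into a relation among $x_0$, $z_1$ and $\xi$ alone. Next I would eliminate $x_0$ using Lemma~\ref{lem:case2x0}, obtaining a single equation in the two unknowns $z_1$ and $\xi$, in which $z_1^{-1}$ and $z_1^{-2}$ occur. Throughout I would use the case hypothesis in the form $a'=aq^{-2d-2}$; recall that $a\neq0$ here, so this substitution and the $a^{-1}$ appearing below are legitimate.

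The decisive point is then a factorization. A priori the substitution produces a term $x_0^2$, hence a term quadratic in $\xi$; but the $\xi^2$-contributions from $(\beta-2)\th_0 x_0^2$ and from $\omega x_0$ cancel, so after clearing denominators by multiplying through by $z_1^2$ the equation is only linear in $\xi$ and of degree $4$ in $z_1$. I expect it to factor as a nonzero scalar times
\[
 (a z_1 + b q^{2d})(a z_1 + b' q^{2d})
   (a^2 q^{-3d-1} z_1^2 - \xi z_1 + b b' q^{d-1}).
\]
Since $\F$ is a field, the product vanishes only if one of the three factors vanishes: the first factor gives (i), the second gives (ii), and the third (using $z_1\neq0$) gives (iii). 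This proves the lemma.

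The main obstacle is exactly this factorization: verifying that the degree-$4$ polynomial in $z_1$, with its lengthy $q$-dependent coefficients, splits precisely into the three advertised factors. The cancellation of the $\xi^2$-terms is the essential structural fact that makes a clean triple factorization possible; checking it and confirming that the leading scalar is nonzero (this is where $a\neq0$ and $q$ not a root of unity enter, so that $(q-q^{-1})$ and the relevant powers of $q$ do not degenerate) is the only real work, the remainder being the bookkeeping of the substitutions.
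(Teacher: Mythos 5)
Your proposal is correct and follows essentially the same route as the paper: the paper's proof also substitutes the closed forms of $x_1,y_1$ from Lemmas \ref{lem:solxi} and \ref{lem:solyi} into \eqref{eq:y12}, eliminates $x_0$ via Lemma \ref{lem:case2x0}, and obtains precisely the triple factorization you predict (the paper writes it as a nonzero multiple of $(b+az_1q^{-2d})(b'+az_1q^{-2d})(aq^{-2d}z_1+a^{-1}bb'q^{2d}z_1^{-1}-a^{-1}q^{d+1}\xi)$, which agrees with your three factors up to the nonzero scalar $aq^{3d-1}z_1$). The only remaining work is the routine verification of that factorization, exactly as you indicate.
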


\begin{proof}
In \eqref{eq:y12},
eliminate $x_1$ and $y_1$ using Lemmas \ref{lem:solxi} and \ref{lem:solyi}.
Then eliminate  $x_0$ using Lemma \ref{lem:case2x0} to find that
$z_1^{-1}q^{d-1} (q-q^{-1})(q^2 - q^{-2})$ times
\[
    (b+a z_1 q^{-2d})(b' + a z_1 q^{-2d})
        (a q^{-2d} z_1 + a^{-1} b b' q^{2d} z_1^{-1} - a^{-1} q^{d+1} \xi)
\]
is zero.
The result follows.
\end{proof}

\begin{lemma}   \label{lem:case2subcase1}   \samepage
\ifDRAFT {\rm lem:case2subcase1}. \fi
With reference to Lemma \ref{lem:case2subcases}, assume {\rm (i)} holds.
Then
\begin{align*}
 x_i &= - a^{-1} q^{2d-2i+1} \xi + b q^{-2i-1} (q^{d+1}+q^{-d-1} - q^{d-2i-1} - q^{d-2i+1})
                                    && (0 \leq i \leq d),
\\
 y_i &= q^{-d-2i-1} (q^i-q^{-i})(q^{d-i+1}-q^{i-d-1}) 
                ( q^{d+1} \xi + a b q^{-2i} + a b' q^{2i} )        && (1 \leq i \leq d),
\\
 z_i &= - a^{-1} b q^{2d-2i+2}          && (1 \leq i \leq d).
\end{align*}
\end{lemma}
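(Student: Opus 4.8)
The plan is to substitute the two defining relations of the present case into the closed forms already derived, and then simplify. The case hypothesis $a = a'q^{2d+2}$ gives $a' = aq^{-2d-2}$, and condition (i) of Lemma \ref{lem:case2subcases} gives $z_1 = -a^{-1}bq^{2d}$; together these yield the convenient identities $a'z_1 = -bq^{-2}$ and $z_1^{-1} = -ab^{-1}q^{-2d}$, which I would use repeatedly.

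First I would record $z_i$: by Lemma \ref{lem:zi} we have $z_i = z_1 q^{2-2i}$, so substituting $z_1 = -a^{-1}bq^{2d}$ gives immediately $z_i = -a^{-1}bq^{2d-2i+2}$, the third asserted formula. Next I would pin down $x_0$, which is the pivotal intermediate quantity since both Lemma \ref{lem:solxi} and Lemma \ref{lem:solyi} are expressed through it. In Lemma \ref{lem:case2x0}, substituting $z_1$ and $z_1^{-1}$ and simplifying, the $b'$-contributions cancel and one is left with $x_0 = -a^{-1}q^{2d+1}\xi + b(q^{-d-2}-q^{d-2})$, which is precisely the $i=0$ instance of the claimed formula for $x_i$.

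With $x_0$ in hand, the general $x_i$ follows from Lemma \ref{lem:solxi}: I would substitute this $x_0$ together with $a'z_1 = -bq^{-2}$, expand $(q+q^{-1})(q^i-q^{-i})$, and collect powers of $q$. The two contributions proportional to $q^{d-2i-2}$ cancel, the remaining $b$-terms reorganize into $bq^{-2i-1}(q^{d+1}+q^{-d-1}-q^{d-2i-1}-q^{d-2i+1})$, and the $\xi$-term $-a^{-1}q^{2d-2i+1}\xi$ is carried along from $q^{-2i}x_0$, giving the asserted formula for $x_i$.

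The substantive part is the formula for $y_i$. Starting from Lemma \ref{lem:solyi}, I would substitute $a' = aq^{-2d-2}$, $z_1 = -a^{-1}bq^{2d}$, $z_1^{-1} = -ab^{-1}q^{-2d}$, and the value of $x_0$ from the previous step. After clearing the common factor $q^{-i}(q^i-q^{-i})$, the bracketed expression becomes a Laurent polynomial in $q$ whose coefficients are linear in $\xi$ and bilinear in $a,b,b'$, and the goal is to show it equals $q^{-d-i-1}(q^{d-i+1}-q^{i-d-1})(q^{d+1}\xi + abq^{-2i}+ab'q^{2i})$. The main obstacle is verifying this factorization. I would handle it by splitting into the $\xi$-dependent and $\xi$-independent parts. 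For the $\xi$-linear piece one gathers the explicit $-(q^{d+1}+q^{-d-1})\xi$ term with the $\xi$-part of $x_0$ entering through $-q^{1-i}(aq^{i-d-1}+a'q^{d-i+1})x_0$; using $a' = aq^{-2d-2}$ this collapses to $(q^{d-2i+1}-q^{-d-1})\xi$, which is exactly $q^{-d-i-1}(q^{d-i+1}-q^{i-d-1})\,q^{d+1}\xi$. For the $\xi$-free piece, which mixes the $a'^2$, $aa'$, $bb'$ and $b$-part-of-$x_0$ contributions, one checks that it collapses to $q^{-d-i-1}(q^{d-i+1}-q^{i-d-1})(abq^{-2i}+ab'q^{2i})$. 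Factoring $(q^{d-i+1}-q^{i-d-1})$ out of each part and matching the residual linear factor completes the verification; the bookkeeping of $q$-powers is routine but lengthy, and is the one place where care is genuinely needed.
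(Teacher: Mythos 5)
Your proposal is correct and follows exactly the paper's own route: the paper's proof simply states that $x_i$ and $y_i$ are obtained from Lemmas \ref{lem:solxi}, \ref{lem:solyi}, \ref{lem:case2x0} and that $z_i$ comes from Lemma \ref{lem:zi}, which is precisely the substitution-and-simplification you carry out (and your intermediate values, e.g.\ $x_0 = -a^{-1}q^{2d+1}\xi + b(q^{-d-2}-q^{d-2})$ and $a'z_1=-bq^{-2}$, check out). The only difference is that you make the bookkeeping explicit where the paper leaves it to the reader.
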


\begin{proof}
The values of $x_i$ and $y_i$ are obtained from Lemmas \ref{lem:solxi}, \ref{lem:solyi}, \ref{lem:case2x0}.
The value of $z_i$ is given in Lemma \ref{lem:zi}.
\end{proof}

\begin{lemma}   \label{lem:case2subcase2}   \samepage
\ifDRAFT {\rm lem:case2subcase2}. \fi
With reference to Lemma \ref{lem:case2subcases}, assume {\rm (ii)} holds.
Then
\begin{align*}
 x_i &= - a^{-1} q^{2d-2i+1} \xi + b' q^{-2i-1} (q^{d+1}+q^{-d-1} - q^{d-2i-1} - q^{d-2i+1})
                                    && (0 \leq i \leq d),
\\
 y_i &= q^{-d-2i-1} (q^i-q^{-i})(q^{d-i+1}-q^{i-d-1}) 
                ( q^{d+1} \xi + a b q^{2i} + a b' q^{-2i} )        && (1 \leq i \leq d),
\\
 z_i &= - a^{-1} b' q^{2d-2i+2}          && (1 \leq i \leq d).
\end{align*}
\end{lemma}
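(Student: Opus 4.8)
The plan is to follow the proof of Lemma~\ref{lem:case2subcase1} line for line, exploiting the fact that the present case is its mirror image under interchanging $b$ and $b'$. Concretely, all three formulas will be read off from the closed forms already established: $x_i$ and $y_i$ from Lemmas~\ref{lem:solxi}, \ref{lem:solyi}, and~\ref{lem:case2x0}, and $z_i$ from Lemma~\ref{lem:zi}, after substituting the hypothesis $z_1=-a^{-1}b'q^{2d}$ of part~(ii) together with the standing case assumption $a=a'q^{2d+2}$.

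First I would dispose of $z_i$: Lemma~\ref{lem:zi} gives $z_i=q^{2-2i}z_1$, so inserting $z_1=-a^{-1}b'q^{2d}$ yields $z_i=-a^{-1}b'q^{2d-2i+2}$ at once. Next I would compute $x_0$ from Lemma~\ref{lem:case2x0}: that lemma is already free of $a'$, so only $z_1=-a^{-1}b'q^{2d}$ need be substituted, and a short cancellation of the constant terms collapses the expression to $x_0=-a^{-1}q^{2d+1}\xi+b'(q^{-d-2}-q^{d-2})$, which is the $i=0$ instance of the claimed formula. Feeding this $x_0$ into Lemma~\ref{lem:solxi}, and using $a=a'q^{2d+2}$ to replace every occurrence of $a'$ by $aq^{-2d-2}$, then gives $x_i=-a^{-1}q^{2d-2i+1}\xi+b'q^{-2i-1}(q^{d+1}+q^{-d-1}-q^{d-2i-1}-q^{d-2i+1})$ after collecting powers of $q$.

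Finally I would substitute the same data---$a'=aq^{-2d-2}$, $z_1=-a^{-1}b'q^{2d}$, and the value of $x_0$ just found---into the expression of Lemma~\ref{lem:solyi}. The terms regroup into the prefactor $q^{-d-2i-1}(q^i-q^{-i})(q^{d-i+1}-q^{i-d-1})$ multiplying $q^{d+1}\xi+abq^{2i}+ab'q^{-2i}$, giving the asserted $y_i$. I expect the only real difficulty to be bookkeeping rather than anything conceptual: Lemmas~\ref{lem:solxi} and~\ref{lem:solyi} still display $a'$, ${a'}^2$, $aa'$, and $a+a'$ explicitly, so the conversion $a'\mapsto aq^{-2d-2}$ must be applied uniformly before the many powers of $q$ in the $y_i$ simplification telescope. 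This is exactly the computation already carried out for part~(i) in Lemma~\ref{lem:case2subcase1}, transported by the swap $b\leftrightarrow b'$ (which in the inner factor of $y_i$ also exchanges $q^{2i}$ and $q^{-2i}$), so no new phenomenon arises.
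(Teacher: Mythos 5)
Your proposal is correct and follows exactly the paper's route: the paper proves this lemma by the words ``Similar to the proof of Lemma \ref{lem:case2subcase1},'' which in turn reads the values of $x_i$ and $y_i$ off Lemmas \ref{lem:solxi}, \ref{lem:solyi}, \ref{lem:case2x0} and $z_i$ off Lemma \ref{lem:zi}, just as you do with $z_1=-a^{-1}b'q^{2d}$ in place of $-a^{-1}bq^{2d}$. Your intermediate value $x_0=-a^{-1}q^{2d+1}\xi+b'(q^{-d-2}-q^{d-2})$ checks out against the claimed formula at $i=0$, so the bookkeeping is sound.
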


\begin{proof}
Similar to the proof of Lemma \ref{lem:case2subcase1}.
\end{proof}

\begin{lemma}   \label{lem:case2subcase3}   \samepage
\ifDRAFT {\rm lem:case2subcase3}. \fi
With reference to Lemma \ref{lem:case2subcases}, assume {\rm (iii)} holds.
Then
\begin{align*}
 x_i &= (b+b')q^{d-2i} - a q^{-2d-2i-1} (q^{d+1}+q^{-d-1} - q^{d-2i-1} - q^{d-2i+1}) z_1
                                    && (0 \leq i \leq d),
\\
 y_i &=q^{d-1} (q^i-q^{-i})(q^{i-d-1}-q^{d-i+1}) 
                (b + a q^{-2d-2i} z_1)(b' + a q^{-2d-2i} z_1) z_1^{-1}        && (1 \leq i \leq d),
\\
 z_i &= q^{2-2i} z_1          && (1 \leq i \leq d).
\end{align*}
\end{lemma}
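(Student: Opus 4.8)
The plan is to follow the same strategy used in the proofs of Lemmas \ref{lem:case1}, \ref{lem:case2subcase1}, and \ref{lem:case2subcase2}: once $x_0$ and $\xi$ are pinned down in closed form, the stated values of $x_i$, $y_i$, $z_i$ fall out of the general formulas already established. The one feature distinguishing case (iii) from the earlier subcases is that here $z_1$ is left free while $\xi$ is constrained, so the first task is to use the case-(iii) relation to eliminate $\xi$ from $x_0$.

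First I would invoke the hypothesis (iii) of Lemma \ref{lem:case2subcases}, namely $\xi = a^2 q^{-3d-1} z_1 + b b' q^{d-1} z_1^{-1}$, and substitute it into the expression for $x_0$ supplied by Lemma \ref{lem:case2x0} (which applies since we are in the case $a = a' q^{2d+2}$). The crucial point is that the $-a^{-1}q^{2d+1}\xi$ term contributes $-a q^{-d} z_1 - a^{-1} b b' q^{3d} z_1^{-1}$, so the $bb'$ contributions cancel against the existing $a^{-1} b b' q^{3d} z_1^{-1}$ term. Collecting the remaining terms in $z_1$ yields
\[
 x_0 = (b+b') q^d + a\bigl(q^{-d-2} - q^{-3d-2}\bigr) z_1,
\]
which is precisely the $i=0$ instance of the asserted formula for $x_i$.

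Next I would feed this value of $x_0$ into Lemma \ref{lem:solxi}, rewriting the $a'$ there via $a' = a q^{-2d-2}$, to obtain the claimed closed form for $x_i$ at all $i$. For $y_i$ I would substitute both this $x_0$ and the case-(iii) value of $\xi$ into the general expression of Lemma \ref{lem:solyi}, once more using $a' = a q^{-2d-2}$, and simplify; the goal is to show that the resulting Laurent polynomial in $q$ factors as the product $(b + a q^{-2d-2i} z_1)(b' + a q^{-2d-2i} z_1)$ times the prefactor displayed in the statement. Finally, the formula $z_i = q^{2-2i} z_1$ is immediate from Lemma \ref{lem:zi}.

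The only genuine work is the simplification in the $y_i$ step: after eliminating $x_0$ and $\xi$, one must verify that the lengthy expression of Lemma \ref{lem:solyi} collapses into the stated factored form. I expect this to be the main obstacle, though it is a routine rather than a conceptual one—the cancellations are forced by the two relations $a = a' q^{2d+2}$ and the case-(iii) value of $\xi$, and no idea beyond direct substitution and collection of terms is needed.
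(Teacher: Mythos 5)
Your proposal is correct and follows exactly the route the paper takes: the paper's proof reads ``Similar to the proof of Lemma \ref{lem:case2subcase1},'' i.e.\ obtain $x_i$ and $y_i$ from Lemmas \ref{lem:solxi}, \ref{lem:solyi} and \ref{lem:case2x0} (with the case-(iii) value of $\xi$ substituted in) and take $z_i$ from Lemma \ref{lem:zi}. Your intermediate computation $x_0 = (b+b')q^d + a(q^{-d-2}-q^{-3d-2})z_1$ is the correct $i=0$ instance of the stated formula, so the remaining verification is the routine simplification you describe.
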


\begin{proof}
Similar to the proof of Lemma \ref{lem:case2subcase1}.
\end{proof}

Next consider the case $a' = a q^{2d+2}$.
The following lemmas can be shown in a similar way as Lemmas \ref{lem:case2x0}--\ref{lem:case2subcase3}.

\begin{lemma}    \label{lem:case3x0}   \samepage
\ifDRAFT {\rm lem:case2x0}. \fi
With the above notation,
assume $a' = a q^{2d+2}$.
Then
\[
 x_0 = a q^{3d+2} z_1 + a^{-1} b b' q^{d-2} z_1^{-1}-a^{-1}q^{-1}\xi + (b+b') q^d.
\]
\end{lemma}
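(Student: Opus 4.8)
The plan is to deduce Lemma \ref{lem:case3x0} directly from Lemma \ref{lem:solxi2}, which—in contrast to the case $a = a'q^{2d+2}$—remains available here. Recall that Lemma \ref{lem:solxi2} requires only the hypothesis $a \neq a'q^{2d+2}$, and this hypothesis is met under the present assumption $a' = aq^{2d+2}$. First I would observe that $a \neq 0$: if $a = 0$ then $a' = 0$, whence $\th_i = 0$ for all $i$ (recall $\alpha = 0$), contradicting the distinctness of the eigenvalues of $A$. Consequently $a'q^{2d+2} = aq^{4d+4}$, and since $q$ is not a root of unity we have $q^{4d+4}\neq 1$, so $a'q^{2d+2} = aq^{4d+4} \neq a$. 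Thus the hypothesis of Lemma \ref{lem:solxi2} holds.

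With this in hand the computation is immediate. Invoking Lemma \ref{lem:solxi2} gives
\[
 \xi = q^d(aa'qz_1 + bb'q^{-1}z_1^{-1}) - aqx_0 + a(b+b')q^{d+1}.
\]
Since $a\neq 0$, I would solve this relation for $x_0$, obtaining
\[
 x_0 = a'q^d z_1 + a^{-1}bb'q^{d-2}z_1^{-1} - a^{-1}q^{-1}\xi + (b+b')q^d,
\]
and then substitute $a' = aq^{2d+2}$, which replaces the first term $a'q^dz_1$ by $aq^{3d+2}z_1$ and produces exactly the asserted formula. No evaluation of new Askey-Wilson entries is needed.

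I would also record the structural point explaining why this case is ``similar'' to $a = a'q^{2d+2}$. The two degenerate cases are governed by the linear relations \eqref{eq:y1} and \eqref{eq:yd}, arising from the $(0,0)$- and $(d,d)$-entries of \eqref{eq:AW1}: \eqref{eq:y1} yields a factor proportional to $a - a'q^{2d+2}$ (the source of Lemma \ref{lem:solxi2}), while \eqref{eq:yd} yields a factor proportional to $a' - aq^{2d+2}$ (the source of Lemma \ref{lem:x0xi}). When $a = a'q^{2d+2}$ the first factor vanishes, Lemma \ref{lem:solxi2} is lost, and one must replace \eqref{eq:y1} by \eqref{eq:xiyi} at $i=d$, as in Lemma \ref{lem:case2x0}. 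In the present case $a' = aq^{2d+2}$ it is instead the second factor that vanishes, so \eqref{eq:yd} degenerates and Lemma \ref{lem:x0xi} fails; but \eqref{eq:y1}, hence Lemma \ref{lem:solxi2}, survives and delivers $x_0$ at once.

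The main obstacle is therefore not computational but one of recognition: one could be tempted to re-derive the relation laboriously from \eqref{eq:xiyi} in parallel with Lemma \ref{lem:case2x0}, whereas the efficient route is simply to check the applicability of Lemma \ref{lem:solxi2} and rearrange. The only point genuinely requiring care is the verification that $a \neq a'q^{2d+2}$ (equivalently, that $a\neq 0$ and $q$ is not a root of unity); after that the proof is a one-line manipulation. The substantive work of the case $a' = aq^{2d+2}$ is deferred to the subsequent lemmas, which branch into subcases to pin down $\xi$ and $z_1$ mirroring Lemmas \ref{lem:case2subcases}--\ref{lem:case2subcase3}.
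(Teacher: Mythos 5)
Your proof is correct, and it takes a genuinely shorter route than the paper's. The paper disposes of Lemma \ref{lem:case3x0} by declaring it ``similar'' to Lemma \ref{lem:case2x0}, whose proof evaluates \eqref{eq:xiyi} at $i=d$ and eliminates $y_{d-1},y_d,x_{d-1},x_d$ via Lemmas \ref{lem:solxi} and \ref{lem:solyi} --- an entry-by-entry Askey--Wilson computation forced in case 2 by the failure of the hypothesis of Lemma \ref{lem:solxi2}. You instead observe that this hypothesis does \emph{not} fail in case 3: from $a'=aq^{2d+2}$ one gets $a\neq 0$ (else all $\th_i$ vanish) and hence $a'q^{2d+2}=aq^{4d+4}\neq a$ because $q$ is not a root of unity, so Lemma \ref{lem:solxi2} applies verbatim. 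Solving \eqref{eq:solxi2} for $x_0$ gives $x_0=a'q^dz_1+a^{-1}bb'q^{d-2}z_1^{-1}-a^{-1}q^{-1}\xi+(b+b')q^d$, and substituting $a'=aq^{2d+2}$ yields exactly the stated identity; I checked the algebra and it matches. Your structural remark --- that \eqref{eq:y1} degenerates only when $a=a'q^{2d+2}$ while \eqref{eq:yd} degenerates only when $a'=aq^{2d+2}$, so exactly one of Lemmas \ref{lem:solxi2} and \ref{lem:x0xi} survives in each degenerate case --- is accurate and explains why the shortcut is available here but not in case 2. What the paper's template buys is uniformity across the three cases; what your argument buys is a one-line derivation with no new entry evaluations. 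Either is acceptable; yours is arguably preferable as a written proof of this particular lemma.
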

 
\begin{lemma}    \label{lem:case3subcases}    \samepage
\ifDRAFT {\rm lem:case3subcases}. \fi
With the above notation,
assume $a' = a q^{2d+2}$.
Then at least one of the following {\rm (i)--(iii)} holds:
\begin{itemize}
\item[\rm (i)]
$z_1 = - a^{-1} b q^{-2}$.
\item[\rm (ii)]
$z_1 = - a^{-1} b' q^{-2}$.
\item[\rm (iii)]
$\xi = a^2 q^{d+3} z_1 + b b' q^{d-1} z_1^{-1}$.
\end{itemize}
\end{lemma}

\begin{lemma}   \label{lem:case3subcase1}   \samepage
\ifDRAFT {\rm lem:case3subcase1}. \fi
With reference to Lemma \ref{lem:case3subcases}, assume {\rm (i)} holds.
Then
\begin{align*}
 x_i &= - a^{-1} q^{-2i-1} \xi + b q^{2d-2i+1} (q^{d+1}+q^{-d-1} - q^{d-2i-1} - q^{d-2i+1})
                                    && (0 \leq i \leq d),
\\
 y_i &= q^{3d-2i+3} (q^i-q^{-i})(q^{d-i+1}-q^{i-d-1}) 
                ( q^{-d-1} \xi + a b q^{2d-2i+2} + a b' q^{2i-2d-2} )        && (1 \leq i \leq d),
\\
 z_i &= - a^{-1} b q^{-2i}          && (1 \leq i \leq d).
\end{align*}
\end{lemma}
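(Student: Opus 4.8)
The plan is to mirror the proof of Lemma~\ref{lem:case2subcase1}, since Lemma~\ref{lem:case3subcase1} is its exact counterpart under the case hypothesis $a'=aq^{2d+2}$ together with subcase (i) of Lemma~\ref{lem:case3subcases}, namely $z_1=-a^{-1}bq^{-2}$. Each of the three sequences $\{x_i\}$, $\{y_i\}$, $\{z_i\}$ will be produced by inserting the closed data $a'=aq^{2d+2}$ and $z_1=-a^{-1}bq^{-2}$, together with a closed form for $x_0$, into the recurrence solutions already recorded in Lemmas~\ref{lem:solxi} and \ref{lem:solyi}, and into Lemma~\ref{lem:zi}.

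First I would dispose of the routine sequences. The value of $z_i$ is read directly from Lemma~\ref{lem:zi}: since $z_i=z_1q^{2-2i}$, substituting $z_1=-a^{-1}bq^{-2}$ yields $z_i=-a^{-1}bq^{-2i}$, as claimed. Next I would extract a closed form for $x_0$ in terms of $\xi$ from Lemma~\ref{lem:case3x0}; substituting $z_1=-a^{-1}bq^{-2}$ (so that $z_1^{-1}=-ab^{-1}q^{2}$) collapses the $z_1$ and $z_1^{-1}$ terms and leaves $x_0=-a^{-1}q^{-1}\xi+bq^{d}-bq^{3d}$. Feeding this $x_0$ into Lemma~\ref{lem:solxi}, and using $-a'z_1=bq^{2d}$, I would expand $(q+q^{-1})(q^i-q^{-i})$, cancel the two $q^{3d-2i}$ contributions, and recognize the four surviving monomials in $b$ as the bracket $q^{d+1}+q^{-d-1}-q^{d-2i-1}-q^{d-2i+1}$ scaled by $bq^{2d-2i+1}$, which gives the stated formula for $x_i$.

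The main obstacle is the formula for $y_i$, which I would obtain from Lemma~\ref{lem:solyi}. That expression carries a six-term bracket in ${a'}^2$, $aa'$, $bb'$, $x_0$, $\xi$ and $(a+a')(b+b')$, multiplied by the prefactor $q^{-i}(q^i-q^{-i})$. After substituting $a'=aq^{2d+2}$, $z_1=-a^{-1}bq^{-2}$ and the closed form of $x_0$, the tracking of powers of $q$ is delicate. The relation $a'=aq^{2d+2}$ is the key lever: it lets me eliminate $a'$ everywhere and recombine powers, and in particular it cancels the two $q^{-d-1}$ contributions to the $\xi$-term so that the coefficient of $\xi$ in the bracket reduces to $q^{3d-2i+3}-q^{d+1}=q^{2d-i+2}(q^{d-i+1}-q^{i-d-1})$. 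I would organize the remaining work by separating the bracket into its $\xi$-linear part and its $\xi$-free part, simplifying each against the target, and then verifying that after multiplication by the prefactor $q^{-i}(q^i-q^{-i})$ the full product assembles into $q^{3d-2i+3}(q^i-q^{-i})(q^{d-i+1}-q^{i-d-1})(q^{-d-1}\xi+abq^{2d-2i+2}+ab'q^{2i-2d-2})$. This is the only place where genuine cancellation occurs; everything else is direct substitution.
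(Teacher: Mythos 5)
Your proposal is correct and follows exactly the route the paper takes: the paper proves Lemma \ref{lem:case3subcase1} "in a similar way as" Lemma \ref{lem:case2subcase1}, whose proof reads the value of $z_i$ from Lemma \ref{lem:zi} and obtains $x_i$ and $y_i$ by substituting the case data (here $a'=aq^{2d+2}$, $z_1=-a^{-1}bq^{-2}$, and the closed form of $x_0$ from Lemma \ref{lem:case3x0}) into Lemmas \ref{lem:solxi} and \ref{lem:solyi}. Your intermediate computations ($x_0=-a^{-1}q^{-1}\xi+bq^{d}-bq^{3d}$, the cancellation of the $q^{3d-2i}$ terms in $x_i$, and the $\xi$-coefficient $q^{3d-2i+3}-q^{d+1}$ in the bracket for $y_i$) all check out.
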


\begin{lemma}   \label{lem:case3subcase2}   \samepage
\ifDRAFT {\rm lem:case3subcase2}. \fi
With reference to Lemma \ref{lem:case3subcases}, assume {\rm (ii)} holds.
Then
\begin{align*}
 x_i &= - a^{-1} q^{-2i-1} \xi + b' q^{2d-2i+1} (q^{d+1}+q^{-d-1} - q^{d-2i-1} - q^{d-2i+1})
                                    && (0 \leq i \leq d),
\\
 y_i &= q^{3d-2i+3} (q^i-q^{-i})(q^{d-i+1}-q^{i-d-1}) 
                ( q^{-d-1} \xi + a b q^{2i-2d-2} + a b' q^{2d-2i+2} )        && (1 \leq i \leq d),
\\
 z_i &= - a^{-1} b' q^{-2i}          && (1 \leq i \leq d).
\end{align*}
\end{lemma}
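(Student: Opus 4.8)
The plan is to read off all three families of entries by substituting the case hypothesis $a' = a q^{2d+2}$ and the subcase hypothesis $z_1 = -a^{-1} b' q^{-2}$ into the universal formulas already established earlier in this section, exactly as in the proof of Lemma \ref{lem:case3subcase1} but with the roles of $b$ and $b'$ interchanged. Lemmas \ref{lem:solxi} and \ref{lem:solyi} express $x_i$ and $y_i$ in terms of $x_0$, $z_1$, and $\xi$ for every Leonard pair of the form \eqref{eq:LBTD} with $\alpha = \alpha^* = 0$, and Lemma \ref{lem:case3x0} pins down $x_0$ under the case-$3$ hypothesis; so no fresh evaluation of the Askey-Wilson relations is needed and only algebraic simplification remains. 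Note that $a \neq 0$ here, since otherwise $a' = a q^{2d+2} = 0$ would force every $\th_i$ to equal $\alpha = 0$, contradicting the distinctness of the eigenvalues of $A$.

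First I would compute $x_0$ explicitly. Substituting $z_1 = -a^{-1} b' q^{-2}$ into Lemma \ref{lem:case3x0}, the term carrying $z_1^{-1}$ contributes $-b q^d$, which cancels the $b q^d$ coming from $(b+b')q^d$, leaving $x_0 = -a^{-1} q^{-1} \xi + b'(q^d - q^{3d})$; this is precisely the asserted formula for $x_i$ at $i = 0$. Feeding this value of $x_0$ together with $a' = a q^{2d+2}$ and $z_1 = -a^{-1} b' q^{-2}$ into Lemma \ref{lem:solxi} and collecting the $q$-powers then gives the stated closed form for $x_i$, $0 \le i \le d$. The subdiagonal is immediate: Lemma \ref{lem:zi} gives $z_i = z_1 q^{2-2i}$, so $z_i = -a^{-1} b' q^{-2i}$.

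The remaining and most delicate step is $y_i$. Here I would take the expression of Lemma \ref{lem:solyi}, substitute $a' = a q^{2d+2}$, $z_1 = -a^{-1} b' q^{-2}$, and the value of $x_0$ just found, and then reorganize. I expect the several monomials in $q$, $a$, $b$, $b'$, $\xi$ to reassemble into the single factored form $q^{3d-2i+3}(q^i - q^{-i})(q^{d-i+1} - q^{i-d-1})(q^{-d-1}\xi + a b q^{2i-2d-2} + a b' q^{2d-2i+2})$. This factorization is where I anticipate the main obstacle: one must recognize that the prefactor $(q^i - q^{-i})(q^{d-i+1} - q^{i-d-1})$ divides the whole expression and that, after the $z_1$- and $x_0$-substitutions force the right cancellations, the $\xi$-free remainder collapses exactly to $a b q^{2i-2d-2} + a b' q^{2d-2i+2}$. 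Since every ingredient parallels Lemma \ref{lem:case3subcase1} with $b \leftrightarrow b'$, once this bookkeeping is in place the verification is routine.
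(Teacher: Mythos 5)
Your proposal is correct and follows exactly the route the paper intends: the paper proves Lemma \ref{lem:case2subcase1} by combining Lemmas \ref{lem:solxi}, \ref{lem:solyi}, \ref{lem:case2x0} with Lemma \ref{lem:zi}, and then declares the case-3 lemmas (including this one) to be "similar," which is precisely your substitution of $a' = a q^{2d+2}$ and $z_1 = -a^{-1}b'q^{-2}$ into Lemmas \ref{lem:case3x0}, \ref{lem:solxi}, \ref{lem:solyi}, \ref{lem:zi}, with $b \leftrightarrow b'$ relative to Lemma \ref{lem:case3subcase1}. Your explicit check of $x_0$ agrees with the stated formula at $i=0$, so the remaining work is the routine simplification the paper also leaves to the reader.
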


\begin{lemma}   \label{lem:case3subcase3}   \samepage
\ifDRAFT {\rm lem:case3subcase3}. \fi
With reference to Lemma \ref{lem:case3subcases}, assume {\rm (iii)} holds.
Then
\begin{align*}
 x_i &= (b+b')q^{d-2i} - a q^{2d-2i+3} (q^{d+1}+q^{-d-1} - q^{d-2i-1} - q^{d-2i+1}) z_1
                                    && (0 \leq i \leq d),
\\
 y_i &=q^{d-1} (q^i-q^{-i})(q^{i-d-1}-q^{d-i+1}) 
                (b + a q^{2d-2i+4} z_1)(b' + a q^{2d-2i+4} z_1) z_1^{-1}        && (1 \leq i \leq d),
\\
 z_i &= q^{2-2i} z_1          && (1 \leq i \leq d).
\end{align*}
\end{lemma}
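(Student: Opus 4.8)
The plan is to follow the scheme used for Lemmas \ref{lem:case2subcase1}--\ref{lem:case2subcase3}: combine the general expressions of Lemmas \ref{lem:solxi}, \ref{lem:solyi}, and \ref{lem:zi} with the case-specific value of $x_0$ from Lemma \ref{lem:case3x0} and the subcase hypothesis (iii). Since $z_i = q^{2-2i} z_1$ is immediate from Lemma \ref{lem:zi}, the work reduces to determining $x_i$ and $y_i$.

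First I would put $x_0$ into closed form. Substituting the hypothesis (iii), namely $\xi = a^2 q^{d+3} z_1 + b b' q^{d-1} z_1^{-1}$, into the expression of Lemma \ref{lem:case3x0}, the contribution $-a^{-1} q^{-1}\xi = -a q^{d+2} z_1 - a^{-1} b b' q^{d-2} z_1^{-1}$ cancels the standing term $a^{-1} b b' q^{d-2} z_1^{-1}$, leaving
\[
 x_0 = (b+b') q^d + a(q^{3d+2} - q^{d+2}) z_1,
\]
which is precisely the asserted value of $x_i$ at $i = 0$. Substituting this $x_0$ together with $a' = a q^{2d+2}$ into Lemma \ref{lem:solxi} and collecting the powers of $q$ attached to $a z_1$ then yields the stated formula for $x_i$ throughout $0 \leq i \leq d$.

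The remaining and principal step is $y_i$, obtained by inserting $a' = a q^{2d+2}$, the closed form for $x_0$ above, and the value of $\xi$ from (iii) into Lemma \ref{lem:solyi}. The hard part is purely algebraic: the scalar produced there must be shown to simplify so that
\[
 y_i = q^{d-1}(q^i - q^{-i})(q^{i-d-1} - q^{d-i+1})(b + a q^{2d-2i+4} z_1)(b' + a q^{2d-2i+4} z_1) z_1^{-1}.
\]
I expect the constant term $(a+a')(b+b')$ and the term $-(q^{d+1}+q^{-d-1})\xi$ to supply the $b b'$ and the mixed $b + b'$ contributions, while the ${a'}^2$-term and the $x_0$-term supply the $z_1^2$ part; the crux is to recognize that these pieces assemble into a single expression that factors as a product over $b$ and $b'$.

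A useful organizing remark, which I would also use as a consistency check, is that the formulas asserted here are exactly those of Lemma \ref{lem:case1} specialized to $a' = a q^{2d+2}$. Indeed, under this specialization the value $\xi = a a' q^{1-d} z_1 + b b' q^{d-1} z_1^{-1}$ of \eqref{eq:solxi3} becomes precisely the hypothesis (iii), and $a' z_1 q^{2-2i}$ becomes $a q^{2d-2i+4} z_1$, so the $x_i$ and $y_i$ of Lemma \ref{lem:case1} turn into those claimed here. Thus the only genuinely new content is that these formulas persist on the boundary locus $a' = a q^{2d+2}$ excluded in Lemma \ref{lem:case1}; the direct derivation above confirms this without invoking the nonvanishing hypotheses $a \neq a' q^{2d+2}$ and $a' \neq a q^{2d+2}$ needed there.
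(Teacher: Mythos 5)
Your proposal is correct and follows essentially the same route as the paper: the paper proves this lemma by declaring it ``similar to'' Lemma \ref{lem:case2subcase1}, whose proof consists exactly of substituting the case-specific $x_0$ (here from Lemma \ref{lem:case3x0} combined with hypothesis (iii)) into Lemmas \ref{lem:solxi} and \ref{lem:solyi}, with $z_i$ coming from Lemma \ref{lem:zi}. Your closed form $x_0 = (b+b')q^d + a(q^{3d+2}-q^{d+2})z_1$ and the subsequent substitutions check out, and the observation that the result is the specialization of Lemma \ref{lem:case1} to $a'=aq^{2d+2}$ is a sound consistency check.
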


\section{Proof of Theorem \ref{thm:general}}
\label{sec:proofgeneral}

Let $A,A^*$ be a Leonard pair on $V$ with parameter array \eqref{eq:parray}.
Let $q$ be a quantum parameter of $A,A^*$,
and assume $q$ is not a root of unity.
Let $\alpha$, $\alpha^*$, $a$, $a'$, $b$, $b'$, $\xi$ be scalars that satisfy
\eqref{eq:thi}--\eqref{eq:phi}.

\begin{proposition}   \label{prop:ex1exists}   \samepage
\ifDRAFT {\rm prop:ex1exists}. \fi
With the above notation, 
assume at least two of $a a'$, $b b'$, $\xi$ are nonzero.
Then $A,A^*$ is isomorphic to 
the Leonard pair given in Proposition \ref{prop:ex1}
for some nonzero scalar $c$.
\end{proposition}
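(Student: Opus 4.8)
The plan is to construct, from the given data, a concrete Leonard pair of the form in Proposition \ref{prop:ex1} having the same parameter array \eqref{eq:parray} as $A,A^*$, and then to conclude by the uniqueness clause of Lemma \ref{lem:characterize}.

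The first and main step is to produce the scalar $c$. Since at least two of $aa'$, $bb'$, $\xi$ are nonzero, Lemma \ref{lem:closed2} furnishes a nonzero scalar $c$ with $\xi = -aa'c - bb'c^{-1}$ for which the split sequences of $A,A^*$ are given by \eqref{eq:vphi2} and \eqref{eq:phi2}. I would then note that these two formulas are literally \eqref{eq:ex1vphii} and \eqref{eq:ex1phii}, and that \eqref{eq:thi}, \eqref{eq:thsi} coincide with \eqref{eq:ex1thi}, \eqref{eq:ex1thsi}. Hence the parameter array \eqref{eq:parray} of $A,A^*$ is precisely the array that Proposition \ref{prop:ex1parray} attaches to the scalars $\alpha,\alpha^*,a,a',b,b',c$.

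Before invoking Proposition \ref{prop:ex1}, I would check that these scalars satisfy \eqref{eq:ex1cond1}--\eqref{eq:ex1cond3}, so that the matrices \eqref{eq:LBTD} really form a Leonard pair $B,B^*$. This is forced by \eqref{eq:parray} being a parameter array over $\F$: inequalities \eqref{eq:ex1cond1} and \eqref{eq:ex1cond2} are the distinctness of $\{\th_i\}_{i=0}^d$ and of $\{\th^*_i\}_{i=0}^d$ (Definition \ref{def:parray}(i)), while \eqref{eq:ex1cond3} is exactly the statement that the four linear factors appearing in \eqref{eq:ex1vphii}, \eqref{eq:ex1phii} do not vanish, which follows from $\vphi_i\neq0$, $\phi_i\neq0$ (Definition \ref{def:parray}(ii)) because $(q^i-q^{-i})$ and $(q^{d-i+1}-q^{i-d-1})$ are nonzero for $1\leq i\leq d$ as $q$ is not a root of unity.

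With \eqref{eq:ex1cond1}--\eqref{eq:ex1cond3} verified, Proposition \ref{prop:ex1} gives the LB-TD Leonard pair $B,B^*$ associated to $\alpha,\alpha^*,a,a',b,b',c$, and Proposition \ref{prop:ex1parray} identifies its parameter array with \eqref{eq:parray}. Since $A,A^*$ and $B,B^*$ then share a parameter array, Lemma \ref{lem:characterize} shows they are isomorphic Leonard pairs, as claimed. The only point requiring care is the bookkeeping in the previous paragraphs: one must use the same standard orderings of the eigenvalues, hence the same labels $a,a',b,b'$, in writing both arrays, so that no reversal $\th_i\mapsto\th_{d-i}$ intervenes; this is automatic here, since the single pair of sequences \eqref{eq:thi}, \eqref{eq:thsi} defines both arrays.
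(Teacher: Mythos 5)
Your proposal is correct and follows essentially the same route as the paper: obtain $c$ from Lemma \ref{lem:closed2}, verify \eqref{eq:ex1cond1}--\eqref{eq:ex1cond3} via Definition \ref{def:parray}(i),(ii), match parameter arrays through Proposition \ref{prop:ex1parray}, and conclude with the uniqueness clause of Lemma \ref{lem:characterize}. The only cosmetic difference is that the paper first normalizes $\alpha=\alpha^*=0$, which is inessential since the formulas in Proposition \ref{prop:ex1} carry $\alpha$ and $\alpha^*$ anyway.
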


\begin{proof}
Without loss of generality,
we assume $\alpha=0$ and $\alpha^*=0$.
By Lemma \ref{lem:closed2} there exists a nonzero scalar $c$
that satisfies \eqref{eq:vphi2} and \eqref{eq:phi2}.
Using conditions (i) and (ii) in Definition \ref{def:parray},
one checks that the scalars $a$, $a'$, $b$, $b'$, $c$ satisfy
the inequalities \eqref{eq:ex1cond1}--\eqref{eq:ex1cond3}.
By Proposition \ref{prop:ex1parray} the Leonard pair from Proposition \ref{prop:ex1}
has the same parameter array as $A,A^*$. 
By this and Lemma \ref{lem:characterize}
$A,A^*$ is isomorphic to the Leonard pair given in Proposition \ref{prop:ex1}.
\end{proof}

\begin{lemma}   \label{lem:onenonzero}  \samepage
\ifDRAFT {\rm lem:onenonzero}. \fi
With the above notation,
assume $A,A^*$ has LB-TD form.
Then at least one of $a a'$, $b b'$ is nonzero.
\end{lemma}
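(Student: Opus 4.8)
The plan is to argue by contradiction: assume $A,A^*$ has LB-TD form but $aa'=0$ and $bb'=0$, and derive a contradiction with the fact that the split sequences of a Leonard pair are nonzero. First I would reduce to the setting of Sections \ref{sec:useAWrel} and \ref{sec:entriesAs}. Since a translation changes neither $a,a',b,b'$ nor $\xi$ (by Note \ref{note:alpha0}, and because \eqref{eq:vphi}, \eqref{eq:phi} involve no $\alpha,\alpha^*$), I may assume $\alpha=\alpha^*=0$. Choosing a basis that realizes the LB-TD form \eqref{eq:LBTD} and, if necessary, replacing $q$ by $q^{-1}$ (which only interchanges $a\leftrightarrow a'$ and $b\leftrightarrow b'$, hence preserves both hypotheses $aa'=0$, $bb'=0$ and the desired conclusion), I place myself under the standing hypotheses of Section \ref{sec:entriesAs}.

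The crucial step is to pin down which case of Section \ref{sec:entriesAs} applies and to extract $\xi=0$. Because $\{\th_i\}$ are mutually distinct, $a$ and $a'$ cannot both vanish, so $aa'=0$ means exactly one of them is $0$; likewise exactly one of $b,b'$ is $0$. If $a=0$, then $a=a'q^{2d+2}$ would force $a'=0$ and $a'=aq^{2d+2}$ would also force $a'=0$, both impossible; the same reasoning applies when $a'=0$. Hence neither $a=a'q^{2d+2}$ nor $a'=aq^{2d+2}$ holds, i.e. we are exactly in the case covered by Lemma \ref{lem:x0xi}. That lemma gives $\xi=aa'q^{1-d}z_1+bb'q^{d-1}z_1^{-1}$, which vanishes since $aa'=bb'=0$.

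I would then substitute $\xi=0$ into the closed forms \eqref{eq:vphi}, \eqref{eq:phi}. The common prefactor $(q^i-q^{-i})(q^{i-d-1}-q^{d-i+1})$ is nonzero for $1\le i\le d$ because $q$ is not a root of unity, so everything hinges on the remaining factors $ab\,q^{2i-d-1}+a'b'\,q^{d-2i+1}$ for $\vphi_i$ and $a'b\,q^{2i-d-1}+ab'\,q^{d-2i+1}$ for $\phi_i$. A short inspection of the four sign patterns then finishes the argument: if the two vanishing parameters are both unprimed or both primed (i.e. $\{a,b\}$ or $\{a',b'\}$ are zero), then $a'b=ab'=0$ and every $\phi_i=0$; otherwise ($\{a,b'\}$ or $\{a',b\}$ zero) one has $ab=a'b'=0$ and every $\vphi_i=0$. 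In either case a whole split sequence vanishes, contradicting Lemma \ref{lem:split} (equivalently Definition \ref{def:parray}(ii)). This contradiction shows $aa'$ and $bb'$ cannot both be zero.

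The only genuinely delicate point is the second paragraph: the conclusion $\xi=0$ is what forces a split sequence to vanish, and this conclusion is \emph{not} a formal consequence of $aa'=bb'=0$ alone. Indeed Leonard pairs with $aa'=bb'=0$ but $\xi\neq 0$ exist, and for them $\phi_i=(\cdots)\,\xi\neq 0$, so no contradiction would arise. It is precisely the LB-TD hypothesis, funneled through the Askey-Wilson computation of Lemma \ref{lem:x0xi}, that supplies $\xi=0$. Thus the main work is simply to verify that $aa'=0$ really does place us in the case where Lemma \ref{lem:x0xi} is available, which is exactly the distinctness argument above.
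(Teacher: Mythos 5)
Your proposal is correct and follows essentially the same route as the paper's proof: reduce to the LB-TD matrix setting with $\alpha=\alpha^*=0$, observe that $aa'=0$ rules out $a=a'q^{2d+2}$ and $a'=aq^{2d+2}$ so that Lemma \ref{lem:x0xi} yields $\xi=0$, and then contradict the nonvanishing of the split sequences. The only (cosmetic) difference is that the paper inspects $\vphi_1$ and $\phi_1$ directly to deduce $b\neq 0$ and $b'\neq 0$ when $a=0$ (or $a'=0$), whereas you run the four sign-pattern cases to show an entire split sequence vanishes; the two arguments are equivalent.
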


\begin{proof}
Without loss of generality,
we may assume $A,A^*$ is an LB-TD pair in $\Mat_{d+1}(\F)$.
In view of Lemma \ref{lem:standard},
we may assume that $A$ is the matrix from \eqref{eq:LBTD} after replacing
$\{\th_i\}_{i=0}^d$ with $\{\th_{d-i}\}_{i=0}^d$ if necessary.
Write $A^*$ as in \eqref{eq:LBTD}, and note that $y_i z_i \neq 0$ for $1 \leq i \leq d$.
In view of Note \ref{note:alpha0} we may assume $\alpha=0$ and $\alpha^* = 0$.
We show that at least one of $a a'$, $b b'$ is nonzero.
By way of contradiction, we assume $aa'=0$ and $bb' = 0$.
Note that  $a \neq a' q^{2d+2}$ and $a' \neq a q^{2d+2}$; otherwise both $a=0$ and $a'=0$ by $aa'=0$.
Let $\{\vphi_i\}_{i=1}^d$ (resp.\ $\{\phi_i\}_{i=1}^d$) be the first split sequence
(resp.\ second split sequence) of $A,A^*$ associated with the ordering
$(\{\th_i\}_{i=0}^d, \{\th^*_i\}_{i=0}^d)$.
Let $\xi$ be a scalar that satisfies \eqref{eq:vphi} and \eqref{eq:phi}.
By \eqref{eq:solxi3} $\xi = 0$.
By this and \eqref{eq:vphi}, \eqref{eq:phi}
\begin{align*}
 \vphi_1 &= (q-q^{-1})(q^{-d}-q^d)(a b q^{1-d} + a' b' q^{d-1}), 
\\
 \phi_1 &=  (q-q^{-1})(q^{-d}-q^d)(a' b q^{1-d} + a b' q^{d-1}). 
\end{align*}
Note that  $\vphi_1 \neq 0$ and $\phi_1 \neq 0$ by Lemma \ref{lem:characterize}(ii).
So 
\begin{align*} 
 a b q^{1-d} + a' b' q^{d-1} & \neq 0,
&
 a' b q^{1-d} + a b' q^{d-1} & \neq 0.
\end{align*}
Therefore, if $a=0$ then both $b' \neq 0$ and $b \neq 0$,
and if $a'=0$ then both $b \neq 0$ and $b' \neq 0$.
This contradicts $bb'=0$. The result follows.
\end{proof}

\begin{lemma}   \label{lem:det}   \samepage
\ifDRAFT {\rm lem:det}. \fi
With the above notation, assume $A,A^*$ has LB-TD form.
Assume either $a=a' q^{2d+2}$ or $a' = a q^{2d+2}$.
Then at least one of $bb'$, $\xi$ is nonzero.
\end{lemma}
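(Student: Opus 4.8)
The plan is to reduce to the explicit matrices of Section~\ref{sec:entriesAs}, run the case split of Lemma~\ref{lem:case2subcases}, and then supplement the Askey--Wilson relations with the one piece of data they do not see: the spectrum (equivalently the trace) of $A^*$.

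First I would normalize. Translations preserve LB-TD form and leave $a,a',b,b',\xi$ (hence $aa'$, $bb'$, $\xi$) unchanged, so by Note~\ref{note:alpha0} I may assume $\alpha=\alpha^*=0$. Since $A,A^*$ has LB-TD form I may further take, after choosing a suitable basis and invoking Lemma~\ref{lem:standard}, that $A,A^*$ is literally the pair \eqref{eq:LBTD} with a standard ordering $\{\th_i\}_{i=0}^d$ on the diagonal of $A$, a standard ordering $\{\th^*_i\}_{i=0}^d$ for $A^*$, and $y_iz_i\neq0$ for all $i$. The hypothesis gives $a=a'q^{2d+2}$ or $a'=aq^{2d+2}$; the two are interchanged by the reversal $\th_i\mapsto\th_{d-i}$, which swaps $a\leftrightarrow a'$ while fixing $bb'$ and $\xi$, and Lemmas \ref{lem:case2x0}--\ref{lem:case2subcase3} and \ref{lem:case3x0}--\ref{lem:case3subcase3} are symmetric under this. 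So I may assume $a=a'q^{2d+2}$ and use the case-2 lemmas. As noted before Lemma~\ref{lem:case2x0}, this degeneracy already forces $a\neq0$ and $a'\neq0$, so $aa'\neq0$ (no appeal to Lemma~\ref{lem:onenonzero} is needed).

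Now suppose for contradiction that $bb'=0$ and $\xi=0$, and apply Lemma~\ref{lem:case2subcases}. Case (iii) is immediate: there $\xi=a^2q^{-3d-1}z_1+bb'q^{d-1}z_1^{-1}$, so $\xi=0$ and $bb'=0$ give $a^2q^{-3d-1}z_1=0$, contradicting $a\neq0$ and $z_1\neq0$. In case (i) we have $z_1=-a^{-1}bq^{2d}$, and $z_1\neq0$ forces $b\neq0$, whence $b'=0$; case (ii) is symmetric with $b,b'$ exchanged. So it remains to rule out case (i) with $b\neq0$, $b'=0$, $\xi=0$. Here the corner entries of \eqref{eq:AW1} give nothing: under $a=a'q^{2d+2}$ one has $\th_{-1}=\th_0$, so the coefficient of $y_1$ in \eqref{eq:y1} vanishes and that relation collapses to $0=0$, while \eqref{eq:yd} is likewise satisfied. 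The constraint not yet used is that $A^*$ must have eigenvalues $\{\th^*_i\}_{i=0}^d$, so in particular $\sum_{i=0}^d x_i=\tr A^*=\sum_{i=0}^d\th^*_i$. Substituting the $x_i$ from Lemma~\ref{lem:case2subcase1} with $\xi=0$, only the $b$-part of each $x_i$ survives, and a short geometric-series identity gives $\sum_{i=0}^d x_i=0$; on the other hand $b'=0$ gives $\sum_{i=0}^d\th^*_i=b\sum_{i=0}^d q^{2i-d}$, which is nonzero because $b\neq0$ and $q$ is not a root of unity. This contradiction eliminates case (i); case (ii) is handled identically. Hence at least one of $bb'$, $\xi$ is nonzero.

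I expect the trace identity to be the crux. The Askey--Wilson relations that drove Section~\ref{sec:entriesAs} remain fully consistent with $bb'=0=\xi$ in cases (i)--(ii) (indeed the forced value of $\xi$ there is $-ab\,q^{-d-1}\neq0$, not $0$), so the obstruction surfaces only after the spectrum of $A^*$ is imposed. The single genuine calculation is therefore the verification that $\sum_i x_i=0$ when $\xi=0$ and $b'=0$, which must be contrasted with $\sum_i\th^*_i\neq0$; everything else is bookkeeping and the symmetry reductions above.
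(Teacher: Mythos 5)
Your proof is correct, and its key step differs from the paper's in an interesting way. Both arguments share the same skeleton: normalize $\alpha=\alpha^*=0$, note $aa'\neq 0$ in the degenerate case, run the trichotomy of Lemma \ref{lem:case2subcases}, dispose of case (iii) at once (your observation that $\xi=bb'=0$ would force $a^2q^{-3d-1}z_1=0$ is exactly the paper's), and reduce cases (i) and (ii) to the subcase $b\neq 0$, $b'=0$, $\xi=0$. Where you diverge is the final contradiction: the paper shows that the leading principal minors of $A^*$ satisfy $\det M_k=b^{k+1}q^{-(k+1)(d+k+2)}\prod_{\ell=0}^k(1-q^{2d-2\ell})$ by induction, so $\det A^*=0$, contradicting $\det A^*=\prod_i\th^*_i=b^{d+1}\neq 0$; you instead compare traces, showing $\sum_i x_i=0$ by a geometric-series computation while $\tr A^*=\sum_i\th^*_i=b\,(q^{d+1}-q^{-d-1})/(q-q^{-1})\neq 0$ since $q$ is not a root of unity. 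I checked your sum: with $\xi=0$, $b'=0$ one gets $\sum_i x_i/b=q^dS+q^{-d-2}S-(q^d+q^{-d-2})S=0$ with $S=(1-q^{-2d-2})/(1-q^{-2})$, so the trace argument is sound and is genuinely lighter than the determinant induction (a single telescoping identity versus a minor recursion). One caveat: your reduction of the case $a'=aq^{2d+2}$ to $a=a'q^{2d+2}$ by ``reversal'' is not quite legitimate as stated, because reversing the standard ordering $\{\th_i\}\mapsto\{\th_{d-i}\}$ reorders the diagonal of the concrete matrix $A$ and takes it out of LB-TD form (it becomes UB-TD after conjugating by the flip). This is harmless here -- the paper likewise only says the second case is ``similar,'' and your trace computation goes through verbatim on Lemmas \ref{lem:case3subcase1}--\ref{lem:case3subcase3} (the analogous sum again telescopes to zero) -- but you should either say that or justify the symmetry more carefully rather than appeal to the reversal.
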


\begin{proof}
Without loss of generality,
we may assume $A,A^*$ is an LB-TD pair in $\Mat_{d+1}(\F)$.
In view of Lemma \ref{lem:standard},
we may assume that $A$ is the matrix from \eqref{eq:LBTD} after replacing
$\{\th_i\}_{i=0}^d$ with $\{\th_{d-i}\}_{i=0}^d$ if necessary.
Write $A^*$ as in \eqref{eq:LBTD}, and note that $y_i z_i \neq 0$ for $1 \leq i \leq d$.
In view of Note \ref{note:alpha0} we may assume $\alpha=0$ and $\alpha^* = 0$.
First consider the case $a = a' q^{2d+2}$. 
Note that $a a' \neq 0$; otherwise $\th_0 = \th_1$.
By Lemma \ref{lem:case2subcases} at least one of {\rm (i)--(iii)} holds.
First assume (iii) holds.
Then at least one of $bb'$, $\xi$ is nonzero; otherwise $a z_1 =0$.
Next assume (i) holds.
By way of contradiction, assume both $b b' = 0$ and $\xi=0$.
We must have $b' =0$ since $b \neq 0$ by  $0 \neq z_1 = -a^{-1} b q^{2d}$.
By these comments and Lemma \ref{lem:case2subcase1},
\begin{align*}
 x_i &= b q^{-2i-1} (q^{d+1} + q^{-d-1} - q^{d-2i-1} - q^{d-2i+1})  && (0 \leq i \leq d),
\\
 y_i &= a b q^{-d-4i-1} (q^i - q^{-i})(q^{d-i+1} - q^{i-d-1})   && (1 \leq i \leq d),
\\
 z_i &= - a^{-1} b q^{2d-2i+2}  && (1 \leq i \leq d).
\end{align*}
We claim that $\det A^* = 0$.
To see the claim, for $0 \leq k \leq d$ we define $(k+1) \times (k+1)$ matrix $M_k$
that consists of the rows $0,1,\ldots,k$ and columns $0,1,\ldots,k$ of $A^*$.
So
\begin{align*}
M_0 &= \begin{pmatrix}
              x_0
          \end{pmatrix},  &
M_1 &= \begin{pmatrix}
              x_0 & y_1 \\
              z_1 & x_1
          \end{pmatrix}, &
M_2 &= \begin{pmatrix}
             x_0 & y_1 & 0 \\
             z_1 & x_1 & y_2 \\
             0   & z_2 & x_2
           \end{pmatrix}, &
      & \ldots &
M_d = A^*.
\end{align*}
Using induction on $k=0,1,\ldots,d$ one routinely finds that
\begin{align*}
 \det M_k &= b^{k+1} q^{-(k+1)(d+k+2)} \prod_{\ell=0}^k (1 - q^{2d-2\ell})
           && (0 \leq k \leq d).
\end{align*}
Thus $\det M_d=0$ and the claim is proved.
By elementary linear algebra, $\det A^* = \th^*_0 \th^*_1 \cdots \th^*_d$.
So $\det A^* \neq 0$ since $\th^*_i = b q^{2i-d}$ for $0 \leq i \leq d$.
This contradicts the claim.
We have shown that at least one of $\xi$, $bb'$ is nonzero for the case of (i).
Next assume (ii) holds in Lemma \ref{lem:case2subcases}.
We can show the assertion in a similar way as above.
We have shown the assertion for the case of $a = a' q^{2d+2}$.
The proof is similar for the case of $a' = a q^{2d+2}$.
\end{proof}

\begin{proofof}{Theorem \ref{thm:general}}
(i)$\Rightarrow$(ii):
By Lemma \ref{lem:onenonzero} at least one of $a a'$, $b b'$ is nonzero.
First  assume $a \neq a' q^{2d+2}$ and $a' \neq a q^{2d+2}$.
By Lemma \ref{lem:x0xi} $\xi = a a' q^{1-d} z_1 + b b' q^{d-1} z_1^{-1}$.
If one of $a a'$, $b b'$ is zero, then $\xi \neq 0$.
Thus at least two of $a a'$, $b b'$, $\xi$ are nonzero.
Next assume $a = a' q^{2d+2}$ or $a' = a q^{2d+2}$.
In this case $a a' \neq 0$; otherwise $\th_0 = \th_1$.
Moreover, at least one of $b b'$, $\xi$ is nonzero  by Lemma \ref{lem:det}.
Thus at least two of $a a'$, $b b'$, $\xi$ are nonzero.

(ii)$\Rightarrow$(i):
Follows from Proposition \ref{prop:ex1exists}.
\end{proofof}

\section{Proof of Theorem \ref{thm:main}}
\label{sec:proofmain}

\begin{proofof}{Theorem \ref{thm:main}}
Consider sequences of scalars
$\{\th_i\}_{i=0}^d$, $\{x_i\}_{i=0}^d$, $\{y_i\}_{i=1}^d$, $\{z_i\}_{i=1}^d$ such that
$y_i z_i \neq 0$ for $1 \leq i \leq d$,
and consider the matrices $A,A^*$ from \eqref{eq:LBTD}.
Assume $A,A^*$ is an LB-TD Leonard pair in $\Mat_{d+1}(\F)$ with quantum parameter $q$
that is not a root of unity.
By Lemma \ref{lem:standard} $\{\th_i\}_{i=0}^d$ is a standard ordering of the eigenvalues of $A$.
Let $\{\th^*_i\}_{i=0}^d$ be a standard ordering of the eigenvalues of $A^*$.
Let $\beta$ (resp.\ $q$) be the fundamental parameter (resp.\ quantum parameter) of $A,A^*$,
and assume $q$ is not a root of unity.
Let $\alpha$, $a$, $a' $ (resp.\ $\alpha^*$, $b$, $b'$) be scalars that satisfy 
\eqref{eq:thi} (resp.\ \eqref{eq:thsi}).
We may assume $\alpha=0$ and $\alpha^* = 0$.
Let $\{\vphi_i\}_{i=1}^d$ (resp.\ $\{\phi_i\}_{i=1}^d$) be the first split sequence
(resp.\ second split sequence) of $A,A^*$
associated with the ordering $(\{\th_i\}_{i=0}^d, \{\th^*_i\}_{i=0}^d)$.
Let $\xi$ be a scalar that satisfies \eqref{eq:vphi} and \eqref{eq:phi}.
Note that at least two of $a a'$, $b b'$, $\xi$ are nonzero
by Theorem \ref{thm:general}, and so Lemma \ref{lem:closed2} applies.

First consider the case that $a \neq a' q^{2d+2}$ and $a' \neq a q^{2d+2}$.
By Lemma \ref{lem:case1} the scalars $\{x_i\}_{i=0}^d$, $\{y_i\}_{i=1}^d$, $\{z_i\}_{i=1}^d$
are written as in Lemma \ref{lem:case1}.
Setting  $z_1 = - c q^{d-1}$ in these expressions,
we obtain \eqref{eq:ex1xi}--\eqref{eq:ex1zi}.

Next consider the case $a = a' q^{2d+2}$.
By Lemma \ref{lem:case2subcases} at least one of (i)--(iii) holds in that lemma.
First assume (i) holds.
By Lemma \ref{lem:case2subcase1} the scalars  $\{x_i\}_{i=0}^d$, $\{y_i\}_{i=1}^d$, $\{z_i\}_{i=1}^d$
are written as in Lemma \ref{lem:case2subcase1}.
Setting $\xi = - a a' c - b b' c^{-1}$ in these expressions,
and replacing $(b,b',c)$ with
$(a c q^{-d-1},\, a^{-1} b b' c^{-1} q^{d+1}, \, a^{-1}b q^{d+1})$,
we obtain \eqref{eq:ex1xi}--\eqref{eq:ex1zi}.
Next assume (ii) holds.
Observe the expressions in Lemma \ref{lem:case2subcase2} are obtained
from the expressions in Lemma \ref{lem:case2subcase1}
by exchanging $b$ and $b'$.
Now proceed as above after exchanging $b$ and $b'$.
Next assume (iii) holds.
By Lemma \ref{lem:case2subcase3}  the scalars  $\{x_i\}_{i=0}^d$, $\{y_i\}_{i=1}^d$, $\{z_i\}_{i=1}^d$
are written as in Lemma \ref{lem:case2subcase3}.
Setting $z_1=- c q^{d-1}$ in these expresssions,
we obtain \eqref{eq:ex1xi}--\eqref{eq:ex1zi}.

Next consider the case $a' = a q^{2d+2}$.
By Lemma \ref{lem:case3subcases} at least one of (i)--(iii) holds in that lemma.
First assume Lemma \ref{lem:case3subcases}(i) holds.
By Lemma \ref{lem:case3subcase1} the scalars  $\{x_i\}_{i=0}^d$, $\{y_i\}_{i=1}^d$, $\{z_i\}_{i=1}^d$
are written as in Lemma \ref{lem:case3subcase1}.
Setting $\xi = - a a' c - b b' c^{-1}$ in these expressions,
and replacing $(b,b',c)$ with
$(a c q^{d+1},\, a^{-1} b b' c^{-1} q^{-d-1}, \, a^{-1} b q^{-d-1})$,
we obtain \eqref{eq:ex1xi}--\eqref{eq:ex1zi}.
Next assume Lemma \ref{lem:case3subcases}(ii) holds.
Observe the expressions in Lemma \ref{lem:case3subcase2} are obtained
from the expressions in Lemma \ref{lem:case3subcase1}.
Now proceed as above after exchanging $b$ and $b'$.
Next assume Lemma \ref{lem:case3subcases}(iii) holds.
By Lemma \ref{lem:case3subcase3}  the scalars  $\{x_i\}_{i=0}^d$, $\{y_i\}_{i=1}^d$, $\{z_i\}_{i=1}^d$
are written as in Lemma \ref{lem:case3subcase3}.
Setting $z_1=- c q^{d-1}$ in these expresssions,
we obtain \eqref{eq:ex1xi}--\eqref{eq:ex1zi}.

We have shown that there exist scalars $\alpha$, $\alpha^*$, $a$, $a'$, $b$, $b'$, $c$ with $c \neq 0$
that satisfy \eqref{eq:ex1xi}--\eqref{eq:ex1zi}.
By the construction, \eqref{eq:ex1thi} holds.
By Proposition \ref{prop:ex1} these scalars satisfy the inequalities 
\eqref{eq:ex1cond1}--\eqref{eq:ex1cond3}.
\end{proofof}

\section{Types of Leonard pairs}
\label{sec:types}

Let $A,A^*$ be a Leonard pair on $V$ with quantum parameter
$q$ that is not a root of unity.
Let 
\[
  (\{\th_i\}_{i=0}^d, \{\th^*_i\}_{i=0}^d, \{\vphi_i\}_{i=1}^d, \{\phi_i\}_{i=1}^d)
\]
be a parameter array of $A,A^*$.
By \cite[Lemma 9.2]{T:Leonard} there exist scalars $\alpha$, $\alpha^*$,
$a$, $a'$, $b$, $b'$ such that
\begin{align*}
 \th_i &= \alpha + a q^{2i-d} + a' q^{d-2i}  && (0 \leq i \leq d),
\\
 \th^*_i &= \alpha^* + b q^{2i-d} + b' q^{d-2i} && (0 \leq i \leq d).
\end{align*}
By \cite[Lemma 13.1]{NT:affine} there exists a scalar $\xi$ such that
\begin{align*}
\vphi_i &= (q^i-q^{-i})(q^{i-d-1}-q^{d-i+1})(\xi + a b q^{2i-d-1} + a' b' q^{d-2i+1})
                                                           && (1 \leq i \leq d), 
\\
\phi_i &= (q^i-q^{-i})(q^{i-d-1}-q^{d-i+1})(\xi + a' b q^{2i-d-1} + a b' q^{d-2i+1})
                                                           && (1 \leq i \leq d).
\end{align*}
By \cite[Section 35]{T:survey},
according to the values of $a$, $a'$, $b$, $b'$, $\xi$,
the Leonard pair $A,A^*$ has one of the following types:
\[
\begin{array}{rrrrr|c}
    a \;\;  & \qquad a' \;  & \qquad b \;\; & \qquad b' \; & \qquad \xi \;\;
  & \text{Name}
\\ \hline\hline \rule{0mm}{4.5mm}
   \neq 0  & \neq 0 & \neq 0 & \neq 0 & \text{any} & \text{$q$-Racah}
\\ \hline  \rule{0mm}{4.5mm}
  0 & \neq 0 & \neq 0 & \neq 0 &  \neq 0   & \text{$q$-Hahn}  
 \\
  \neq 0 & 0 & \neq 0 & \neq 0 & \neq 0 
\\ \hline  \rule{0mm}{4.5mm}
  \neq 0 & \neq 0 & 0 & \neq 0 & \neq 0   & \text{dual $q$-Hahn}
  \\
  \neq 0 & \neq 0 & \neq 0 & 0 & \neq 0
\\ \hline  \rule{0mm}{4.5mm}
  \neq 0 & 0 & 0 & \neq 0 & \neq 0   & \;\;  \text{quantum $q$-Krawtchouk}
  \\
  0 & \neq 0 & \neq 0 & 0 & \neq 0
\\ \hline \rule{0mm}{4.5mm}
  0 & \neq0 & \neq 0 & \neq 0 & 0 & \text{$q$-Krawtchouk}
  \\
  \neq 0 & 0 & \neq 0 & \neq 0 & 0
\\ \hline \rule{0mm}{4.5mm}
  0 & \neq 0 & 0 & \neq 0 & \neq 0 & \text{affine $q$-Krawtchouk}
 \\
 \neq 0 & 0 & \neq 0 & 0 & \neq 0
\\ \hline \rule{0mm}{4.5mm}
 \neq 0 & \neq 0 & 0 & \neq 0 & 0 & \text{dual $q$-Krawtchouk}
 \\
 \neq 0 & \neq 0 & \neq 0 & 0 & 0
\end{array}
\]
By Theorem \ref{thm:general} $A,A^*$ has LB-TD form if and only if at least two of
$aa'$, $bb'$, $\xi$ are nonzero.
Therefore we obtain:

\begin{corollary}  \samepage
Let $A,A^*$ be  a Leonard pair on $V$ with quantum parameter
$q$ that is not a root of unity.
Then the following {\rm (i)} and {\rm (ii)} are equivalent:
\begin{itemize}
\item[\rm (i)]
$A,A^*$ has LB-TD form.
\item[\rm (ii)]
$A,A^*$ has one of the types:
$q$-Racah, $q$-Hahn, dual $q$-Hahn.
\end{itemize}
\end{corollary}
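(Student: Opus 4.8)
The plan is to deduce the corollary directly from Theorem~\ref{thm:general}, reducing everything to a finite inspection of the classification table. Theorem~\ref{thm:general} states that $A,A^*$ has LB-TD form if and only if at least two of $aa'$, $bb'$, $\xi$ are nonzero. Since the seven types in the table are defined by the zero/nonzero pattern of $(a,a',b,b',\xi)$, the entire task is to determine, for each type, how many of the three quantities $aa'$, $bb'$, $\xi$ are nonzero and to match the outcome against the dichotomy in Theorem~\ref{thm:general}.

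First I would record the elementary reformulation that $aa' \neq 0$ holds exactly when both $a \neq 0$ and $a' \neq 0$, and similarly $bb' \neq 0$ holds exactly when both $b \neq 0$ and $b' \neq 0$. This turns the criterion of Theorem~\ref{thm:general} into a count that can be read off immediately from each row of the table, with no further computation.

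Next I would tabulate this count for all seven types. For $q$-Racah all of $a,a',b,b'$ are nonzero, so $aa' \neq 0$ and $bb' \neq 0$, and the criterion holds irrespective of $\xi$. For $q$-Hahn one has $bb' \neq 0$ and $\xi \neq 0$ while $aa' = 0$, and for dual $q$-Hahn one has $aa' \neq 0$ and $\xi \neq 0$ while $bb' = 0$; in both cases exactly two of the three quantities are nonzero, so the criterion holds. For each of the remaining four types---quantum $q$-Krawtchouk, $q$-Krawtchouk, affine $q$-Krawtchouk, dual $q$-Krawtchouk---a glance at the table shows that exactly one of $aa'$, $bb'$, $\xi$ is nonzero, so the criterion of Theorem~\ref{thm:general} fails. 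Hence the criterion holds precisely for $q$-Racah, $q$-Hahn, and dual $q$-Hahn, which is the asserted equivalence of (i) and (ii).

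There is no genuine obstacle here: the substantive content lies entirely in Theorem~\ref{thm:general}, and the corollary is pure bookkeeping over a finite table. The only point worth a remark is that each type occupies two rows, related by interchanging $a$ with $a'$ and/or $b$ with $b'$. Because these interchanges leave the products $aa'$ and $bb'$ unchanged and do not alter whether $\xi$ vanishes, both rows of a given type yield the same count; thus checking one representative row per type suffices, and the tabulation is consistent.
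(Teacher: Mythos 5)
Your proposal is correct and is essentially the paper's own argument: the paper likewise presents the classification table and then derives the corollary by applying Theorem \ref{thm:general} and reading off, for each type, whether at least two of $aa'$, $bb'$, $\xi$ are nonzero. Your explicit tabulation (including the remark that the two rows of each type give the same count) matches the table and fills in the bookkeeping the paper leaves implicit.
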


\section{Acknowledgments}

The author thanks the referee for many insightful comments
that lead to great improvements in the paper.

\bigskip

{

\small

}

\bigskip\bigskip\noindent
Kazumasa Nomura\\
Professor Emeritus\\
Tokyo Medical and Dental University\\
Kohnodai, Ichikawa, 272-0827 Japan\\
email: knomura@pop11.odn.ne.jp

\medskip\noindent
{\small
{\bf Keywords.} Leonard pair, tridiagonal pair, Askey-Wilson relation, orthogonal polynomial
\\
\noindent
{\bf 2010 Mathematics Subject Classification.} 05E35, 05E30, 33C45, 33D45
}


\begin{thebibliography}{10}

\bibitem{Cur}
B. Curtin,
Modular Leonard triples,
Linear algebra Appl. 424 (2007) 510-539. 

\bibitem{Har}
B. Hartwig.
Three mutually adjacent Leonard pairs,
Linear Algebra Appl. 408 (2005) 19--39;
{\tt arXiv:math.AC/0508415}.

\bibitem{Hu}
H. Huang,
The classification of Leonard triples of QRacah type,
Linear Algebra Appl.\ 439 (2013) 1834--1861;
{\tt arXiv:1108.0548}.

\bibitem{IRT}
T. Ito, H. Rosengren, P. Terwilliger,
Evaluation modules for the $q$-tetrahedron algebra,
Linear Algebra App.\ 451 (2014) 107--166;
{\tt arXiv:1308.3480}.


\bibitem{NT:span}
K. Nomura and P. Terwilliger.
Linear transformations that are tridiagonal with respect to
both eigenbases of a Leonard pair,
Linear Algebra Appl. 420  (2007) 198--207;
{\tt arXiv:math.RA/0605316}.

\bibitem{NT:affine}
K. Nomura, P. Terwilliger,
Affine transformations of a Leonard pair,
Electron. J. of Linear Algebra 16 (2007) 389-418;
{\tt arXiv:math/0611783}.

\bibitem{T:Leonard}
P. Terwilliger,
Two linear transformations each tridiagonal with respect to an
eigenbasis of the other,
Linear Algebra Appl. 330 (2001) 149--203;
{\tt arXiv:math/0406555}.

\bibitem{T:tworelations}
P. Terwilliger,
Two relations that generalize the $q$-Serre relations and the
Dolan-Grady relations,
in: Physics and Combinatorics 1999 (Nagoya) 377-398.
World Scientific Publishing, River Edge, NJ, 2001;
{\tt arXiv:math/0307016}.

\bibitem{T:LBUB}
P. Terwilliger,
Two linear transformations each tridiagonal with respect to an
eigenbasis of the other;
the $TD$-$D$ canonical form and the $LB$-$UB$ canonical form,
J. Algebra 291 (2005) 1--45;
{\tt arXiv:math/0304077}.

\bibitem{T:survey}
P. Terwilliger,
An algebraic approach to the Askey scheme of orthogonal polynomials,
Orthogonal polynomials and special functions, 
Lecture Notes in Math., 1883, 
Springer, Berlin, 2006, pp. 255--330;
{\tt arXiv:math/0408390}. 

\bibitem{TV}
P. Terwilliger, R.~Vidunas,
Leonard pairs and the Askey-Wilson relations,
J. Algebra Appl. 3 (2004) 411--426;
{\tt arXiv:math/0305356}.

\end{thebibliography}
\end{document}